\theoremstyle{plain}
\newtheorem{theorem}{Theorem}[section]
\newtheorem{lemma}[theorem]{Lemma}
\newtheorem{corollary}[theorem]{Corollary}
\newtheorem{conjecture}[theorem]{Conjecture}
\newtheorem{proposition}[theorem]{Proposition}
\theoremstyle{definition}
\newtheorem{definition}[theorem]{Definition}
\theoremstyle{remark}
\newtheorem{remark}[theorem]{Remark}
\newtheorem{example}[theorem]{Example}
\DeclareSymbolFont{AMSb}{U}{msb}{m}{n}
\DeclareMathSymbol{\N}{\mathalpha}{AMSb}{"4E}
\DeclareMathSymbol{\R}{\mathalpha}{AMSb}{"52}
\DeclareMathSymbol{\Z}{\mathalpha}{AMSb}{"5A}
\DeclareMathSymbol{\D}{\mathalpha}{AMSb}{"44}
\DeclareMathSymbol{\s}{\mathalpha}{AMSb}{"53}
\DeclareMathOperator{\md}{md}
\renewcommand{\r}{{\footnotesize \ensuremath{\mathbb{R}}}}
\DeclareMathOperator{\supp}{supp}
\DeclareMathOperator{\m}{m}
\DeclareMathOperator{\diam}{diam}
\newcommand{\T}{\mathcal{T}}
\DeclareMathOperator{\OptGeo}{OptGeo}
\def\co{\colon\thinspace}
\def\eps{\epsilon}
\title{On gluing Alexandrov spaces with lower Ricci curvature bounds}
\author{Vitali Kapovitch}
\address{University of Toronto}
\email{vkt@math.toronto.edu}
\author{Christian Ketterer}
\address{University of Toronto}
\thanks{
CK is funded by the Deutsche Forschungsgemeinschaft (DFG, German Research Foundation) -- Projektnummer 396662902. KTS gratefully acknowledges financial support by the European Union through the ERC-AdG ``RicciBounds''
and by the DFG through the Excellence Cluster ``Hausdorff Center for Mathematics'' and through the Collaborative Research Center 1060.}
\thanks{{\it 2010 Mathmatics Subject Classification.} Primary 53C21, 54E35. Keywords: metric measure space, curvature-dimension condition, gluing construction.}
\email{ckettere@math.toronto.edu.}
\author{Karl-Theodor Sturm}
\address{University of Bonn}
\email{sturm@iam.uni-bonn.de}
\begin{document}
\begin{abstract}
In this paper we prove that in the class of metric measure space with Alexandrov curvature bounded from below the Riemannian curvature-dimension condition $RCD(K,N)$ with $K\in \R$ \& $N\in [1,\infty)$ is preserved under doubling and gluing constructions.
\end{abstract}
\maketitle
\tableofcontents
\section{Introduction and Statement of Main Results}
A way to construct Alexandrov spaces is by gluing together two or more given Alexandrov spaces along isometric connected components of their
intrinsic boundaries. The isometry between the boundaries is understood w.r.t. induced length metric. A special case of this construction is the double space where one glues together two copies of the same Alexandrov space with nonempty boundary. 
It was shown by Perelman that the double of an  Alexandrov space of curvature  $\ge k$ is again Alexandrov of curvature  $\ge k$. Petrunin later showed  \cite{pg}  that the lower curvature bound is preserved in general for any gluing of two possibly different Alexandrov spaces.

In this article we study Ricci curvature bounds in the sense of Lott, Sturm and Villani for this setup. More precisely, we consider the class of $n$-dimensional Alexandrov spaces with some lower curvature bound equipped with a Borel measure of the form $\Phi \mathcal H^n=\m$ for a semi-concave function $\Phi:X\rightarrow [0,\infty)$ such that the corresponding metric measure space $(X,d,\m)$ satisfies a curvature-dimension condition $CD(K,N)$ for $K\in \R$ and $N\in [n,\infty)$. Here $K$ does not necessarily coincide with $k(n-1)$.  In particular, it's possible that $k<0$ but $K\ge 0$.

To state our main theorem we recall the following. The Alexandrov boundary of $(X,d)$ is denoted as $\partial X$ equipped with the induced length metric $d_{\partial X}$. We write $\Sigma_p$ for the space of direction at $p\in X$ that is an Alexandrov space with curvature bounded below by $1$. 
We say
$v\in \Sigma_p$ for $p\in \partial X$ is a normal vector at $p$ if $\angle(v,w)=\frac{\pi}{2}$ for any $w\in \partial \Sigma_p$.
Here $d_x\Phi_i$ denotes the differential of the semi-concave function $\Phi$ at some point $x\in X_i$. We also refer to the remarks after Definition \ref{def:semiconcavefct}.

Our main theorem is 
\begin{theorem}[Glued spaces]\label{main} 
For $i=0,1$ let $X_i$ be $n$-dimensional Alexandrov spaces with curvature bounded below and let $\m_{X_i}=\Phi_i \mathcal H^n_{X_i}$ be measures where $\Phi_i:X_i\rightarrow [0,\infty)$ are  semi-concave functions. Suppose there exists an isometry $\mathcal I:\partial X_0\rightarrow \partial X_1$ such that $\Phi_0=\Phi_1\circ \mathcal I$. 

If the metric measure spaces $(X_i, d_{X_i}, \m_i)$ satisfy the curvature-dimension condition $CD^*(K,N)$ for $K\in \R$, $N\in [1,\infty)$ and if $$d_p\Phi_0(v_0)+d_p\Phi_1(v_1)\leq 0\mbox{ $\forall p\in \partial X_i$ and any normal vectors $v_i\in \Sigma_pX_i$, $i=0,1$},$$ then the glued metric measure space $(X_0\cup_{\mathcal I} X_1, (\iota_0)_{\#}\m_{X_0}+(\iota_1)_{\#}\m_{X_1}))$ satisfies the reduced curvature-dimension condition $CD^*(K,N)$.
\end{theorem}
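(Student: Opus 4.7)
The plan is to verify $CD^*(K,N)$ on the glued space $Y := X_0 \cup_{\mathcal I} X_1$ (equipped with $\m_Y := (\iota_0)_{\#}\m_{X_0} + (\iota_1)_{\#}\m_{X_1}$) by studying $L^2$-optimal transport along Wasserstein geodesics, with particular attention to mass crossing the gluing interface $\Sigma := \iota_0(\partial X_0) = \iota_1(\partial X_1)$. By Petrunin's gluing theorem, $(Y,d_Y)$ is Alexandrov with curvature bounded below; in particular it is a proper, geodesic, essentially non-branching space on which geodesic Jacobians along optimal transports are well defined. Moreover, Petrunin's reflection principle for geodesics across a gluing locus tells us that a minimising geodesic in $Y$ generically either stays in $\iota_i(X_i^{\mathrm{int}})$ for some $i$, or crosses $\Sigma$ transversally at a single interior time, with incoming and outgoing velocities which are mirror images of one another across the tangent hyperplane to $\Sigma$.

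I would then use the characterisation of $CD^*(K,N)$ as the $\sigma_{K,N}^{(t)}$-distortion inequality along $\m_Y$-a.e.\ Wasserstein geodesic. When such a geodesic is supported away from $\Sigma$, the inequality is inherited from the assumed $CD^*(K,N)$ on the piece containing it via the isometric embedding $\iota_i$. The essential new content is therefore the behaviour of the distortion along a geodesic $\gamma\co[0,1]\to Y$ meeting $\Sigma$ at some interior time $t_0$. Setting $x := \gamma(t_0)$ and letting $v_0\in\Sigma_xX_0$, $v_1\in\Sigma_xX_1$ be the incoming and outgoing unit directions, a first-variation argument forces both $v_i$ to be normal vectors to $\partial X_i$ in the sense of the theorem; otherwise a local bending inside $\Sigma$ would shorten $\gamma$.

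Next I would decompose the geodesic Jacobian of $\gamma$ with respect to $\m_Y$ as the product of the purely geometric Jacobian---which has the appropriate one-sided concavity on each closed half of $[0,1]$ by the Alexandrov lower curvature bound---and the weight $\Phi$ along $\gamma$. The hypothesis $CD^*(K,N)$ on each $X_i$ supplies the desired distortion inequality on each of the half-intervals $[0,t_0]$ and $[t_0,1]$; to paste them into a single $(K,N)$-distortion estimate on $[0,1]$, it suffices that the left- and right-derivatives of $\log\Phi\circ\gamma$ at $t_0$ add up non-positively. Since $\Phi_0 = \Phi_1\circ\mathcal I$ makes $\Phi$ continuous across $\Sigma$, this reduces precisely to the assumption $d_x\Phi_0(v_0) + d_x\Phi_1(v_1)\le 0$. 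A concavity-gluing lemma for semi-concave functions on the real line then yields the $\sigma_{K,N}^{(t)}$-distortion estimate along $\gamma$ on the full interval $[0,1]$.

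Finally, a standard approximation / monotone-class argument, combined with essential non-branching of $Y$, promotes the pointwise estimate from individual geodesics to the full distortion inequality between arbitrary absolutely continuous measures on $Y$, since the set of optimal geodesics meeting $\Sigma$ non-transversally is negligible with respect to any optimal plan. The principal obstacle is the pasting step described above: one must precisely track how the $(K,N)$-concavity inherited from each side combines with the additive jump in the logarithmic derivative of $\Phi$ at the crossing, and also handle boundary points where the normal vector fails to be unique or where several crossing geodesics may emanate in the same direction. These technicalities should ultimately reduce to the clean transversal situation by approximation inside the Alexandrov structure of $Y$.
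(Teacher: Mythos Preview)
Your proposal has two genuine gaps, both of which the paper treats as the central difficulties.

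First, the assertion that a minimising geodesic in $Y$ ``generically \ldots\ crosses $\Sigma$ transversally at a single interior time'' is not known for general glued spaces; it holds for the \emph{double} (by symmetry) but there is no ``Petrunin reflection principle'' in the general case. The paper explicitly flags this: it is unknown whether geodesics between interior points meet $\Sigma$ only finitely many times, and the authors devote Section~3 to showing that, with respect to the quotient measure $\mathfrak q$ of any $1D$ localisation, the set of transport geodesics which meet $\Sigma$ infinitely often (equivalently, which are tangent to $\Sigma$ at an interior time) has measure zero. Without this, your pasting argument has nothing to work with on a set of positive measure.

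Second, your first-variation claim that the incoming and outgoing directions $v_0,v_1$ at a crossing point are normal to $\partial X_i$ is false. At a regular crossing point $p$ one has $\Sigma_pZ=\mathbb S^{n-1}$ glued from two half-spheres, and $\dot\gamma^+$, $\dot\gamma^-$ are merely antipodal in $\mathbb S^{n-1}$; they can have arbitrary tangential components. The hypothesis $d_p\Phi_0(v_0)+d_p\Phi_1(v_1)\le 0$ therefore does not apply directly. The paper's Lemma~4.2 handles this by also assuming that $\Phi|_Y$ is differentiable at $p$ (which holds $\mathcal H^{n-1}$-a.e.\ by Lemma~3.1) and then running a comparison on the great circle through $\dot\gamma^\pm$ to deduce $-d\Phi_0(\dot\gamma^-)\ge d\Phi_1(\dot\gamma^+)$ from the normal-direction hypothesis; this is not a triviality.

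Methodologically the paper also proceeds differently: rather than verifying the $\sigma$-distortion inequality along $W_2$-geodesics directly, it uses the Cavalletti--Mondino $1D$ localisation for an arbitrary $1$-Lipschitz function $u$, establishes that $(\Phi\circ\gamma)\,h_\gamma$ satisfies the needle $CD(K,N)$ inequality for $\mathfrak q$-a.e.\ $\gamma$ (via the two ingredients above plus the one-dimensional concavity-gluing Lemma~2.13), and then invokes the Cavalletti--Milman theorem $CD^1_{Lip}(K,N)\Rightarrow CD^*(K,N)$. This route is what makes the measure-zero statements about bad geodesics usable, since they are phrased in terms of~$\mathfrak q$.
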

\begin{remark}
If the measures are finite, one can replace in the conclusion of Theorem \ref{main} the condition $CD^*(K,N)$ with the full curvature-dimension condition $CD(K,N)$. In this case the two conditions are equivalent \cite{cavmil}.
\end{remark}
\begin{corollary}\label{maincor}
For $i=0,1$ let $X_i$ be Alexandrov spaces with curvature bounded below, and let $\mathcal I:\partial X_0\rightarrow \partial X_1$ be an isometry. Assume the metric measure spaces $(X_i,d_{X_i},\mathcal H^n_{X_i})$ satisfy the condition  $CD^*(K,N)$
for $K\in \R, N\in [1,\infty)$. 

Then the metric measure space $(X_0\cup_{\mathcal I} X_1, \mathcal H^n_{X_0\cup_{\mathcal I} X_1})$ satisfies the condition $CD^*(K,N)$.
\end{corollary}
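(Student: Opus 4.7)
The plan is to deduce Corollary \ref{maincor} directly from Theorem \ref{main} by choosing the densities to be constant. Set $\Phi_0 \equiv 1$ on $X_0$ and $\Phi_1 \equiv 1$ on $X_1$. Constant functions are trivially semi-concave, the compatibility $\Phi_0 = \Phi_1\circ \mathcal I$ holds automatically, and since $d_p\Phi_i \equiv 0$ everywhere, the normal-vector inequality $d_p\Phi_0(v_0) + d_p\Phi_1(v_1)\leq 0$ is satisfied vacuously for all $p\in\partial X_i$ and all choices of normal vectors $v_i\in \Sigma_p X_i$. Thus every hypothesis of Theorem \ref{main} is in force, and the theorem yields that $(X_0\cup_{\mathcal I} X_1, (\iota_0)_{\#}\mathcal H^n_{X_0}+(\iota_1)_{\#}\mathcal H^n_{X_1})$ satisfies $CD^*(K,N)$.

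The only substantive remaining step, and the one I would expect to be the main (still routine) obstacle, is to identify the pushforward measure $(\iota_0)_{\#}\mathcal H^n_{X_0}+(\iota_1)_{\#}\mathcal H^n_{X_1}$ with the intrinsic Hausdorff measure $\mathcal H^n_{X_0\cup_{\mathcal I}X_1}$ of the glued space. To do this I would argue that each inclusion $\iota_i\colon X_i\hookrightarrow X_0\cup_{\mathcal I}X_1$ is an isometric embedding in the interior $X_i\setminus \partial X_i$ (a short geodesic in the glue cannot enter the other side without crossing the seam, and near the seam one appeals to Petrunin's gluing theorem to see that the induced metrics agree with the length metric pulled back from $X_i$), so that $\mathcal H^n$ computed in $X_i$ agrees with $\mathcal H^n$ computed in the glued space on each open half. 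Finally, the seam $\iota_0(\partial X_0)=\iota_1(\partial X_1)$ has Hausdorff dimension at most $n-1$ and is therefore $\mathcal H^n$-negligible in the glued space, so summing the two interior contributions gives exactly $\mathcal H^n_{X_0\cup_{\mathcal I}X_1}$. Replacing the measure in the conclusion of Theorem \ref{main} by this equal measure proves the corollary.
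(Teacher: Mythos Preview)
Your proposal is correct and matches the paper's intended derivation: the corollary is obtained from Theorem~\ref{main} by taking $\Phi_0\equiv\Phi_1\equiv 1$, and the measure identification $(\iota_0)_{\#}\mathcal H^n_{X_0}+(\iota_1)_{\#}\mathcal H^n_{X_1}=\mathcal H^n_{X_0\cup_{\mathcal I}X_1}$ is handled in the paper exactly as you outline, via Remark~\ref{rem:local} (local coincidence of the metrics off the seam) together with $\mathcal H^n$-negligibility of $Y$.
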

\begin{remark}
An Alexandrov space with curvature bounded from below is infinitesimally Hilbertian. Therefore it satisfies the condition $CD(K,N)$ (or $CD^*(K,N)$) if only if it satisfies the Riemannian curvature-dimension condition $RCD(K,N)$ (or $RCD^*(K,N)$) (Corollary \ref{cor:rcd}).
\end{remark}
\begin{remark}{If $X_i$ are convex domains in smooth Riemannian manifolds with lower Ricci curvature bounds, then the  statement of Corollary \ref{maincor} is regarded as folklore. A complete proof has been given in \cite{PS}, based on a detailed approximation property derived in \cite{sch}.}
\end{remark}

For general  noncollapsed $RCD$ spaces there  are two natural notions of boundary: one that was introduced by DePhillippis and Gigli in \cite{GP-noncol} and another by Mondino and the first named author in ~\cite{Kap-Mon19}. 
Conjecturally both notions coincide and the boundary (defined either way) is a closed subset in the ambient space.
We make the following conjecture
\begin{conjecture}
For $i=0,1$ let $X_i$ be noncollapsed $RCD(K,n)$ spaces with nonempty boundary $\partial X_i$.  Suppose there exists an isometry $\mathcal I:\partial X_0\rightarrow \partial X_1$.  Then the glued  metric measure space $(X_0\cup_{\mathcal I} X_1, \mathcal H^n_{X_0\cup_{\mathcal I} X_1})$ satisfies the condition $RCD(K,n)$. 
\end{conjecture}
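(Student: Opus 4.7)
The plan is to mirror the strategy of Theorem \ref{main}, replacing the Alexandrov-based ingredients (Perelman--Petrunin gluing, semi-concavity of $\Phi_i$) by the much thinner structural theory currently available for noncollapsed $RCD$ spaces, and to fill the remaining gap by smooth approximation.

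First I would \emph{assume} the closedness and coincidence of the De Philippis--Gigli and Kapovitch--Mondino boundaries, as indicated in the paragraph preceding the conjecture. Combined with the structure theory of noncollapsed $RCD(K,n)$ spaces this yields: $\mathcal H^n$-a.e. interior point has tangent cone $\R^n$; $\mathcal H^n$-a.e. boundary point has tangent cone $\R^{n-1}\times[0,\infty)$; the singular set has codimension $\ge 2$ (in particular it does not contain $\partial X_i$); and at $\mathcal H^{n-1}$-a.e. boundary point there is a well-defined outward unit normal. Under the isometry $\mathcal I$ these normals on the two sides match along $\partial X_0\cong\partial X_1$, providing the replacement for the pointwise normals $v_i\in \Sigma_p X_i$ used in Theorem \ref{main}. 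Since the density is $\Phi_i\equiv 1$, the analogue of the condition $d_p\Phi_0(v_0)+d_p\Phi_1(v_1)\le 0$ is automatic, in agreement with Corollary \ref{maincor}.

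Second, I would prove a local gluing theorem by approximation. Cover a neighbourhood of $\partial X_i$ by small $RCD$-convex open sets; on each of them apply an $\epsilon$-regularity / smoothing argument to obtain smooth Riemannian approximants with boundary (whose Ricci curvature is bounded below by $K$, up to vanishing error) converging in pointed measured Gromov--Hausdorff sense. Apply the smooth gluing result of \cite{PS, sch} to the approximants, and pass to the limit using stability of $CD^*(K,n)$ under pmGH convergence. Globalising local to global gluings is a Mayer--Vietoris-type localisation of $CD^*(K,n)$. Finally, infinitesimal Hilbertianity of the glued space is inherited from each $X_i$ because the gluing locus is $\mathcal H^n$-null and supports no Cheeger energy, so the $CD^*(K,n)$ conclusion upgrades to the full $RCD(K,n)$ as in the remarks following Theorem \ref{main}.

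The main obstacle is the second step. In the Alexandrov category, Petrunin's gluing theorem \cite{pg} is purely synthetic; nothing of the kind is currently known for $RCD$ spaces, and the entropy-convexity definition of $CD^*(K,N)$ seems genuinely harder to verify directly at a codimension-$1$ gluing locus, because one has no a priori control on how $W_2$-geodesics cross $\partial X_i$. The smooth-approximation route is itself open at the required level of generality: existing $\epsilon$-regularity theorems (Cheeger--Jiang--Naber, De Philippis--Gigli) are for interior regular points, and producing smooth half-space models around boundary regular points with the sectional / Ricci control needed to invoke \cite{PS, sch} is not yet available. Any proof will therefore have to either resolve these boundary regularity questions, or develop a genuinely synthetic argument --- most plausibly via a $1$-D localisation / needle decomposition that prescribes how a needle behaves as it crosses the gluing locus.
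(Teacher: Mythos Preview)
The statement you are attempting to prove is labelled \emph{Conjecture} in the paper and is not proved there; there is no ``paper's own proof'' to compare against. The authors state it immediately after Corollary~\ref{maincor} precisely because the Alexandrov machinery underlying Theorem~\ref{main} (Petrunin's gluing theorem, semiconcave calculus, the structure of geodesics near the boundary) has no known analogue for general noncollapsed $RCD(K,n)$ spaces.

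Your proposal is not a proof, and to your credit you say so explicitly in the final paragraph. The first step (assuming the boundary conjectures of De~Philippis--Gigli and Kapovitch--Mondino) is reasonable as a working hypothesis but already conditional. The genuine gap is the second step, exactly as you identify: there is currently no smooth approximation theorem for noncollapsed $RCD$ spaces with boundary that would allow you to invoke \cite{PS, sch}, and no synthetic substitute for Petrunin's gluing theorem that would give the glued space \emph{some} a~priori curvature-dimension bound. Without such a bound you cannot run the $1D$ localisation argument of Section~3, because the very existence of the disintegration with $CD$ needles (Theorem~\ref{th:1dlocalisation}) and the measure-zero statements for tangential geodesics (Lemma~\ref{lemma:tangentgeodesics}, Corollary~\ref{cor: tan-measure 0}) require the glued space to already be essentially non-branching $CD^*$. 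In the Alexandrov case this bootstrap is supplied by Theorem~\ref{th:petruningluing}; in the $RCD$ case there is nothing to start from.

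In short: the paper offers no proof, your outline correctly isolates the missing ingredient, and until either a synthetic gluing theorem or a boundary-compatible smooth approximation for noncollapsed $RCD$ spaces becomes available, the conjecture remains open.
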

As a biproduct of our proof Theorem \ref{main} we also obtain the following result that also seems to be new.

\begin{theorem}\label{th:concavity}
For $i=0,1$ let $X_i$ be $n$-dimensional Alexandrov spaces with curvature bounded below as in the previous theorem, let $X_0\cup_{\mathcal I} X_1$ be the glued Alexandrov spaces
and let $\Phi_i:X_i\rightarrow \R$, $i=0,1$, be semi-concave with $\Phi_0|_\partial X_0=\Phi_1|_\partial X_1$ such that for any $p\in \partial X_i$ it holds that
\begin{align*}
d\Phi_0|_p(v_0)+ d\Phi_1|_p(v_1)\leq 0\ \ \forall\mbox{ normal vectors }v_i\in \Sigma_pX_i,\  i=0,1.
\end{align*}
Then $\Phi_0+\Phi_1: X_0\cup_{\mathcal I} X_1 \rightarrow \R$ is semiconcave. 
\end{theorem}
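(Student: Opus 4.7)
The plan is to verify that the function $\Phi : X \to \R$ on the glued space $X := X_0 \cup_{\mathcal I} X_1$ defined by $\Phi|_{X_i} = \Phi_i$ (well-defined by the compatibility $\Phi_0|_{\partial X_0} = \Phi_1|_{\partial X_1}$) is semi-concave. By Petrunin's gluing theorem, $X$ is itself Alexandrov with curvature bounded below; since semi-concavity is a local condition that can be verified along geodesics, it suffices to show that for every constant-speed geodesic $\gamma$ in $X$ and every parameter $t_0$, the composition $\Phi \circ \gamma$ is $\lambda$-concave in a neighborhood of $t_0$, with $\lambda$ uniform on compact sets. Since the intrinsic metric of $X_i \subset X$ agrees with $d_{X_i}$, a geodesic of $X$ that remains in one closed half $X_i$ (even if it meets the boundary) is a geodesic of $X_i$, so the semi-concavity of $\Phi_i$ settles that case.

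The nontrivial case is a geodesic $\gamma$ that genuinely crosses the boundary at an interior time $t_0$. Fix such a $\gamma$ with $p := \gamma(t_0) \in \partial X_0 = \partial X_1$, $\gamma(t) \in X_0$ for $t$ slightly less than $t_0$, and $\gamma(t) \in X_1$ for $t$ slightly greater. Write $u_0 \in \Sigma_p X_0$ and $u_1 \in \Sigma_p X_1$ for the initial directions of the backward and forward half-geodesics of $\gamma$ at $p$. Since $\gamma$ is a geodesic of $X$, the angle $\angle(u_0,u_1) = \pi$ in the glued space of directions $\Sigma_p X = \Sigma_p X_0 \cup_{d\mathcal I} \Sigma_p X_1$, and the Alexandrov first variation formula gives
\begin{align*}
\frac{d^-}{dt}\Big|_{t_0}\Phi\circ\gamma \;=\; -d_p\Phi_0(u_0), \qquad \frac{d^+}{dt}\Big|_{t_0}\Phi\circ\gamma \;=\; d_p\Phi_1(u_1).
\end{align*}
Combined with the two-sided semi-concavity inherited from each $\Phi_i$, local $\lambda$-concavity of $\Phi\circ\gamma$ at $t_0$ reduces to the pointwise inequality
\begin{align*}
d_p\Phi_0(u_0) + d_p\Phi_1(u_1) \;\leq\; 0.
\end{align*}

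The hypothesis supplies this only for normal vectors, whereas the crossing tangents $u_0, u_1$ are in general not normal; the heart of the proof is to bootstrap from the normal case to all crossing directions using the boundary agreement of the $\Phi_i$. Since $\angle(u_0,u_1) = \pi$ in $\Sigma_p X$, a minimizing path between them meets $\partial\Sigma_p$ at some $w$ with $\angle(u_0,w)+\angle(w,u_1) = \pi$; each $u_i$ then lies on a geodesic in $\Sigma_p X_i$ from the tangential direction $w$ toward a normal vector $n_i$. The plan is then to combine: (i) the concavity of the differential $d_p\Phi_i$ as a positively homogeneous function on the tangent cone $T_p X_i$, yielding a spherical-cosine-type upper bound on $d_p\Phi_i(u_i)$ in terms of $d_p\Phi_i(w)$ and $d_p\Phi_i(n_i)$; (ii) the tangential equality $d_p\Phi_0(w) = d_p\Phi_1(d\mathcal I(w))$, which follows from $\Phi_0|_{\partial X_0} = \Phi_1\circ\mathcal I|_{\partial X_0}$ because $w \in \partial\Sigma_p$ represents a direction tangent to the common boundary; and (iii) the hypothesis applied to normal vectors, $d_p\Phi_0(n_0) + d_p\Phi_1(n_1) \leq 0$. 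Adding these inequalities gives the required bound for $u_0,u_1$.

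The main obstacle is step (i): producing the quantitative spherical decomposition of an interior direction $u_i \in \Sigma_p X_i$ into boundary-tangential and normal parts, together with a sharp concavity bound on $d_p\Phi_i$ along the connecting geodesic in $\Sigma_p X_i$ (which has curvature bounded below by $1$). This requires a careful description, in the spirit of Perelman and Petrunin's analysis of Alexandrov spaces with boundary, of how geodesics of the glued space meet the common boundary and where the set of normal vectors sits inside each $\Sigma_p X_i$. Once this infinitesimal comparison is in place, the one-sided derivative jump of $\Phi\circ\gamma$ at every boundary crossing is non-positive, and standard Alexandrov semi-concave calculus assembles these local bounds into local $\lambda$-concavity of $\Phi$ on all of $X$.
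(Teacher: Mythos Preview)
Your outline captures the right infinitesimal picture at a single ``nice'' crossing, and your steps (i)--(iii) are essentially the content of the paper's Lemma~\ref{lem:important}. However, the plan has a genuine gap in how it passes from this pointwise analysis to semi-concavity along an \emph{arbitrary} geodesic.

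First, your phrasing ``a geodesic $\gamma$ that genuinely crosses the boundary at an interior time $t_0$'' implicitly assumes that $\gamma^{-1}(Y)$ consists of isolated points. The paper stresses that for general glued Alexandrov spaces this is not known: a geodesic may hit $Y$ infinitely often, and there is no a~priori control on the structure of $\gamma^{-1}(Y)$. Your scheme of checking the one-sided derivative jump at each crossing and then invoking Lemma~\ref{importantlemma} only makes sense once you know the crossings are finite.

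Second, your decomposition in step~(i) tacitly uses that $\Sigma_pX_i$ is the round half-sphere with a \emph{unique} normal vector (the pole), and your step~(ii) needs $d_p\Phi|_{T_pY}$ to be linear so that the two tangential boundary contributions cancel. Both hold precisely when $p\in Y$ is a regular point of $Z$ and $\Phi|_Y$ is differentiable there. Neither is guaranteed for a given geodesic: the crossing point may be singular, and $\Phi|_Y$ is only known to be differentiable $\mathcal H^{n-1}$-a.e.\ (Lemma~\ref{lemma}).

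The paper resolves both issues not by strengthening the infinitesimal argument but by an approximation: Section~3 (Proposition~\ref{prop}, Corollary~\ref{cor: tan-measure 0}, and Corollary~\ref{cor:important}) shows, via $1D$ localisation, that for any interior point $x_1$ the geodesic from $\mathcal H^n$-a.e.\ $x_0$ to $x_1$ crosses $Y$ only finitely often, only at regular points, and only where $\Phi|_Y$ is differentiable. Lemma~\ref{lem:important} then gives $\lambda$-concavity of $\Phi_Z$ along each such generic geodesic, and the proof of Theorem~\ref{th:concavity} concludes by approximating an arbitrary geodesic (restricted away from its endpoints) by generic ones and using continuity of $\Phi_Z$ together with nonbranching. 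This measure-theoretic detour is the missing idea in your proposal; without it, the ``standard Alexandrov semi-concave calculus'' you invoke at the end cannot be applied.
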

We say that
a function $\Phi:X\rightarrow \R$ on an Alexandrov space $X$ is double semi-concave if $\Phi\circ P:\hat X\rightarrow \R$ is semi-concave in the usual sense where $\hat X$ denotes the Alexandrov double space of $X$ and $P:\hat X\rightarrow X$ is the canoncial map. We give an alternative characterisation of this condition in Lemma \ref{gluingcondition} and Corollary \ref{cor:gluingfct}.

As another consequence of our main theorem we also obtain the following.
\begin{corollary}[Doubled spaces]\label{main2}
Let $X$ be an $n$-dimensional Alexandrov space with curvature bounded below, and let $\m_X=\Phi \mathcal H^n_X$ be a measure for a double semi-concave function $\Phi: X\rightarrow [0,\infty)$. Assume the metric measure space $(X,d_X,\m_X)$ satisfies the condition $CD^*(K,N)$ for $K\in \R$ and $N\geq 1$. Then, the double space
 $(\hat X, d_{\hat X}, \m_{\hat X})$ satisfies the condition $CD^*(K,N)$.
\end{corollary}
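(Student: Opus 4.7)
The plan is to deduce Corollary \ref{main2} directly from Theorem \ref{main} by applying it to the symmetric setting $X_0 = X_1 = X$, $\Phi_0 = \Phi_1 = \Phi$, with the gluing isometry taken to be $\mathcal I = \mathrm{id}_{\partial X}$. In that case the glued space $X_0 \cup_{\mathcal I} X_1$ is by definition the Alexandrov double $\hat X$, and, since $\partial X$ has vanishing $n$-dimensional Hausdorff measure, the summed pushforward measure $(\iota_0)_{\#}(\Phi\, \mathcal H^n_{X}) + (\iota_1)_{\#}(\Phi\, \mathcal H^n_{X})$ coincides with $\m_{\hat X} = (\Phi\circ P)\,\mathcal H^n_{\hat X}$. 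So the conclusion of Theorem \ref{main} applied to this symmetric gluing is precisely the conclusion of Corollary \ref{main2}.

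The hypotheses of Theorem \ref{main} that need checking are: (i) that $\Phi$ is semi-concave as a function on $X$ (not just that $\Phi \circ P$ is semi-concave on $\hat X$), and (ii) the tangential inequality
\begin{equation*}
d_p\Phi_0(v_0)+d_p\Phi_1(v_1)\leq 0
\end{equation*}
at every boundary point $p$ and every pair of normal vectors $v_i\in \Sigma_p X_i$. Item (i) is immediate: since the canonical map $P:\hat X\to X$ is $1$-Lipschitz and restricts to an isometry on each copy of $X\setminus \partial X$, semi-concavity of $\Phi\circ P$ on $\hat X$ pulls back to semi-concavity of $\Phi$ on $X$. For item (ii), in the symmetric setting the two copies of $\Sigma_p$ are identical, so the inequality specializes to $2\,d_p\Phi(v)\le 0$, i.e. $d_p\Phi(v)\le 0$ for every normal vector $v\in \Sigma_p$ at every $p\in \partial X$.

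The main point, and the only nontrivial step, is therefore to verify that double semi-concavity of $\Phi$ implies $d_p\Phi(v)\le 0$ for all normal vectors $v$ at all $p\in \partial X$. This is exactly the content of the alternative characterisation of double semi-concavity referred to in the excerpt (Lemma \ref{gluingcondition} and Corollary \ref{cor:gluingfct}), which I would invoke as a black box. Heuristically it is clear: at a boundary point $p$, a normal vector $v$ in $\Sigma_p$ and its mirror image $v'$ in the other copy together form a geodesic of length $\pi$ in the doubled space of directions $\hat\Sigma_p=\Sigma_p\cup_{\partial \Sigma_p}\Sigma_p$, so $(\Phi\circ P)$ would be semi-concave along this geodesic only if its two one-sided derivatives $d_p\Phi(v)$ and $d_p\Phi(v')=d_p\Phi(v)$ sum to a non-positive number.

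Once the tangential condition is verified, the hypotheses of Theorem \ref{main} are satisfied in the symmetric setting, and the corollary follows. The main obstacle is purely the passage from the ``intrinsic'' formulation of double semi-concavity (in terms of $\hat X$) to the ``boundary'' formulation used in Theorem \ref{main}; once the appropriate characterisation lemma is in hand, the rest of the argument is formal.
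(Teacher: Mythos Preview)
Your proposal is correct and matches the paper's intended derivation: Corollary \ref{main2} is stated as a direct consequence of Theorem \ref{main}, and the bridge between the hypothesis ``$\Phi$ is double semi-concave'' and the hypotheses of Theorem \ref{main} is exactly Lemma \ref{gluingcondition} (semi-concavity of $\Phi$ plus $d_p\Phi(v)\le 0$ for normal $v$), which you invoke and whose proof sketch you also reproduce.
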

{
Let us  briefly comment on the statement and  the proof of Theorem \ref{main} and Corollary \ref{maincor}. By Petrunin's glued space theorem one knows that the glued space of two Alexandrov spaces with curvature bounded from below is again an Alexandrov space with the same lower  curvature bound. However the intrinsic {\it best lower Ricci bound} might be different from the Alexandrov curvature bound. So Petrunin's theorem does not imply any of the statements above. 

But we can use the improved regularity of the glued space for our purposes. It implies {\it some} lower Ricci bound that yields a priori information for transport densities and densities along needles in the Cavalletti-Mondino $1D$ localisation procedure. At this point a crucial difficulty appears. It is not known whether geodesics cross the boundary set where the spaces are glued together, only finitely many times.  This difficulty does not occur for the double space construction. By symmetry in this case it is known that geodesics in the double space only cross once. 

For general glued spaces we overcome this problem by the following strategy.  First, given a $1D$ localisation we show that the collection of  geodesic  that cross the boundary infinitely many times has measure $0$ w.r.t. to the corresponding quotient measure. Then, we apply a theorem of Cavalletti and Milman on characterization of synthetic Ricci curvature bounds via $1D$ localisation.
}

\begin{remark} 
As pointed out by Rizzi \cite{Ri17},
 in the previous theorem one cannot replace the curvature-dimension conditon $CD(K,N)$ for any $K\in \R$ and $N\in (1,\infty)$ with the measure contraction property $MCP(K,N)$ \cite{stugeo2, ohtamcp}. The $MCP$ is a weaker condition that still characterizes lower Ricci curvature bounds for $N$-dimensional smooth manifolds and is also consistent with lower Alexandrov curvature bounds. For the precise definition we refer to  \cite{stugeo2, ohtamcp}. 
 The counterexample in \cite{Ri17} is given by the Grushin half-plane which satisfies $ MCP(0,N)$ if and only if $N\ge4$ while its double satisfies $ MCP(0,N)$ if and only if $N\ge 5$.

 Another example that is even  Alexandrov is provided in the last section of this article (Example \ref{example}).
\end{remark}
\begin{remark}
Let us mention that one can show that Petrunin's gluing theorem holds for gluing $n$-dimensional Alexandrov spaces along isometric extremal subsets of codimension 1 which do not need to be equal to the whole components of their boundaries (see for instance \cite{mitsuishi}). For example gluing two triangles having a side of equal length with all adjacent angles to it $\le \pi/2$ is again an Alexandrov space (a convex quadrilateral). Our results then generalize to this situation as well.  
\end{remark}

\subsection{Application to heat flow with Dirichlet boundary condition}

The concept of doubling has recently found significant application in the study of the heat flow with Dirichlet boundary conditions. In particular, it allows the use of optimal transportation techniques.
As widely known, these techniques are not directly applicable since the Dirichlet heat flow will not preserve masses.

As observed in \cite{PS}, this obstacle can be overcome by looking at the heat flow in the doubled space instead. The latter is accessible to optimal transport techniques and to the powerful theory of
metric measure spaces with synthetic Ricci bounds.
Moreover, it can always be expressed as a linear combination of the Dirichlet heat flow and the Neumann heat flow on the original space -- and vice versa, both the Dirichlet and the Neumann heat flow on the original space can be expressed in terms of the heat flow on the doubled space.

More precisely now, let $X$ be an $n$-dimensional Alexandrov space with curvature bounded below, and let $\m_X=\Phi \mathcal H^n_X$ be a measure for a double semi-concave function $\Phi: X\rightarrow [0,\infty)$. 
Let $(P_t)_{t\ge0}$ denote the heat semigroup with Neumann boundary conditions on $X$ and let $(P^0_t)_{t\ge0}$ denote the heat semigroup on $X^0:=X\setminus \partial X$ with Dirichlet boundary conditions with respective generators $\Delta$ and $\Delta^0$.
\begin{theorem}
Assume the metric measure space $(X,d_X,\m_X)$ satisfies the condition $CD(K,\infty)$ for $K\in \R$. Then the following gradient estimate of Bakry-Emery type
\begin{equation}\label{grad}\big|\nabla P_t^0f\big|\le e^{-Kt}\, P_t\big|\nabla f\big|\quad\text{a.e.~on }X^0\end{equation}
and the following Bochner inequality hold true 
\begin{equation}\label{boch}\frac12\Delta\, \big|\nabla f\big|^2-\big\langle\nabla f, \nabla\Delta^0 f\big\rangle \ge K\, \big|\nabla f\big|^2\end{equation}
weakly on $X^0$ for all sufficiently smooth $f$ on $X$. (Note that in the latter estimate, two different Laplacians appear and in the former, two different heat semigroups.)
{More precisely, \eqref{grad} holds for all $f\in W^{1,2}_0(X^0)$, the form domain for the Dirichlet Laplacian. And  \eqref{boch}  is rigorously formulated as 
\begin{equation*}\frac12\int_{X^0} \Delta\varphi\, \big|\nabla f\big|^2\,d\m-\iint_{X^0}\varphi\,\big\langle\nabla f, \nabla\Delta^0 f\big\rangle\,d\m \ge K\, \int_{X^0}\varphi\, \big|\nabla f\big|^2\,d\m\end{equation*}
for all $f\in D(\Delta^0)$ with $\Delta^0f\in W^{1,2}_0(X^0)$ and all nonnegative $\varphi\in D(\Delta^0)$ with $\varphi, \Delta^0\varphi\in L^\infty$.}
\end{theorem}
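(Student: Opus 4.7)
The plan is to transfer everything to the doubled space $(\hat X,d_{\hat X},\m_{\hat X})$. Under the hypotheses of the theorem, Corollary~\ref{main2} (applied with the finite dimension $N$ coming from the Alexandrov structure, from which $CD(K,\infty)$ passes to the double as well) shows that $\hat X$ is an $RCD(K,\infty)$ space: it is infinitesimally Hilbertian because it is again an Alexandrov space with a weighted Hausdorff measure. The canonical isometric involution $\sigma\colon\hat X\to\hat X$ exchanging the two copies preserves $\m_{\hat X}$, so the heat semigroup $\hat P_t$ and the Laplacian $\hat\Delta$ on $\hat X$ commute with $\sigma$. This yields an orthogonal decomposition
$$L^2(\hat X,\m_{\hat X})=L^2_+\oplus L^2_-$$
into even and odd functions, preserved by $\hat P_t$ and $\hat\Delta$. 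Via the projection $P\colon\hat X\to X$, the even subspace $L^2_+$ is identified isometrically with $L^2(X,\m_X)$ and the restriction $\hat P_t|_{L^2_+}$ agrees with the Neumann heat semigroup $P_t$; the odd subspace $L^2_-$ is identified with the $L^2$-closure of compactly supported functions on $X^0$, and $\hat P_t|_{L^2_-}$ agrees with the Dirichlet heat semigroup $P_t^0$, with analogous identifications at the level of form domains and domains of the generators.

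With these identifications in place, both \eqref{grad} and \eqref{boch} follow from their $\hat X$-counterparts. Given $f\in W^{1,2}_0(X^0)$, let $F\in W^{1,2}(\hat X)$ be its odd extension. The pointwise $RCD(K,\infty)$ Bakry-\'Emery estimate on $\hat X$ gives
$$\big|\nabla\hat P_tF\big|\le e^{-Kt}\,\hat P_t\big|\nabla F\big|\qquad \m_{\hat X}\text{-a.e.}$$
Because $\sigma$ is a measure-preserving isometry and $F$ is odd, $\hat P_tF$ is odd with $\hat P_tF|_{X^0}=P_t^0f$, while $|\nabla F|$ is even with $|\nabla F||_{X^0}=|\nabla f|$. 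Since $\hat P_t$ preserves parity, $\hat P_t|\nabla F|$ restricted to a single copy equals $P_t|\nabla f|$, and restriction of the displayed estimate to $X^0$ produces \eqref{grad}. For \eqref{boch} one applies the weak $RCD(K,\infty)$ Bochner inequality on $\hat X$ to the odd $F=f^-$ with an even nonnegative test function $\varphi^+$; using $\hat\Delta F|_{X^0}=\Delta^0 f$ and $\hat\Delta\varphi^+|_{X^0}=\Delta\varphi$, and the fact that every integrand in the inequality is even (so that the integral over $\hat X$ is twice the integral over a single copy of $X$), dividing by $2$ recovers the claimed weak form of \eqref{boch}.

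The principal technical obstacle is the precise identification of $W^{1,2}_0(X^0)$ (respectively, $D(\Delta^0)$) with the odd part of $W^{1,2}(\hat X)$ (respectively, $D(\hat\Delta)$). One must check that the odd extension of a function vanishing on $\partial X$ lies in $W^{1,2}(\hat X)$ with no singular gradient measure along $\partial X$, and conversely that the restriction of an odd $W^{1,2}(\hat X)$-function to a single copy has vanishing trace on $\partial X$. This rests on $\m_{\hat X}(\partial X)=0$, which holds because $\partial X$ has positive Hausdorff codimension in the $n$-dimensional Alexandrov space $\hat X$, and on the compatibility of weak gradients across the gluing interface provided by the double semi-concavity of $\Phi$ and by the structural results underlying Corollary~\ref{main2}.
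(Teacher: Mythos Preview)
Your proposal is correct and follows essentially the same route as the paper: pass to the doubled space $\hat X$, use Corollary~\ref{main2} to obtain $RCD(K,\infty)$ there, identify the Dirichlet and Neumann semigroups on $X$ with the odd and even parts of $\hat P_t$, and read off \eqref{grad} and \eqref{boch} from the corresponding estimates on $\hat X$. The paper delegates the function-space identifications and the Bochner step to \cite{PS}, Thm.~1.26, and adds a short approximation argument to remove an extra $L^1$ assumption on $f$ and $|\nabla f|$; your write-up instead sketches these identifications directly, but the substance is the same.
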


\begin{proof} Both estimates follow from Corollary \ref{main2} and \cite{PS}, Thm. 1.26. {
For the readers' convenience, let us briefly recall the main argument. The estimates for the Dirichlet heat semigroup and Dirichlet Laplacian are direct consequences of analogous estimates for the heat semigroup $(\hat P_t)_{t\ge0}$ and Laplacian $\hat\Delta$ on the doubled space $$\hat X:=X\cup X'\Big/_{\partial X=\partial X'}$$ obtained by gluing $X$ and a copy of it, say $X'$, along their common boundary $\partial X\sim\partial X'$. Then Dirichlet and Neumann heat semigroups on $X$ can be expressed in terms of the heat semigroup on $\hat X$ as 
$$P_t^0f=\hat P_t(f-f'), \qquad P_tf=\hat P_t (f+f')$$ for any given bounded, measurable $f:X\to\R$
where $f$ is extended to $\hat X$ by putting $f:=0$ on $\hat X\setminus X$ and where $f':\hat X\to\r$ is defined as  $f'(x'):=f(x)$ if 
$x'\in X'$ denotes the mirror point of $x\in X$. 
Then the gradient estimate for $\hat P_t$ on $\hat X$ obviously implies that
$$|\nabla P^0_tf|=|\nabla \hat P_t(f-f')|\le e^{-Kt}\, \hat P_t|\nabla (f-f')|=e^{-Kt}\, P_t|\nabla f|$$
for every $f\in W^{1,2}_0(X^0)$.

Actually, \eqref{grad} is stated in  \cite{PS} only for functions $f\in W^{1,2}_0(X^0)$ which in addition satisfy
$f, |\nabla f|\in L^1$.
 But any $f \in W^{1,2}_0(X^0)$ can be approximated in $W^{1,2}$-norm by compactly supported Lipschitz functions $f_n$ (which in particular satisfy $f_n, |\nabla f_n|\in L^1$). Hence, $P_t  |\nabla f|$ is the $L^2$-limit of $P_t  |\nabla f_n|$ and the claim follows by passing to a suitable subsequence which leads to a.e.-convergence.
}\end{proof}

\subsubsection*{We outline the remaining content of the article.}

In section 2 we recall preliminaries and basics on optimal transport, Ricci curvature for metric measure spaces, Alexandrov spaces, gluing of Alexandrov spaces and  1D localisation technique. We also state a new result by Cavalletti and Milman on characterizing the Ricci curvature bounds via $1D$ localisation.

In section 3 we will give two application of the $1D$ localisation technique. The first application shows that almost all geodesics avoid set of $\mathcal H^n$-measure $0$ in the boundary in the glued space. The second application shows that given a $1D$ localisation w.r.t. an arbritrary $1$-Lipschitz function, geodesics that are tangential to the boundary have measure $0$ w.r.t. the corresponding quotient measure.

In section 4 we use the results of the previous section to prove Theorem \ref{th:concavity}.

In section 5 we prove the glued space theorem applying the results we obtained in section 3 and section 4. 
\subsubsection*{Acknowledgments}
The authors want to thank Anton Petrunin for helpful conversations on  gluing spaces and other topics.
\color{black}
\section{Preliminaries}
\subsection{Curvature-dimension condition}
Let $(X,d)$ be a complete and separable metric space equipped with a locally finite Borel measure $\m$. We call a triple $(X,d,\m)$ a metric measure space. 

A geodesic is a length minimizing curve $\gamma:[a,b]\rightarrow X$.
We denote the set of constant speed geodesics $\gamma:[a,b]\rightarrow X$ with $\mathcal G^{[a,b]}(X)$ equipped with the topology of uniform convergence and set $\mathcal G^{[0,1]}(X)=:\mathcal G(X)$. For $t\in [a,b]$ the evaluation map $e_t:\mathcal G^{[a,b]}(X)\rightarrow X$ is defined as $\gamma\mapsto \gamma(t)$ and $e_t$ is continuous.

A set of geodesics $F\subset \mathcal{G}(X)$ is said to be {\it non-branching} if  $\forall\epsilon\in (0,1)$ the map $e_{[0,\epsilon]}|_{F}$ is one to one.

The set of (Borel) probability measure is denoted with $\mathcal P(X)$, the subset of probability measures with finite second moment is $\mathcal P^2(X)$,  the set of probability measures in $\mathcal P^2(X)$ that are $\m$-absolutely continuous is denoted with $\mathcal P^2(X,\m)$ and the subset of measures in $\mathcal P^2(X,\m)$ with bounded support is denoted with $\mathcal{P}_b^2(X,\m)$.

The space $\mathcal P^2(X)$ is equipped with the $L^2$-Wasserstein distance $W_2$. 
A dynamical optimal coupling is a probability measure $\Pi\in \mathcal P(\mathcal G(X))$  such that $t\in [0,1]\mapsto (e_t)_{\#}\Pi$ is a $W_2$-geodesic in $\mathcal P^2(X)$. 
The set of  dynamical optimal couplings $\Pi\in \mathcal P(\mathcal G^{}(X))$ between $\mu_0,\mu_1\in \mathcal P^2(X)$ is denoted with $\OptGeo(\mu_0,\mu_1)$. 

A metric measure space $(X,d,\m)$ is called \textit{essentially nonbranching} if for any pair $\mu_0,\mu_1\in \mathcal P^2(X,\m)$ any $\Pi\in \OptGeo(\mu_0,\mu_1)$ is concentrated on a set of nonbranching geodesics.
\begin{definition}
For $\kappa\in \mathbb{R}$ we define $\cos_{\kappa}:[0,\infty)\rightarrow \mathbb{R}$ as the solution of 
\begin{align*}
v''+\kappa v=0 \ \ \ v(0)=1 \ \ \& \ \ v'(0)=0.
\end{align*}
$\sin_{\kappa}$ is defined as solution of the same ODE with initial value $v(0)=0 \ \&\ v'(0)=1$. That is 
\begin{align*}
\cos_{\kappa}(x)=\begin{cases}
 \cosh (\sqrt{|\kappa|}x) & \mbox{if } \kappa<0\\
1& \mbox{if } \kappa=0\\
\cos (\sqrt{\kappa}x) & \mbox{if } \kappa>0
                \end{cases}
                \quad
   \sin_{\kappa}(x)=\begin{cases}
\frac{ \sinh (\sqrt{|\kappa|}x)}{\sqrt{|\kappa|}} & \mbox{if } \kappa<0\\
x& \mbox{if } \kappa=0\\
\frac{\sin (\sqrt{\kappa}x)}{\sqrt \kappa} & \mbox{if } \kappa>0
                \end{cases}                 
                \end{align*}
Let $\pi_\kappa$ be the diameter of a simply connected space form $\mathbb S^2_k$ of constant curvature $\kappa$, i.e.
\[
\pi_\kappa= \begin{cases}
 \infty \ &\textrm{ if } \kappa\le 0\\
\frac{\pi}{\sqrt \kappa}\ &  \textrm{ if } \kappa> 0

\end{cases}
\]
For $K\in \mathbb{R}$, $N\in (0,\infty)$ and $\theta\geq 0$ we define the \textit{distortion coefficient} as
\begin{align*}
t\in [0,1]\mapsto \sigma_{K,N}^{(t)}(\theta)=\begin{cases}
                                             \frac{\sin_{K/N}(t\theta)}{\sin_{K/N}(\theta)}\ &\mbox{ if } \theta\in [0,\pi_{K/N}),\\
                                             \infty\ & \ \mbox{otherwise}.
                                             \end{cases}
\end{align*}
Note that $\sigma_{K,N}^{(t)}(0)=t$.
Moreover, for $K\in \mathbb{R}$, $N\in [1,\infty)$ and $\theta\geq 0$ the \textit{modified distortion coefficient} is defined as
\begin{align*}
t\in [0,1]\mapsto \tau_{K,N}^{(t)}(\theta)=\begin{cases}
                                            \theta\cdot\infty \ & \mbox{ if }K>0\mbox{ and }N=1,\\
                                            t^{\frac{1}{N}}\left[\sigma_{K,N-1}^{(t)}(\theta)\right]^{1-\frac{1}{N}}\ & \mbox{ otherwise}
                                           \end{cases}\end{align*}
where our convention is $0\cdot \infty =0$. 
{It holds that 
\begin{align}\label{ineq:dist}
\tau_{K,N}^{(t)}(\theta)\geq \sigma_{K,N}^{(t)}(\theta).
\end{align}}
\end{definition}
%
\begin{definition}[\cite{stugeo2,lottvillani, bast}]
A metric measure space $(X,d,\m)$ satisfies the \textit{curvature-dimension condition} $CD(K,N)$ for $K\in \mathbb{R}$, $N\in [1,\infty)$ if for every pair $\mu_0,\mu_1\in \mathcal{P}_b^2(X,\m)$ 
there exists an $L^2$-Wasserstein geodesic $(\mu_t)_{t\in [0,1]}$ and an optimal coupling $\pi$ between $\mu_0$ and $\mu_1$ such that 
\begin{align}\label{ineq:cd}
S_N(\mu_t|\m)\leq -\int \left[\tau_{K,N}^{(1-t)}(\theta)\rho_0(x)^{-\frac{1}{N}}+\tau_{K,N}^{(t)}(\theta)\rho_1(y)^{-\frac{1}{N}}\right]d\pi(x,y)
\end{align}
where $\mu_i=\rho_id\m$, $i=0,1$, and $\theta= d(x,y)$.

We say $(X,d,\m)$ satisfies the \textit{reduced curvature-dimension condition} $CD^*(K,N)$ for $K\in \mathbb{R}$ and $N\in (0,\infty)$ if we replace the coefficients $\tau^{(t)}_{K,N}(\theta)$ with $\sigma_{K,N}^{(t)}(\theta)$.
%
\end{definition}
\begin{remark}
By the inequality \eqref{ineq:dist} the condition $CD(K,N)$ always implies the condition $CD^*(K,N)$ and the latter is equivalent to a local version of $CD(K,N)$.
Under the assumptions that $(X,d,\m)$ is essentially nonbranching and $\m$ is finite Cavalletti and Milman \cite{cavmil} prove that $CD(K,N)$ and $CD^*(K,N)$ are equivalent (compare with Theorem \ref{thm:cavmil} below).
\end{remark}
\begin{definition}
A metric measure space $(X,d,\m)$ satisfies the Riemannian curvature-dimension condition $RCD(K,N)$ (or $RCD^*(K,N)$)  if it satisfies the condition $CD(K,N)$ (or $CD^*(K,N)$) and is infinitesimally Hilbertian, that is the corresponding Cheeger energy is quadratic.
\end{definition}
\begin{remark}
Since $RCD(K,N)$ and $RCD^*(K,N)$ spaces are essentially non-branching, the two conditions are equivalent provided $\m$ is finite (compare with Remark 2.15 in \cite{Kap-Ket-18}.
\end{remark}

\subsection{Alexandrov spaces}\label{intro:Alex}
In the following we introduce metric spaces with Alexandrov curvature bounded from below. For an introduction to this subject we refer to \cite{bbi}.
\begin{definition}
We define $\md_\kappa:[0,\infty)\rightarrow [0,\infty)$ as the solution of 
\begin{align*}
v''+ \kappa v=1 \ \ \ v(0)=0 \ \ \&\ \ v'(0)=0.
\end{align*}
More explicitly
\begin{align*}
\md_{\kappa}(x)=\begin{cases} \frac{1}{\kappa}\left(1-\cos_{\kappa}x\right)\ &\ \mbox{ if } \kappa\neq 0,\\
                 \frac{1}{2}x^2\ &\ \mbox{ if }\kappa =0.
                \end{cases}
                \end{align*}
\end{definition}
\begin{definition}
Let $(X,d)$ be a complete geodesic metric space. We say $(X,d)$  has curvature bounded  below by $\kappa\in\mathbb{R}$ in the sense of Alexandrov  if for any  unit speed geodesic $\gamma : [0,l]\to X$ 
such that 
\begin{equation}
d(y,\gamma(0))+l+d(\gamma(l),y)<2\pi_k,
\end{equation}
it holds that 
\begin{align}
\left[\md_{\kappa}(d_y\circ\gamma)\right]''+\md_{\kappa}(d_y\circ\gamma)\leq 1.
\end{align}
If $(X,d)$ has curvature bounded from below for some $k\in \R$ in the sense of Alexandrov, we say that $(X,d)$ is an Alexandrov space.
\end{definition}

\begin{remark} Alexandrov spaces are non-branching.
\end{remark}
\begin{theorem}[Petrunin, \cite{palvs}]\label{th:petrunincd}
Let $(X,d)$ be an $n$-dimensional Alexandrov space with curvature bounded from below by $k$. 
Then, $(X,d,\mathcal H^n_X)$ satisfies the condition $CD(k(n-1),n)$.
\end{theorem}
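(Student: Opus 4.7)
The plan is to verify the displacement convexity inequality \eqref{ineq:cd} with $K=k(n-1)$ and $N=n$ along a suitable $W_2$-geodesic joining any given pair $\mu_0,\mu_1\in\mathcal P_b^2(X,\mathcal H^n_X)$. I will reduce this to a pointwise Jacobian comparison along the optimal transport map, and then extract the Jacobian bound from the Toponogov/Bishop--Gromov comparison together with the regular-point structure of Alexandrov spaces.

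First I would invoke the optimal transport theory on Alexandrov spaces. Since $(X,d)$ is non-branching and $\mathcal H^n$-almost every point is regular (with tangent cone isometric to $\mathbb R^n$), the work of Bertrand (extending McCann) gives, for absolutely continuous $\mu_0,\mu_1$ with bounded support, a unique optimal coupling supported on the graph of a map
\[
T(x)=\exp_x\bigl(-\nabla\phi(x)\bigr),
\]
where $\phi$ is a $d^2/2$-concave Kantorovich potential; such a $\phi$ is semi-concave, hence differentiable $\mathcal H^n$-almost everywhere on its domain. The unique $W_2$-geodesic joining $\mu_0$ to $\mu_1$ is then $\mu_t=(F_t)_\#\mu_0$ with $F_t(x)=\exp_x(-t\,\nabla\phi(x))$, transporting mass along honest geodesics $\gamma_x\colon[0,1]\to X$ from $x$ to $T(x)$.

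Next I would establish the pointwise Jacobian inequality
\begin{equation}\label{plan:jac}
J_t(x)^{1/n}\;\geq\;\tau^{(1-t)}_{k(n-1),n}(\theta)\,+\,\tau^{(t)}_{k(n-1),n}(\theta)\,J_1(x)^{1/n},\qquad \theta:=d(x,T(x)),
\end{equation}
at $\mathcal H^n$-a.e.\ $x$, where $J_t$ denotes the approximate Jacobian of $F_t$ defined on the regular set through approximate differentials in the Euclidean tangent cones. Once \eqref{plan:jac} is available, the change-of-variables identities $\rho_t(F_t(x))\,J_t(x)=\rho_0(x)$ and $\rho_1(T(x))\,J_1(x)=\rho_0(x)$, substituted into the definition of $S_n(\mu_t\mid\mathcal H^n)$ and integrated against $\mu_0$, convert \eqref{plan:jac} directly into \eqref{ineq:cd}; this last step is routine.

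The principal obstacle is establishing \eqref{plan:jac}: the ambient space is singular, so $J_t$ is not a classical Jacobian, and one must make sense of, and then bound from below, the rate at which $F_t$ distorts $n$-dimensional volume. The bound itself rests on the Jacobi/Riccati comparison for the geodesic flow: along $\gamma_x$, the Toponogov hinge inequality (i.e.\ the sectional curvature lower bound $\ge k$) controls the ``transverse Jacobi fields'' no worse than in the model space of constant curvature $k$, while the radial factor contributes a trivial $t^{1/n}$; the resulting factorization produces precisely the coefficient $\tau^{(t)}_{k(n-1),n}(\theta)=t^{1/n}\sigma^{(t)}_{k(n-1),n-1}(\theta)^{1-1/n}$. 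I expect the execution of this comparison on a singular space to be the most delicate step; the cleanest implementation is via the gradient flow of semi-concave functions of Perelman--Petrunin, applied to the Kantorovich potential $\phi$, which both identifies $F_t$ with a quasi-geodesic flow and imports the classical Jacobi inequality into the Alexandrov setting.
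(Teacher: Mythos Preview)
The paper does not prove this statement at all: Theorem~\ref{th:petrunincd} is stated with a citation to Petrunin's original paper \cite{palvs} and used as a black box throughout. So there is no ``paper's own proof'' to compare against.

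Your outline is a reasonable sketch of how the result is actually obtained in \cite{palvs}: one reduces the entropy inequality to a pointwise Jacobian bound along the optimal transport interpolation, and the latter is extracted from the Alexandrov comparison. A few cautions, though. First, the existence of the optimal map and the a.e.\ differentiability of the potential on an Alexandrov space (Bertrand) is fine, but the change-of-variables formula for $\mathcal H^n$ under $F_t$ on a singular space is itself nontrivial and needs an argument; you pass over it as ``routine,'' but justifying $\rho_t(F_t(x))J_t(x)=\rho_0(x)$ with $J_t$ an \emph{approximate} Jacobian requires a weak Jacobian/area formula in Alexandrov spaces. Second, your heuristic for \eqref{plan:jac} via ``Jacobi/Riccati comparison'' and a clean radial/transverse factorization is morally right but is not how Petrunin actually proceeds: there is no smooth Jacobi equation along geodesics in an Alexandrov space, and the gradient flow of $\phi$ is not the geodesic interpolation $F_t$. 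Petrunin's proof instead establishes a concavity-type inequality for the ``volume element'' along geodesics directly from Toponogov comparison and the second-variation/parallel transport machinery for Alexandrov spaces, bypassing any Riccati ODE. So while your plan points in the right direction, the step you flag as ``most delicate'' is indeed where all the work lies, and the tools you name (Perelman--Petrunin gradient flow) are not quite the ones that carry it; you would need to invoke Petrunin's second-variation estimate or an equivalent comparison argument to close the gap.
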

\begin{corollary}\label{cor:rcd}
Let $(X,d)$ be an $n$-dimensional Alexandrov space with curvature bounded from below by $\kappa$. Then, the metric measure space $(X,d,\mathcal H^n_X)$ satisfies the condition $RCD(\kappa(n-1),n)$.
\end{corollary}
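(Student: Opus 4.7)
The plan is to assemble the $RCD(\kappa(n-1),n)$ condition from two separate ingredients. Theorem \ref{th:petrunincd} already supplies the curvature-dimension bound $CD(\kappa(n-1),n)$ for $(X,d,\mathcal H^n_X)$, so the entire content of the corollary lies in upgrading this to the Riemannian version, i.e.~in establishing \emph{infinitesimal Hilbertianity}: the Cheeger energy $\Ch$ on $(X,d,\mathcal H^n_X)$ must be a quadratic form on its domain $W^{1,2}(X,d,\mathcal H^n_X)$.

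To prove infinitesimal Hilbertianity, I would exploit the almost-everywhere smooth structure of Alexandrov spaces. By the Otsu--Shioya (and Burago--Gromov--Perelman) theorem, the set $X_{\mathrm{reg}}$ of regular points, where the tangent cone is isometric to $\mathbb{R}^n$, has full $\mathcal H^n$-measure and carries a natural measurable Riemannian metric $g$ whose volume form coincides with $\mathcal H^n$. In particular, for any Lipschitz function $f:X\to\mathbb{R}$ the differential $df$ is defined $\mathcal H^n$-a.e.\ and $f\mapsto |df|_g^2$ is pointwise quadratic. The decisive step is to show that for every Lipschitz $f$,
\begin{equation*}
\Ch(f)\;=\;\tfrac{1}{2}\int_X |df|_g^2\,d\mathcal H^n_X,
\end{equation*}
after which the quadraticity of $\Ch$ on all of $W^{1,2}$ follows from the density of Lipschitz functions, itself a consequence of the local $(1,2)$-Poincar\'e and doubling properties implied by $CD(\kappa(n-1),n)$.

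The main obstacle in this scheme is the pointwise identification of the minimal weak upper gradient $|\nabla f|$ with $|df|_g$ $\mathcal H^n$-a.e. I would handle this in two stages: first, by Cheeger's theorem applied to the doubling/Poincar\'e space $(X,d,\mathcal H^n_X)$, one has $|\nabla f|=\lip f$ $\mathcal H^n$-a.e.; second, on the regular set $X_{\mathrm{reg}}$ a blow-up argument in the Euclidean tangent cone identifies $\lip f$ with $|df|_g$. Alternatively, one can shortcut these steps by appealing directly to the construction of Kuwae--Machigashira--Shioya of a canonical quadratic, strongly local, regular Dirichlet form on $(X,d,\mathcal H^n_X)$ and identifying it with $\Ch$ through the Ambrosio--Gigli--Savar\'e machinery. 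Combining either identification with Petrunin's $CD(\kappa(n-1),n)$ gives the desired $RCD(\kappa(n-1),n)$.
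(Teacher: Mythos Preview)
Your proposal is correct and follows essentially the same route as the paper: both reduce to showing the Cheeger energy is quadratic, and both do this by combining the $\mathcal H^n$-a.e.\ Euclidean tangent structure (Burago--Gromov--Perelman / Otsu--Shioya), the doubling and Poincar\'e properties of Alexandrov spaces, and Cheeger's Lipschitz differentiability theorem to identify the minimal weak upper gradient with the pointwise differential. The paper's proof is terser---it defers the mechanics to \cite[Section~6]{Kap-Ket-18}---whereas you spell out the identification $|\nabla f|=\lip f=|df|_g$ and also mention the Kuwae--Machigashira--Shioya Dirichlet form as an alternative shortcut, but the underlying argument is the same.
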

\begin{proof}
The statement is known. Here, we give a straightforward argument for completeness.  It is enough to show that Cheeger energy is quadratic.

It is known that 
a doubling condition and a 1-1 Poincar\'e inequality hold for Alexandrov spaces. 
Hence, we can follow the same argument as in \cite[Section 6]{Kap-Ket-18}. 
{It is known~\cite{BGP} that
for  $\mathcal H^n$-a.e. points $x\in X$ the tangent cone $T_pX$ is isometric to $\R^n$, and for a Lipschitz function the differential exists and is linear $\mathcal H^n$-a.e. \cite[Theorem 8.1]{cheegerlipschitz}. This implies the Cheeger energy is quadratic by the same argument as in \cite[Section 6]{Kap-Ket-18}.  }
\end{proof}
Let $(X,d)$ be an $n$-dimensional  Alexandrov space. We denote with $T_pX$ the  unique blow up tangent cone at $p\in X$. The tangent cone $T_pX$ coincides with the metric cone $C(\Sigma_p)$ where $\Sigma_p$ is the space of directions at $p$ equipped with the angle metric. The definition of the angle metric is as follows. The angle $\angle(\gamma^1,\gamma^2)$ between two geodesics $\gamma^i$, $i=1,2$, with $\gamma^1(0)=\gamma^2(0)=p$  and  parametrized by arclength is defined by the formula
\begin{align*}
\cos \angle(\gamma^1,\gamma^2)=\lim_{s,t\rightarrow 0} \frac{s^2+t^2-d(\gamma^1(s),\gamma^2(t))}{2st}.
\end{align*}
Then, the space of directions $\Sigma_pX$ is given as the metric completion of  $S_pX$ via $\angle$ where $S_pX$ is the space of geodesics starting in $p$. We refer to \cite{bbi} for details.
One can show that $(\Sigma_pX,\angle)$ is an $(n-1)$-dimensional Alexandrov space with curvature bounded below by $1$.
We say $p\in X$ is a regular point if $\Sigma_pX=\mathbb S^{n-1}$. We denote the set of regular points with $X^{reg}$.
As was mentioned above $\mathcal H^n$-almost every point $p\in X$ is regular. A theorem of Petrunin \cite{petpar} is the next statment. If $\gamma:[a,b]\rightarrow X$ is a geodesic such that there exists $t_0\in [a,b]$ with $\gamma(t_0)=X^{reg}$ then $\gamma(t_0)\in X^{reg}$ for all $t\in [a,b]$.

One can define the boundary $\partial X\subset X$ of $X$ via induction over the dimension.
One says that $p\in X$ is a boundary point if $\partial \Sigma_pX\neq \emptyset$. $\partial X$ denotes the set of all boundary points, and we call $\partial X$ the boundary of $X$.

Let $p\in X$ be a boundary point, that is $\partial \Sigma_p X\neq \emptyset$. We say $v\in \Sigma_p X$ is a normal vector in $p$ if $\angle(v,w)=\frac{\pi}{2}$ for any $w\in \partial \Sigma_p X$.

\begin{theorem}[Perelman \cite{Per-Morse},{ \cite[Lemma 4.3]{Per-Pet}} ]
For any point in an $n$-dimensional Alexandrove space there exists an arbitrary small, closed, geodesically convex neighborhood.
\end{theorem}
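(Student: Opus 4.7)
My plan is to reduce the claim to the existence, for each scale $\varepsilon>0$, of a \emph{strictly concave} function $f:B_\varepsilon(p)\to\R$ having $p$ as its unique maximum, and then to extract the convex neighborhoods as super-level sets. More precisely, I would seek $f$ for which the distributional inequality $(f\circ\gamma)''\le -c$ (with a uniform $c>0$) holds along every unit-speed geodesic $\gamma$ inside $B_\varepsilon(p)$, together with $f(p)>f(x)$ for every $x\in B_\varepsilon(p)\setminus\{p\}$.

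Granted such an $f$, I would consider the super-level sets $U_t:=\{x\in B_{\varepsilon/2}(p):f(x)\ge t\}$. Compactness of $\overline{B_{\varepsilon/2}(p)}$ together with the strict-maximum property forces $\diam(U_t)\to 0$ as $t\uparrow f(p)$, so for $t$ sufficiently close to $f(p)$ any minimizing geodesic $\gamma$ between two points of $U_t$ stays well inside $B_\varepsilon(p)$. On such a geodesic, $f\circ\gamma$ is concave, hence
\begin{equation*}
f(\gamma(s))\ \ge\ (1-s)\,f(\gamma(0))+s\,f(\gamma(1))\ \ge\ t,
\end{equation*}
so $\gamma\subset U_t$. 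Each $U_t$ is therefore closed and geodesically convex, and the family $\{U_t\}_{t\uparrow f(p)}$ furnishes the desired basis of arbitrarily small closed convex neighborhoods of $p$.

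The nontrivial step is the construction of $f$, which is Perelman's. Following his recipe I would pick a finite configuration $\{q_1,\dots,q_N\}$ at a common small distance from $p$, arranged so that the directions $\uparrow_p^{q_i}\in\Sigma_p X$ form a sufficiently fine net in $\Sigma_p X$, and take a suitably averaged combination of the functions $-\md_\kappa(d(q_i,\cdot))$ augmented by a linear correction chosen so that $p$ becomes a critical point. The Alexandrov comparison provides an upper bound on each $[\md_\kappa(d_{q_i}\circ\gamma)]''$, and the density of the directional net together with the first-variation formula at $p$ guarantees that the summed contribution to $(f\circ\gamma)''$ is bounded above by a fixed negative constant \emph{uniformly in the direction} of $\gamma$, throughout a full neighborhood of $p$.

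The hard part will be carrying this construction out quantitatively: selecting the scale, the directional net, and the linear correction so that the resulting $f$ is genuinely strictly concave on an entire neighborhood of $p$ (not merely at $p$ itself), and so that $p$ remains the strict maximum after the correction. This requires the differential analysis on the space of directions $\Sigma_p X$ — notably a quantitative form of the first-variation formula — and is precisely the content of \cite{Per-Morse} and \cite[Lemma~4.3]{Per-Pet}, which the statement of the theorem cites.
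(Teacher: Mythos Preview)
The paper does not give a proof of this theorem; it merely cites it as a known result of Perelman, with references to \cite{Per-Morse} and \cite[Lemma~4.3]{Per-Pet}. Your sketch is a faithful outline of Perelman's original argument (strictly concave function built from averaged distance functions to a dense directional net, with convex neighborhoods extracted as super-level sets), so there is nothing in the paper to compare it against beyond the citations you already acknowledge.
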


\subsection{Gluing}\label{subsec:gluing}
Let $(X_0,d_{X_0})$ and $(X_1,d_{X_1})$ be complete, $n$-dimensional Alexandrov spaces with non-empty boundaries $\partial X_0$ and $\partial X_1$ equipped with their intrinsic distances $d_{\partial X_0}$ and $d_{\partial X_1}$ respectively. Let $\mathcal I:\partial X_0\rightarrow \partial X_1$ be an isometry.

The topological glued space of $X_0$ and $X_1$ along their boundaries w.r.t. $\mathcal I$ is defined as the quotient space $X_0\dot \cup X_1/R$ of the disjoint union $X_0\dot \cup X_1$ where 
\begin{align*}
x\sim_ R y \mbox{ if and only if }\mathcal I(x)=y\mbox{ if }x\in \partial X_0, y\in \partial X_1, \mbox{ and }x=y\mbox{  otherwise.}
\end{align*}
The equivalence relaiton $R$ induces a pseudo distance on $X_0\dot\cup X_1=X$ as follows.  First, we introduce an extended metric $d$ on $X_0\dot \cup X_1$ via $d(x,y)=d_{X_i}(x,y)$ if $x,y\in X_i$ for some $i\in \{0,1\}$ and $d(x,y)=\infty$ otherwise.
Then, for $x,y\in X_0\dot \cup X_1$ we define
{
\begin{align*}
\hat d(x,y) = \inf  \sum_{i=0}^{k-1} d(p_{i},q_i)
\end{align*}
where the infimum runs over all collection of tuples 
 $\{(p_i,q_{i})\}_{i=0,\dots ,k-1}\subset X \times X$ for some $k\in N$
 such that $q_i\sim_R p_{i+1}$, for all $i=0,\dots, k-1$ and $x=p_0, y=q_k$.} One can show that $x\sim_R y$ if and only if $\hat d(x,y)=0$ if $X_0$ and $X_1$ are Alexandrov spaces.
The glued space between $(X_0,d_{X_0})$ and $(X_1,d_{X_1})$ w.r.t. $\mathcal I:\partial X_0 \rightarrow \partial X_1$ is the metric space defined as 
$$X_0\cup_{\mathcal I} X_1:=(X_0\cup X_1/R, \hat{d}).$$
In the following we denote the glued space as $(Z,d_Z)$, and boundary $\partial X_0$ with its intrinsic metric with $(Y,d_Y)$.
In the case when $X_0=X_1=X$ and $\mathcal I=\mbox{id}_{\partial X}$, we call $X\cup_{\mathcal I}X=:\hat X$ the double space of $X$.
\begin{remark}\label{rem:local}  For every point $p\in X_i\backslash Y$, $i=0,1$, there exists $\epsilon>0$ such that $B_{\epsilon}(p)\subset X_i$ and $d_Z|_{B_\epsilon(p)\times B_\epsilon(p)}= d_{X_i}|_{B_\epsilon(p)\times B_\epsilon(p)}$.
\end{remark}
\begin{theorem}[Petrunin, \cite{pg}]\label{th:petruningluing}
Let $(X_0,d_{X_0})$ and $(X_1,d_{X_0})$ be $n$-dimensional Alexandrov spaces with nonempty boundary and curvature bounded from below by $k$. Let $\mathcal I:\partial X_0\rightarrow \partial X_1$ be an isometry w.r.t. the induces intrinsic metrics. 
Then, $X_0\cup_{\mathcal I} X_1$ is an Alexandrov space with curvature bounded from below by $k$.
\end{theorem}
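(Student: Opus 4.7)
The plan is to verify the local angle-comparison condition characterizing Alexandrov curvature $\geq k$ in a neighborhood of every point of the glued space $Z := X_0 \cup_{\mathcal I} X_1$ and then apply Toponogov globalization. By Remark \ref{rem:local}, every point $p \in Z$ lying outside the gluing locus $Y = \mathcal I(\partial X_0)$ has a neighborhood isometric to an open set in one of the factors $X_i$, where the comparison condition is hypothesized. The entire difficulty is concentrated along $Y$, so it suffices to verify the four-point (or quadruple) comparison in a small ball around an arbitrary point $p\in Y$.

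I would proceed in two stages. The first stage is to establish Perelman's doubling theorem as an independent statement: for a single Alexandrov space $X$ of curvature $\geq k$ with non-empty boundary, the double $\hat X$ is Alexandrov of curvature $\geq k$. The proof should combine two ingredients. First, a one-sided $k$-concavity of the distance function $d(\cdot, \partial X)$ near $\partial X$, derivable from the first variation formula together with the existence of normal directions $v$ at every boundary point. Second, an induction on the dimension: at $p \in \partial X$ the space of directions $\Sigma_p X$ is itself an Alexandrov space of curvature $\geq 1$ with non-empty boundary $\partial \Sigma_p X$, and by induction its double is curvature $\geq 1$, which models $\Sigma_p \hat X$. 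With these tools in hand, one verifies the quadruple comparison in $\hat X$ by reflecting any trial minimizing geodesic across $\partial X$ whenever it meets the boundary, and comparing the unfolded configuration with its image in the simply connected model space of constant curvature $k$.

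The second stage is to deduce the theorem from the doubling case. The local model at a point $p\in Y$ is the gluing of the half-tangent-cones $T_p X_0$ and $T_p X_1$ along the isometrically identified boundary hyperplanes $\partial \Sigma_p X_0 \cong \partial \Sigma_p X_1$ (an identification provided by the differential of $\mathcal I$). By induction on dimension applied to the Alexandrov spaces of directions, this tangent picture is already curvature $\geq 1$; passing to the cone gives the local curvature $\geq 0$ structure near $p$ in $Z$, which combined with the quantitative $\md_k$-concavity from the two sides yields the global bound $\geq k$. Triangles in $Z$ whose sides cross $Y$ are handled by an unfolding argument: at a crossing point the two sub-geodesics come in along normal directions from the two sides, and thanks to the boundary isometry and the convexity of $Y$ inherited from Petrunin's concavity of $d(\cdot, \partial X_i)$, the total concatenation has angle $\leq \pi$, permitting direct comparison with the model triangle.

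The main obstacle, as I see it, is controlling the behavior of geodesics in $Z$ that meet or interact with $Y$ in complicated ways—segments that travel along $Y$, or which touch and leave $Y$ in a Cantor-like set of times. One needs to rule out pathological branching at gluing points, and to show that the unfolding argument still produces a well-defined comparison configuration in the model space. This is precisely the issue the authors themselves flag when they observe that ``it is not known whether geodesics cross the boundary only finitely many times.'' My expectation is that the cleanest way around this is Petrunin's technique of exhibiting strictly concave functions on small neighborhoods via Perelman's geodesically convex neighborhood theorem and running the associated gradient flow to push configurations off $Y$, thereby reducing the verification of comparison to triangles whose sides meet $Y$ only transversally and in a controlled way.
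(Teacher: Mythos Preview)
The paper does not prove this theorem. Theorem~\ref{th:petruningluing} is stated as a cited result of Petrunin~\cite{pg} and is used as a black box throughout; no argument for it is supplied. There is therefore nothing in the paper to compare your proposal against.

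For what it is worth, your outline is broadly in the spirit of Petrunin's original argument: the reduction to a local statement near the gluing locus, the induction on dimension via spaces of directions, and the acknowledgment that the main difficulty lies in controlling geodesics that may intersect the boundary in complicated ways. However, your second stage is too vague to count as a proof sketch. Saying that the tangent cone has curvature $\geq 0$ and that this ``combined with the quantitative $\md_k$-concavity from the two sides yields the global bound $\geq k$'' skips the actual work; the tangent cone being nonnegatively curved does not by itself imply local curvature $\geq k$ in the base space. Likewise, the unfolding argument you describe for crossing geodesics presupposes a single transversal crossing, which is exactly the case you yourself flag as problematic. Petrunin's actual proof does not proceed by reducing to the doubling theorem as you suggest; it works directly with a delicate approximation argument. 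If you want to supply a proof here, you should either reproduce Petrunin's argument from~\cite{pg} or simply cite it, as the authors do.
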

\begin{remark}
The special case of a double space was proven first by Perelman \cite{p}.
\end{remark}
\begin{remark}
By symmetry of the construction one can see that geodesics in the double space $\hat X$ of an Alexandrov space $X$ { connecting points in $\hat X\backslash \partial X$} intersect with the boundary at most once, and the restriction of the double metric to $X\backslash \partial X$ coincides with $d_X$. This observation was crucial in Perelman's proof of the double theorem. 
{However, in the general case of glued spaces it's not clear if geodesics connecting points in $Z\backslash Y$ intersect $Y$ at most finitely many times. This creates an extra difficulty in the proof of Petrunin's theorem and also in the proof of Theorem~\ref{main}. }
\end{remark}

Let us recall some additional facts about the glued space $Z$ \cite{pg}. Since the boundary $Y\subset X_0$ is an extremal subset in $X_0$, the following holds. Consider the blow up tangent cone $\lim_{\epsilon\rightarrow 0} (X_0, \frac{1}{\epsilon} d_{X_0}, p) =T_p X_0$ for $p\in Y$. Then, $\lim_{\epsilon\rightarrow 0}(Y, \frac{1}{\epsilon}d_Y, p)= T_p Y$ w.r.t. the intrinsic metric $d_Y$ on $Y$ is equal to $C(\partial \Sigma_p X_0)= \partial C(\Sigma_p X_0)$.

It follows that $\partial \Sigma_p X_0$  is isometric to $\partial \Sigma_p X_1$ via an isometry $\mathcal I'$ that arises as blow up limit of $\mathcal I$. 

Then it also follows from Petrunin's proof of the glued space theorem that $ T_p Z= T_p X_0\cup_{\mathcal I'}T_pX_1$ and  $\Sigma_p Z= \Sigma_pX_0 \cup_{\mathcal I'} \Sigma_p X_1$.

If $p\in Y$ is a regular point in the glued space $Z$, that is $\Sigma_pZ=\mathbb S^{n-1}$, it follows by maximality of the volume of $\mathbb S^{n-1}$ in the class of Alexandrov spaces with curvature bounded below by $1$ that $\Sigma_p X_0= \mathbb{S}^{n-1}_+$ and $\Sigma_pX_0=\mathbb S^{n-1}_-$ where $\mathbb{S}^{n-1}_{+/-}$ denote the lower and upper half sphere respectively, and $\Sigma_p Y= \partial \Sigma_p X_0=\mathbb S^{n-2}$. In particular, the north pole $N$ in $\mathbb S^{n-1}_+$ is the unique normal vector a $p\in Y\subset X_0$, and the south pole $S\in \mathbb{S}^{n-1}_-$ is the unique normal vector a $p\in Y\subset X_1$.
\subsection{Semi-concave functions}
\noindent
We recall a few basic facts about concave functions following \cite{plaut}.

A function
$u:[a,b]\rightarrow \R$ is called concave if the segment between any pair of points lies below the graph. If $u$ is concave, $u$ is lower semi continuous and continuous on $(a,b)$. The secant slope
$
\frac{u(s)-u(t)}{s-t}
$
is a decreasing function in $s$ and $t$. It follows that the right and left derivative
\begin{align*}
\frac{d^+}{dr} u (r)=\lim_{h\downarrow 0} \frac{u(r+h)-u(r)}{h}
 \ \ \&\ \ 
\frac{d^-}{dr} u (r)=\lim_{h\downarrow 0} \frac{u(r-h)-u(r)}{-h}
\end{align*}
exist in $\R\cup\{\infty\}$ and $\R\cup \{-\infty\}$ respectively
for all $r\in [a,b]$ with values in $\R$ if $r\in (a,b)$. Moreover $\frac{d^+}{dr}u(r)\leq \frac{d^-}{dr}u(r)$ and $\frac{d^{+/-}}{dr}u(r)$ are decreasing in $r$. 
If $\frac{d^+}{dr}u(a)<\infty$ ($\frac{d^-}{dr}u(b)>-\infty$), $u$ is continuous in $a$ (in $b$).

Let $u: [a,b]\rightarrow (0,\infty)$ satisfy
\begin{align}\label{kuconcavity1}
u\circ\gamma(t)\geq \sigma_{\kappa}^{(1-t)}(|\dot{\gamma}|)u\circ \gamma(0) + \sigma_{\kappa}^{(t)}(|\dot{\gamma}|)u\circ\gamma(1)
\end{align}
for any constant speed geodesic $\gamma:[0,1]\rightarrow [a,b]$. 
It follows that $u$ is lower semi continuous and continuous on $(a,b)$.

\begin{definition}
Let $f: [a,b]\rightarrow \R$ be continuous on $(a,b)$, and let $F:[a,b]\rightarrow \R$ such that
$F''=f$ on $(a,b)$.
For a function $u: [a,b]\rightarrow \R$ 
we write $u''\leq f$ on $(a,b)$ if $u-F$ is concave on $(a,b)$.
\end{definition}
We say a function $u:(0,\theta)\rightarrow \R$ is $\lambda$-concave if $u''\leq \lambda$. We say $u$ is semiconcave if for any $r\in (0,\theta)$ we can find $\epsilon>0$ and $\lambda\in \R$ such that $u$ is $\lambda$-concave on $(r-\epsilon,r+\epsilon)$.

If $u$ satisfies \eqref{kuconcavity1} for every constant speed geodesic $\gamma:[0,1]\rightarrow [a,b]$, then one can check that
\begin{align*}
u''+ku\leq 0\mbox{ on }(a,b)
\end{align*}
in the sense of the previous definition. We note that \eqref{kuconcavity1} implies that $u$ is continuous on $(a,b)$, and 
$U(t)=\int_a^bg(s,t) u(s) ds$ satisfies $U''=-u$ on $(a,b)$ where $g(s,t)$ is the Green function of the interval $[a,b]$.

On the other hand we have the next lemma. 
\begin{lemma}
Let $u:[a,b]\rightarrow \R$ be lower semi-continuous and continuous on $(a,b)$ such that $u''+ku\leq 0$ on $(a,b)$ in the sense of the definition above.

Then $u$ satisfies \eqref{kuconcavity1} for every constant speed geodesic $\gamma:[0,1]\rightarrow [a,b]$.
\end{lemma}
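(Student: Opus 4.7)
The plan is to reduce \eqref{kuconcavity1} to a one-variable ODE comparison along the chosen geodesic. Fix a constant speed geodesic $\gamma\colon[0,1]\to[a,b]$ with $\gamma(0)=x_0$, $\gamma(1)=x_1$, and $\theta:=|\dot\gamma|$. The case $\theta=0$ is trivial, and when $\kappa>0$ and $\theta\ge\pi_\kappa$ the statement is either vacuous or rendered trivial by the convention $\sigma_\kappa^{(t)}(\theta)=\infty$; so assume $\theta\in(0,\pi_\kappa)$. Let $v(t):=u(\gamma(t))$ and let $w$ denote the right-hand side of \eqref{kuconcavity1}; then $w$ is the unique $C^2$ solution of $w''+\kappa\theta^2 w=0$ on $(0,1)$ with $w(0)=u(x_0)$, $w(1)=u(x_1)$. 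Setting $\phi:=v-w$, it suffices to show $\phi\ge 0$ on $[0,1]$.

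The first step is to transfer the hypothesis from $u$ to $v$. If $u-F$ is concave on $(a,b)$ with $F''=-\kappa u$, then $v-F\circ\gamma=(u-F)\circ\gamma$ is concave on $(0,1)$ as a composition of a concave function with an affine map, while the chain rule gives $(F\circ\gamma)''=\theta^2\,F''\circ\gamma=-\kappa\theta^2 v$. Since $w$ itself solves $w''+\kappa\theta^2 w=0$, the difference $\phi-(F\circ\gamma-w)$ is concave on $(0,1)$ with $(F\circ\gamma-w)''=-\kappa\theta^2\phi$; equivalently, $\phi''+\kappa\theta^2\phi\le 0$ on $(0,1)$ in the sense of the preceding definition. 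Moreover $\phi$ is lower semi-continuous on $[0,1]$, continuous on $(0,1)$, and vanishes at both endpoints.

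I then argue by contradiction using a maximum principle. Suppose $\phi(t_\ast)<0$ for some $t_\ast\in(0,1)$, and let $(\alpha,\beta)$ be the connected component of $\{\phi<0\}\cap(0,1)$ containing $t_\ast$; lower semi-continuity at $\{0,1\}$ and continuity on $(0,1)$ force $\phi(\alpha)=\phi(\beta)=0$. Write $\phi''+\kappa\theta^2\phi=-\mu$ in the distributional sense, where $\mu\ge 0$ is a Radon measure on $(\alpha,\beta)$ arising as $\mu=-(\phi-H)''$ for a $C^2$ primitive $H$ of $-\kappa\theta^2\phi$. When $\kappa\le 0$, the function $H$ is itself concave since $-\kappa\theta^2\phi\le 0$ on $(\alpha,\beta)$, so $\phi=(\phi-H)+H$ is a sum of two concave functions and hence concave on $[\alpha,\beta]$; vanishing at the endpoints then gives $\phi\ge 0$, contradicting $\phi(t_\ast)<0$. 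When $\kappa>0$, the inequality $\beta-\alpha\le 1<\pi_\kappa/\theta=\pi/(\theta\sqrt\kappa)$ ensures that the first Dirichlet eigenvalue $(\pi/(\beta-\alpha))^2$ of $-\tfrac{d^2}{dt^2}$ on $(\alpha,\beta)$ strictly exceeds $\kappa\theta^2$, so the Green's function $g(s,t)$ of $-\tfrac{d^2}{dt^2}-\kappa\theta^2$ with Dirichlet boundary is strictly positive, and duality yields the representation $\phi(t)=\int_\alpha^\beta g(s,t)\,d\mu(s)\ge 0$, again a contradiction.

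The main technical obstacle lies in rigorously justifying the Green's function representation in the $\kappa>0$ case at the low regularity we have: $\phi$ is merely continuous and $\phi''+\kappa\theta^2\phi$ is only a Radon measure. This is handled by a standard integration-by-parts argument against $g(\cdot,t)$ (valid because the concave function $\phi-H$ admits one-sided derivatives with well-controlled behavior at the endpoints $\alpha,\beta$ where $\phi-H$ is continuous), or alternatively by smoothing $\phi-H$ via sup-convolutions and passing to the limit in the classical representation formula.
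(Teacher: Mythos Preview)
Your argument is correct. The reduction to $\phi:=v-w$ with $\phi''+\kappa\theta^2\phi\le 0$ in the sense of the definition, the endpoint analysis (including the case $\alpha=0$ or $\beta=1$, where lower semi-continuity together with $\phi(0)=\phi(1)=0$ forces the interior limit to vanish), and the case split $\kappa\le 0$ versus $\kappa>0$ all go through. The Green's function step for $\kappa>0$ is the only place requiring care, and your remarks about handling it via the one-sided derivatives of the concave function $\phi-H$ (or via sup-convolution) are adequate.

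The paper takes a much shorter route: it observes that the hypothesis $u''+\kappa u\le 0$ (in the ``$u-F$ concave'' sense) immediately yields the distributional inequality
\[
\int u\,\varphi''\,dt+\kappa\int u\,\varphi\,dt\le 0\qquad\text{for all }\varphi\in C^2_c((a,b)),\ \varphi\ge 0,
\]
via the distributional characterisation of concavity, and then simply invokes \cite[Lemma 2.8]{erbarkuwadasturm} to pass from the distributional differential inequality to the integrated comparison \eqref{kuconcavity1}. Your approach is genuinely different: you essentially \emph{reprove} that cited lemma by a direct maximum-principle/Sturm-comparison argument on the composed function $\phi$. What you gain is a self-contained proof with no external black box; what you pay is length and the low-regularity justification of the Green representation, which the paper sidesteps entirely by outsourcing the ODE comparison.
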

\begin{proof}
We sketch the proof. If $u''+k u\leq 0$ then $u-k U$  is concave. In particular, it follows for $\phi\in C^{2}_c((a,b))$, $\phi\geq 0$, that
\begin{align*}
0\geq \int (u+kU)\phi ''dt = \int u \phi'' + k\int u\phi dt
\end{align*}
by the distributional characterisation of convexity (see \cite{simonconvexity}).
Hence, $u$ satisfies $u''+k u\leq 0$ in distributional sense, and therefore \eqref{kuconcavity1} follows by \cite[Lemma 2.8]{erbarkuwadasturm}. 
\end{proof}
\begin{lemma}\label{lem:local}
If $u$ satisfies \eqref{kuconcavity1} for every constant speed geodesic $\gamma:[0,1]\rightarrow [a,b]$ of length less than $\theta<b-a$, then $u$ satisfies \eqref{kuconcavity1}.
\end{lemma}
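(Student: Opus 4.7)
The strategy is to pass through the equivalent ODE-type formulation $u''+\kappa u\leq 0$ (in the sense defined above the previous lemma), exploiting the fact that this condition is intrinsically local while the integral inequality \eqref{kuconcavity1} is a priori global. Concretely, I would argue: hypothesis $\Longrightarrow$ $u''+\kappa u\leq 0$ locally at every point of $(a,b)$ $\Longrightarrow$ $u''+\kappa u\leq 0$ on all of $(a,b)$ $\Longrightarrow$ \eqref{kuconcavity1} for arbitrary geodesics, where the last step is exactly the preceding lemma.

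For the first step, fix $r\in(a,b)$ and pick $\varepsilon>0$ with $[r-\varepsilon,r+\varepsilon]\subset(a,b)$ and $2\varepsilon<\theta$. Every constant speed geodesic $\gamma:[0,1]\to[r-\varepsilon,r+\varepsilon]$ has length $|\dot\gamma|\leq 2\varepsilon<\theta$, so the hypothesis gives \eqref{kuconcavity1} for such $\gamma$. By the discussion preceding the previous lemma (which establishes that \eqref{kuconcavity1} on all geodesics of a closed interval implies the ODE inequality on its interior, via the Green function representation $U(t)=\int g(s,t)u(s)\,ds$), we obtain $u''+\kappa u\leq 0$ on the open interval $(r-\varepsilon,r+\varepsilon)$, i.e.\ $u-F_r$ is concave on $(r-\varepsilon,r+\varepsilon)$, where $F_r$ is chosen so that $F_r''=\kappa u$ there.

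For the second step, fix once and for all a function $F$ on $(a,b)$ with $F''=\kappa u$; such $F$ exists because $u$ is continuous on $(a,b)$ (it is continuous on the open interval by the remark following \eqref{kuconcavity1}). On each interval $(r-\varepsilon,r+\varepsilon)$ the difference $F-F_r$ is affine, hence $u-F$ is concave on every such neighborhood. Because concavity of a continuous function is a local property on an interval (any chord inequality on $[s,t]\subset(a,b)$ follows by subdividing $[s,t]$ into finitely many pieces, on each of which local concavity holds, and iterating the two-point concavity inequality; equivalently, one checks $(u-F)''\le 0$ in the distributional sense, which is manifestly local), it follows that $u-F$ is concave on the whole of $(a,b)$. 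Thus $u''+\kappa u\leq 0$ on $(a,b)$ in the sense of the definition above.

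Finally, $u$ is lower semi-continuous on $[a,b]$ and continuous on $(a,b)$ by the remarks made for any function satisfying \eqref{kuconcavity1} on short geodesics in particular, so the hypotheses of the preceding lemma are met and it yields \eqref{kuconcavity1} for every constant speed geodesic $\gamma:[0,1]\to[a,b]$, regardless of length. The only subtle point is the patching argument in the second step; but this is standard because the obstruction to concavity of a continuous function can always be localized to a single chord, which by compactness lies inside finitely many of the small intervals on which local concavity is already known.
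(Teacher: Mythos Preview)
The paper states Lemma~\ref{lem:local} without proof, presumably regarding it as an immediate consequence of the surrounding discussion (the equivalence between \eqref{kuconcavity1} and the ODE inequality $u''+ku\le 0$, established just before and in the preceding lemma). Your argument makes this explicit and is correct: you pass from the local hypothesis to the manifestly local condition $u''+\kappa u\le 0$, patch concavity of $u-F$ using that concavity of a continuous function on an interval is a local property, and then invoke the preceding lemma to recover the global inequality \eqref{kuconcavity1}. The only points worth noting are that continuity of $u$ on $(a,b)$ and lower semicontinuity at the endpoints indeed follow by applying the remark after \eqref{kuconcavity1} on each short subinterval, and that the patching step is cleanest via the distributional characterization $(u-F)''\le 0$, which is local by definition; both of these you already addressed.
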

If $u:[a,b]\rightarrow \R$ satifies \eqref{kuconcavity1}, it is is semi-concave, therefore locally Lipschitz on $(a,b)$ and hence differentiable $\mathcal{L}^1$-almost everywhere.
Moreover, the right and left derivative also exist in this case and satisfy $\frac{d^+}{dr}u(r)\leq \frac{d^-}{dr}u(r)$ with equality if and only if $u$ is differentiable in $r$.  $\frac{d^{+/-}}{dr}u$ is continuous from the right/left.
Since $u$ is locally semi concave, the second derivative $u''$ exists $\mathcal L^1$-almost everywhere.

The following Lemma can be found in \cite{plaut} (Lemma 113). 
\begin{lemma}\label{importantlemma}
Consider $u: (a,b)\rightarrow \R$ continuous such that $u''\leq - ku$ on $(a,c)$ and on $(c,b)$ for some $c\in (a,b)$.
Then $u''\leq -k u$ on $(a,b)$ if and only if 
\begin{align*}
\frac{d^-}{dr} u (c)\geq \frac{d^+}{dr} u (c).
\end{align*}
\end{lemma}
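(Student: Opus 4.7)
The plan is to convert the differential inequality into a pure concavity statement by subtracting a suitable smooth function. Since $u$ is continuous on $(a,b)$, there exists $F \in C^2((a,b))$ with $F'' = -ku$ (any twice-integrated antiderivative works). Set $v := u - F$. By the definition of the inequality $u''\leq -ku$ recalled just before the lemma, this inequality on a subinterval $I \subset (a,b)$ is equivalent to concavity of $v$ on $I$. In particular $v$ is continuous on $(a,b)$ and concave on each of $(a,c)$ and $(c,b)$. Since $F$ is of class $C^1$, one has $\frac{d^\pm v}{dr}(c) = \frac{d^\pm u}{dr}(c) - F'(c)$, so the derivative hypothesis on $u$ at $c$ is equivalent to the analogous one for $v$. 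The lemma therefore reduces to the following elementary claim: a continuous function $v$ on $(a,b)$, concave on $(a,c)$ and on $(c,b)$, is concave on $(a,b)$ if and only if $\frac{d^-v}{dr}(c) \geq \frac{d^+v}{dr}(c)$.

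For this reduced claim, necessity is immediate from the relation $\frac{d^-v}{dr}(r) \geq \frac{d^+v}{dr}(r)$ valid at every interior point of a concave function, as recalled in the preliminaries. For sufficiency I would use the standard criterion that a continuous function on an open interval is concave if and only if its right derivative $\frac{d^+v}{dr}$ is non-increasing. By the preliminaries, $\frac{d^+v}{dr}$ is non-increasing on each of $(a,c)$ and $(c,b)$. Further, monotonicity of the secant slopes of $v$ on $(a,c)$, together with continuity of $v$ at $c$, yields $\lim_{r \uparrow c}\frac{d^+v}{dr}(r) = \frac{d^-v}{dr}(c)$. Hence the hypothesis precisely eliminates any upward jump of $\frac{d^+v}{dr}$ at $c$, showing that $\frac{d^+v}{dr}$ is non-increasing on all of $(a,b)$, and therefore that $v$ is concave there.

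The main obstacle, though minor, is keeping track of the possibility that the one-sided derivatives of $v$ at $c$ could be infinite and handling the limit $\lim_{r \uparrow c}\frac{d^+v}{dr}(r) = \frac{d^-v}{dr}(c)$ in the extended reals; this is routine bookkeeping on monotone secant slopes. A slick alternative is to phrase the gluing distributionally: the distributional second derivative of $v$ on $(a,b)$ decomposes as the non-positive measures coming from concavity on $(a,c)$ and $(c,b)$ together with a point mass of magnitude $\frac{d^+v}{dr}(c) - \frac{d^-v}{dr}(c)$ at $c$, and this total is a non-positive measure precisely under the stated hypothesis.
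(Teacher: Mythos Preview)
Your proof is correct. The paper does not actually prove this lemma; it simply cites it as Lemma~113 in \cite{plaut}. Your argument is the natural one given the paper's definition of $u''\le f$ (namely, concavity of $u-F$ for $F''=f$): the subtraction of $F$ reduces the statement to the elementary gluing lemma for concave functions, which you handle cleanly via monotonicity of one-sided derivatives. The distributional alternative you mention at the end is also valid and equally short.
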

\begin{definition}\label{def:semiconcavefct}
Let $(X,d)$ be an $n$-dimensional Alexandrov space and $\Omega\subset X$. A function $f:\Omega\rightarrow \R$ is $\lambda$-concave if $f$ is {\bf locally Lipschitz} and $f\circ\gamma:[0,\mbox{L}(\gamma)]\rightarrow \R$ is $\lambda$-concave for every constant speed geodesic $\gamma:[0, \mbox{L}(\gamma)]\rightarrow \Omega$. A function $f:X\rightarrow \R$ is semi-concave if for every $p\in X$ there exists a neighborhood $U\ni p$ such that $f|_U$ is { $\lambda$-concave for some real $\lambda$.}

We say a function $f: X\rightarrow \R$ is double semi-concave if the function $f\circ P:\hat X\rightarrow \R$ is semi-concave
where the $P: \hat X \rightarrow X$ is the projection map form the double space $\hat X$ to $X$. If $\partial X=\emptyset$, concavity and double concavity coincide. 

In \cite{petsem} Petrunin defines concavity as double concavity. 

\end{definition}\label{def:semiconcavefcts}
Let $X$ be an Alexandrov space and let $f: X \rightarrow \R$ be locally Lipschitz. Then, the limit
\begin{align*}
\lim_{r\downarrow 0} \frac{f\circ \gamma(r) - f\circ \gamma(0)}{r}=\frac{d^+}{dr}( f\circ \gamma)(0)=: df_p(\dot{\gamma})=:df(\dot{\gamma}) \in \R
\end{align*}
exists for every geodesic $\gamma:[0,\theta]\rightarrow X$ parametrized by arc length with $\gamma(0)=p$, and for every $p\in X$.
We call $df: T_pX\rightarrow \R$ the differential of $f$. 

The differential $df_p$ on $T_pX$ can be equivalently  defined as limit of the sequence $\frac{1}{\epsilon} \left(f-f(p)\right):( \frac{1}{\epsilon}X,p) \rightarrow \R$. This limit is understood in the sense of Gromov's Arzela-Ascoli theorem (see for instance \cite{sf}.  { It also makes  sense for functions that are just locally Lipschitz but the differential is not unique in this case}. {Note that since Alexandrov spaces are nonbranching,  under GH convergence of Alexandrov spaces every geodesic in the limit is a limit of geodesics in the sequence. Therefore it follows that  $df_p:T_pX\rightarrow \R$ is Lipschitz for Lipschitz functions, and also concave if $f$ is semiconcave. This in turn implies that $v=df_p:\Sigma_p\rightarrow \R$ satisfies $v''+v\leq 0$ along geodesics in $\Sigma_p$.}

\subsection{$1D$ localisation of generalized Ricci curvature bounds.}\label{subsec:1Dlocalisation}
\noindent
In this section we will recall the localisation technique introduced by Cavalletti and Mondino. The presentation follows Section 3 and 4 in \cite{cavmon}. We assume familarity with basic concepts in optimal transport.

Let $(X,d,\m)$ be a locally compact metric measure space that is essentially nonbranching. We  assume that $\supp\m =X$.

Let $u:X\rightarrow \mathbb{R}$ be a $1$-Lipschitz function. Then 
\begin{align*}
\Gamma_u:=\{(x,y)\in X\times X : u(x)-u(y)=d(x,y)\}
\end{align*}
is a  $d$-cyclically monotone set, and one defines $\Gamma_u^{-1}=\{(x,y)\in X\times X: (y,x)\in \Gamma_u\}$. 
If $\gamma\in \mathcal G^{[a,b]}(X)$ for some $[a,b]\subset\R$ such that $(\gamma(a),\gamma(b))\in \Gamma_u$ then $(\gamma(t),\gamma(t))\in \Gamma_u$ for $a<t\leq s<b$.  It is therefore natural to consider the set $G$ of unit speed transport geodesics $\gamma:[a,b]\rightarrow \R$ such that $(\gamma(t),\gamma(s))\in \Gamma_u$ for $a\leq t\leq s\leq b$. 

The union $\Gamma\cup \Gamma^{-1}$ defines a relation $R_u$ on $X\times X$, and $R_u$ induces a {\it transport set with endpoints} 
$$\mathcal T_u:= P_1(R_u\backslash \{(x,y):x=y\})\subset X$$ 
where $P_1(x,y)=x$. For $x\in \T_u$ one defines $\Gamma_u(x):=\{y\in X:(x,y)\in \Gamma_u\}, $
and similar $\Gamma_u^{-1}(x)$ as well as $R_u(x)=\Gamma_u(x)\cup \Gamma_u^{-1}(x)$. Since $u$ is $1$-Lipschitz, 
 $\Gamma_u, \Gamma_u^{-1}$ and $R_u$ are closed as well as $\Gamma_u(x), \Gamma_u^{-1}(x)$ and $R_u(x)$.

The {\it transport set without branching} $\mathcal T^b_u$ associated to $u$ is then defined as 
\begin{align*}
\mathcal T^b_u=\left\{ x\in \mathcal T_u: \forall  y,z\in R_u(x) \Rightarrow  (y,z)\in R_u\right\}
\end{align*}
$\T_u$ and $\T_u\backslash \T^b_u$ are $\sigma$-compact, and $\T^b_u$ and $R_u\cap \T^b_u\times \T^b_u$ are Borel sets.
In \cite{cavom} Cavalletti shows that $R_u$ restricted to $\T^b_u\times \T^b_u$ is an equivalence relation. 
Hence, from $R_u$ one obtains a partition of $\mathcal T^b_u$ into a disjoint family of equivalence classes $\{X_{\gamma}\}_{\gamma\in Q}$. Moreover, $\T^b_u$ is also $\sigma$-compact. 

Every $X_{\gamma}$ is isometric to some interval  $I_\gamma\subset\mathbb{R}$ via an isometry $\gamma:I_\gamma \rightarrow X_{\gamma}$. $\gamma:I_\gamma\rightarrow X$ extends to a geodesic that is arclength parametrized and that we also denote $\gamma$ defined on the closure $\overline{I}_{\gamma}$ of $I_{\gamma}$. We set $\overline{I}_{\gamma}=[a_\gamma,b_\gamma]$.

The set of equivalence classes $Q$ has a measurable structure such that $\mathfrak Q: \T^b_u\rightarrow Q$ is a measurable map. 
We set $\mathfrak q:= \mathfrak Q_{\#}\m|_{\mathcal T^b_u}$. 

Recall that a measurable section of the equivalence relation $R$ on $\T^b_u$ is a measurable map $s: \T^b_u\rightarrow \T^b_u$ such that $R_u(s(x))=R_u(x)$ and $(x,y)\in R_u$ implies $s(x)=s(y)$. In \cite[Proposition 5.2]{cavom} Cavalletti shows there exists a measurable section $s$ of $R$ on $\T^b_u$. Therefore, one can identify the measurable space $Q$ with $\{x\in \T^b_u: x=s(x)\}$ equipped with the induced measurable structure and we can see $\mathfrak q$ as a Borel measure on $X$. By inner regularity there exists a $\sigma$-compact set $Q'\subset X$ such that $\mathfrak q(Q\backslash Q')=0$ and in the following we will replace $Q$ with $Q'$ without further notice. {We parametrize $\gamma\in Q$ such that $\gamma(0)=s(x)$. In particular, $0\in (a_\gamma, b_\gamma)$. }

Now, we assume that $(X,d,\m)$ is an essentially non-branching $CD^*(K,N)$ space for $K\in \R$ and $N\geq 1$. 
The following lemma is Theorem 3.4 in \cite{cavmon}.
\begin{lemma}\label{somelemma}
Let $(X,d,\m)$ be an essentially non-branching $CD^*(K,N)$ space for $K\in \R$ and $N\in (1,\infty)$ with $\supp \m=X$ and $\m(X)<\infty$.
Then, for any $1$-Lipschitz function $u:X\rightarrow \R$, it holds $\m(\T_u\backslash \T^b_u)=0$.
\end{lemma}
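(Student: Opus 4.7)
The plan is to argue by contradiction, constructing an optimal dynamical plan between absolutely continuous probability measures whose branching set has positive mass, which would violate the essential non-branching assumption.

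First I would decompose $\mathcal T_u\setminus \mathcal T_u^b$ into its \emph{forward} and \emph{backward} branching parts. Set
\begin{align*}
A^+&:=\{x\in \mathcal T_u:\exists\, y,z\in\Gamma_u(x)\text{ with }(y,z)\notin R_u\},\\
A^-&:=\{x\in \mathcal T_u:\exists\, y,z\in\Gamma_u^{-1}(x)\text{ with }(y,z)\notin R_u\}.
\end{align*}
Both sets are analytic (hence $\m$-measurable) since $\Gamma_u$ and $R_u$ are closed, and $\mathcal T_u\setminus \mathcal T_u^b=A^+\cup A^-$. Since replacing $u$ by $-u$ swaps the two roles, it suffices to derive a contradiction from $\m(A^+)>0$.

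Next I would build a measurable branching structure on a large subset of $A^+$. Using the Kuratowski--Ryll-Nardzewski selection theorem together with the closedness of $\Gamma_u$, I would select, for each $x\in A^+$, a pair of distinct unit-speed geodesic rays $\eta^1_x,\eta^2_x$ starting at $x$, both lying in the monotonicity set $\Gamma_u$, and such that for every sufficiently small $r>0$ the points $\eta^1_x(r)$ and $\eta^2_x(r)$ fail to be $R_u$-related. By Lusin's theorem, after discarding a set of arbitrarily small measure, I may assume that the selection $x\mapsto(\eta^1_x,\eta^2_x)$ is continuous on a compact set $B\subset A^+$ of positive $\m$-measure, and that the distance $d(\eta^1_x(r),\eta^2_x(r))$ is bounded below by some $c r$ for $r\in (0,\delta]$, uniformly on $B$.

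Then I would construct the offending optimal plan. With $\mu_0:=\m(B)^{-1}\m|_B$, define
\[
\Pi:=\tfrac12(\mathrm{Id},\eta^1_{\cdot})_\#\mu_0+\tfrac12(\mathrm{Id},\eta^2_{\cdot})_\#\mu_0\in\mathcal P(\mathcal G^{[0,\delta]}(X)),
\]
with marginals $\mu_0$ and $\mu_\delta:=(e_\delta)_\#\Pi$. Rescaling time, one obtains an element of $\mathcal P(\mathcal G(X))$. The crucial point is that $\Pi$ is optimal: since the evaluation pairs all lie in $\Gamma_u$, and $\Gamma_u$ is $d$-cyclically monotone, the product measure $(e_0,e_\delta)_\#\Pi$ is supported in a $d$-cyclically monotone set, hence is an optimal coupling for $W_2$. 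To apply the essentially non-branching hypothesis we need $\mu_0,\mu_\delta\in\mathcal P^2(X,\m)$. Absolute continuity of $\mu_0$ is immediate; for $\mu_\delta$, this is the place where the $CD^*(K,N)$ assumption enters: the resulting volume/Brunn--Minkowski control on geodesic endpoints, combined with averaging $\delta$ over a small interval $[\delta_0/2,\delta_0]$ via Fubini (and passing to a suitable $\delta$), ensures $\mu_\delta\ll\m$ after a further restriction to a positive-measure subset of $B$.

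By construction, at every $x\in B$ the plan $\Pi$ contains two distinct geodesics issuing from $x$ which separate for all $r\in(0,\delta]$, so $\Pi$ is concentrated on a branching set of positive $\mu_0$-mass. This contradicts the essential non-branching of $(X,d,\m)$, and therefore $\m(A^+)=0$. The symmetric argument gives $\m(A^-)=0$, proving the lemma.

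I expect the main technical obstacle to be the measurable/continuous selection of the pair of branching rays together with the absolute continuity of $\mu_\delta$: one has to ensure simultaneously that the selection is Borel, that the branching is quantitative (separation at rate $\gtrsim r$) on a positive measure set, and that the $CD^*(K,N)$ volume control can be invoked to preclude concentration of $\mu_\delta$ on a $\m$-null set. Once these ingredients are in place, the optimality and the contradiction with essential non-branching are straightforward.
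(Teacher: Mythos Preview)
The paper does not prove this lemma at all: it simply quotes it as Theorem~3.4 from Cavalletti--Mondino \cite{cavmon}. So there is no ``paper's own proof'' to compare against; your proposal is an attempt to reconstruct the argument from the cited reference.

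Your outline follows the strategy actually used in \cite{cavmon} (and in the earlier Bianchini--Cavalletti work it builds on): split $\mathcal T_u\setminus\mathcal T_u^b$ into forward/backward branching, measurably select two $\Gamma_u$-rays emanating from each branching point, bundle them into a dynamical plan supported on the $d$-cyclically monotone set $\Gamma_u$ (hence optimal), and contradict essential non-branching. The one place where your sketch is genuinely incomplete is precisely the step you flag: the absolute continuity of $\mu_\delta$. The $CD^*(K,N)$ condition enters through the measure contraction property it implies, and the argument in \cite{cavmon} does not proceed via Brunn--Minkowski on endpoints plus an averaging trick; rather, one shows that translation along the transport rays maps $\m$-null sets to $\m$-null sets (using $MCP$), which gives $\mu_\delta\ll\m$ directly. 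Your averaging-in-$\delta$ idea is not obviously sufficient, since Fubini alone does not force $(e_\delta)_\#\Pi\ll\m$ for a positive-measure set of $\delta$'s without some a priori control on how mass spreads along rays---and that control is exactly the $MCP$ input. If you want a self-contained write-up, this is the step that needs to be made precise; otherwise, citing \cite{cavmon} as the paper does is appropriate.
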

For $\mathfrak q$-a.e. $\gamma\in Q$ it was proved in \cite{cavmil} (Theorem 7.10) that 
\begin{align*}
R_u(x)=\overline{X_\gamma}\supset X_\gamma \supset (R_u(x))^{\circ} \ \ \forall x\in \mathfrak Q^{-1}(\gamma).
\end{align*}
where $(R_u(x))^\circ$ denotes the relative interiour of the closed set $R_u(x)$. 
\begin{theorem}
Let $(X,d,\m)$ be a compact geodesic metric measure space with $\supp\m =X$ and $\m$ finite. Let $u:X\rightarrow \mathbb{R}$ be a $1$-Lipschitz function,  let $(X_{\gamma})_{\gamma\in Q}$ be the induced partition of $\mathcal T^b_u$ via $R_u$, and let $\mathfrak Q: \T^b_u\rightarrow Q$ be the induced quotient map as above.
Then, there exists a unique strongly consistent disintegration $\{\m_{\gamma}\}_{\gamma\in Q}$ of $\m|_{\T^b_u}$ w.r.t. $\mathfrak Q$. 
\end{theorem}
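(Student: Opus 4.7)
The plan is to deduce the claim from the classical measure-theoretic disintegration theorem for finite measures on standard Borel spaces; essentially all the structural work needed has already been assembled in the preceding subsection. Since $X$ is compact metric, it is Polish; the transport set $\T^b_u$ is a Borel (in fact $\sigma$-compact) subset and $\m|_{\T^b_u}$ is a finite Borel measure. The quotient $Q$, identified via Cavalletti's measurable section $s$ with the $\sigma$-compact set $\{x\in\T^b_u:s(x)=x\}$, inherits a standard Borel structure as a Borel subset of a Polish space; the map $\mathfrak Q$ is Borel measurable, and $\mathfrak q=\mathfrak Q_\#\m|_{\T^b_u}$ is a finite Borel measure on $Q$.

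The core step is then to invoke the abstract disintegration theorem (as formulated e.g.\ in Fremlin's Measure Theory, or in the Ambrosio--Gigli--Savar\'e monograph) applied to $\mathfrak Q:(\T^b_u,\m|_{\T^b_u})\to (Q,\mathfrak q)$. This produces a family of Borel probability measures $\{\m_\gamma\}_{\gamma\in Q}$ on $X$ for which $\gamma\mapsto \m_\gamma(B)$ is $\mathfrak q$-measurable for every Borel $B\subset X$ and
\[
\int_{\T^b_u} f\,d\m \;=\; \int_Q\!\!\int_X f\,d\m_\gamma\,d\mathfrak q(\gamma)
\]
for every bounded Borel $f:X\to\R$. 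Uniqueness $\mathfrak q$-almost everywhere is a direct consequence of the same theorem, since two disintegrations satisfying the above formula must induce the same conditional expectations on the countably generated $\sigma$-algebra of $Q$.

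Strong consistency, namely $\m_\gamma(\mathfrak Q^{-1}(\gamma))=1$ for $\mathfrak q$-a.e.\ $\gamma$, is the one statement that requires an additional verification. It holds provided the equivalence-class partition of $\T^b_u$ is generated, modulo $\mathfrak q$-null sets, by a countable family of Borel sets. I would obtain such a family by pulling back through $\mathfrak Q$ a countable basis for the standard Borel structure on $Q$; these pullbacks separate equivalence classes by construction, which is exactly the hypothesis needed to upgrade an ordinary consistent disintegration to a strongly consistent one in the standard Borel setting.

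The main obstacle is not conceptual but bookkeeping: one must be careful that the measurable section produced by Cavalletti does indeed make $(Q,\mathcal B(Q))$ standard Borel with countably generated $\sigma$-algebra, so that the abstract disintegration theorem applies with strong (and not merely ordinary) consistency, and so that uniqueness is genuine $\mathfrak q$-a.e.\ rather than only up to refinement. Once this is in place, the proof reduces to a black-box invocation of the abstract result with no further geometric input.
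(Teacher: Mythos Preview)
The paper does not supply its own proof of this theorem: it is stated in the preliminaries section as a known result from the Cavalletti--Mondino theory of $1D$ localisation (the surrounding text attributes the underlying measurable-section construction to \cite{cavom} and the disintegration framework to \cite{cavmon}). So there is no proof in the paper to compare against.

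Your sketch is correct and is precisely the standard argument used in those references: reduce to the abstract disintegration theorem on standard Borel spaces, using that $\T^b_u$ is Borel in a Polish space, that $Q$ inherits a standard Borel structure via the measurable section $s$, and that countable generation of the quotient $\sigma$-algebra upgrades consistency to strong consistency. One small point of care: the paper only asserts that $Q$ contains a $\sigma$-compact subset $Q'$ of full $\mathfrak q$-measure, not that $Q$ itself is $\sigma$-compact; this is enough for your argument, but you should phrase the standard-Borel step accordingly (work on $Q'$ and note that the disintegration is only determined $\mathfrak q$-a.e.\ anyway). With that caveat, your bookkeeping concern is the right one and is exactly what is handled in \cite{cavom}.
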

Define the ray map 
\begin{align*}
g:  \mathcal V\subset Q\times \R\rightarrow X\ \mbox{ via }\
\mbox{graph}(g)=\{ (\gamma, t,x) \in Q\times \R\times X:  \gamma(t)=x\}
\end{align*}
By definition $\mathcal V= g^{-1}(\T^b_u)$. The map $g$ is Borel measurable, $g(\gamma,\cdot)=\gamma: (a_\gamma, b_\gamma)\rightarrow X$ is a geodesic, $g:\mathcal V\rightarrow  \T^b_u$ is bijective and its inverse is given by $g^{-1}(x)=( \mathfrak Q(x), \pm d(x,\mathfrak Q(x)))$.
\begin{theorem}\label{th:1dlocalisation}
Let $(X,d,\m)$ be an essentially non-branching $CD^*(K,N)$  space with $\supp\m=X$, $\m(X)<\infty$, $K\in \R$ and $N\in (1,\infty)$.

Then, for any $1$-Lipschitz function $u:X\rightarrow \R$ there exists a disintegration $\{\m_{\gamma}\}_{\gamma\in Q}$ of $\m$ that is strongly consistent with $R^b_u$. 

Moreover, for $\mathfrak q$-a.e. $\gamma\in Q$, $\m_{\gamma}$ is a Radon measure with $\m_{\gamma}=h_{\alpha}\mathcal{H}^1|_{X_{\alpha}}$ and $(X_{\gamma}, d_{X_\gamma}, \m_{\gamma})$ verifies the condition $CD(K,N)$.

More precisely, for $\mathfrak q$-a.e. $\gamma\in Q$ it holds that
\begin{align}\label{kuconcave}
h_{\gamma}(c_t)^{\frac{1}{N-1}}\geq \sigma_{K/N-1}^{(1-t)}(|\dot c|)h_{\gamma}(\gamma_0)^{\frac{1}{N-1}}+\sigma_{K/N-1}^{(t)}(|\dot c|)h_{\gamma}(\gamma_1)^{\frac{1}{N-1}}
\end{align}
for every geodesic $c:[0,1]\rightarrow (a_\gamma,b_\gamma)$.
\end{theorem}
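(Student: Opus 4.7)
The plan is to build on the disintegration theorem stated just above, which already supplies the family $\{\m_\gamma\}_{\gamma\in Q}$ as a strongly consistent disintegration of $\m|_{\T^b_u}$. By Lemma~\ref{somelemma}, $\T_u\setminus\T^b_u$ is $\m$-null, so the remaining work is (i) to show that each $\m_\gamma$ is Radon of the form $h_\gamma\,\mathcal H^1|_{X_\gamma}$, and (ii) to derive the concavity inequality~\eqref{kuconcave} for $h_\gamma$. Following the scheme of Cavalletti--Mondino, the idea is to lift suitable optimal transports supported along a single ray to genuine $L^2$-transports on $(X,d,\m)$, apply the ambient $CD^*(K,N)$ condition, and then project the resulting entropy inequality back down to the ray.

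The heart of the proof is this ``lifting and averaging'' step. Given compact slices $A_0,A_1\subset X_\gamma$ along a single ray, I would enlarge them to Borel ``slabs'' $\tilde A_0,\tilde A_1\subset\T^b_u$ made of analogous slices along nearby rays -- using the ray map $g$ and the measurable section $s$ to make this choice Borel -- and normalize $\m|_{\tilde A_i}$ to produce probabilities $\mu_0,\mu_1\in\mathcal P_b^2(X,\m)$. Applying $CD^*(K,N)$ on $(X,d,\m)$ to this pair yields an entropy inequality of the form~\eqref{ineq:cd} with $\sigma$-coefficients. Because $\Gamma_u$ is $d$-cyclically monotone and the ambient space is essentially non-branching, the optimal plan between $\mu_0$ and $\mu_1$ is unique and concentrated on pairs lying in the same equivalence class $X_\gamma$, and its $W_2$-geodesic stays in the corresponding slab. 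Disintegrating the entropy bound against $\mathfrak q$ and letting the transverse width of the slabs tend to zero then yields, for $\mathfrak q$-a.e.~$\gamma$, a one-dimensional $CD^*(K,N)$-inequality between the conditional measures along $\gamma$.

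A standard one-dimensional characterisation closes the argument: a Radon measure $h\,\mathcal H^1|_I$ on an interval $I\subset\R$ satisfies $CD^*(K,N)$ if and only if $h^{1/(N-1)}$ is $\sigma_{K,N-1}$-concave, which is precisely~\eqref{kuconcave}. This same concavity rules out singular parts and infinities on bounded subintervals, so Radon-ness and the absolute continuity $\m_\gamma = h_\gamma\,\mathcal H^1|_{X_\gamma}$ come out as by-products rather than needing a separate argument.

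I expect the main difficulty to lie in the fiberwise reduction described above. One has to verify that saturated slabs can be chosen in a Borel-measurable fashion (using the measurable section $s$ on $\T^b_u$); that the lifted optimal plan truly decomposes ray-by-ray -- here essential non-branching is indispensable, since a priori a cheaper transport could cut across rays; and that interchanging the entropy inequality with the disintegration and the vanishing-width limit is legitimate. The necessary measurable foundations are supplied by Cavalletti's work on $\T^b_u$ recalled in the preceding subsection, and the non-branching reduction is the same kind of argument as in the original $L^2$-localisation theorem of \cite{cavmon}.
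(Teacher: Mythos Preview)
The paper does not give its own proof of this theorem. Theorem~\ref{th:1dlocalisation} is stated in the preliminaries (Subsection~2.5) as a known result, quoted from Cavalletti--Mondino \cite{cavmon} (together with the refinements in \cite{cavmil}); the paper immediately follows the statement with a remark and moves on, using the result as a black box in later sections. So there is nothing to compare against here.

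Your sketch is broadly in the spirit of the argument in \cite{cavmon}: one does work ray-by-ray, using the $d$-cyclical monotonicity of $\Gamma_u$ and essential non-branching to force optimal plans between saturated sets to stay within single transport rays, and then passes the ambient curvature inequality down to each ray. Two caveats if you intend this as more than a pointer to the literature. First, in the actual proof the absolute continuity $\m_\gamma\ll\mathcal H^1|_{X_\gamma}$ is established \emph{before} the concavity inequality, via an MCP-type contraction estimate along rays; it is not simply read off from \eqref{kuconcave} a posteriori as you suggest. Second, your ``vanishing-width limit'' is exactly where the technical work lies: making the slab construction Borel, ensuring the optimal plan disintegrates fiberwise, and passing to the limit all require substantial care (Sections~3--4 of \cite{cavmon}). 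As written, your outline correctly identifies the architecture but would not stand as a self-contained proof; citing \cite{cavmon}, as the paper does, is the appropriate move.
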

\begin{remark} The property
\eqref{kuconcave} yields that $h_{\gamma}$ is locally Lipschitz continuous on $(a_\gamma,b_\gamma)$ \cite[Section 4]{cavmon}, and that $h_{\gamma}:\R \rightarrow (0,\infty)$ satifies
\begin{align*}
\frac{d^2}{dr^2}h_{\gamma}^{\frac{1}{N-1}}+ \frac{K}{N-1}h_{\gamma}^{\frac{1}{N-1}}\leq 0 \mbox{ on $(a_\gamma, b_\gamma)$} \mbox{ in distributional sense.}
\end{align*}
\end{remark}
\subsection{Characterization of curvature bounds via $1D$ localisation}
\begin{definition}
Let $(X,d_X,\m_X)$ be an essentially non-braching metric measure space with $\m(X)=1$, let $K\in \R$ and $N\geq 1$, and let $u:X\rightarrow \R$ be a $1$-Lipschitz function. We say that $(X,d_X,\m_X)$ satisfies the condition $CD_u^1(K,N)$ if there exist subsets $X_\gamma\subset X$, $\gamma\in Q$, such that
\begin{itemize}
\item[(i)] There exists a disintegration of $\m_{\mathcal T_u}$ on $(X_\gamma)_{\gamma\in Q}$: 
\begin{align*}
\m|_{\mathcal T_u}=\int_{X_\gamma} \m_\gamma d\mathfrak q(\gamma)\ \mbox{ with }\m_\gamma(X_\gamma)=1\ \mbox{for $\mathfrak q$-a.e. }\gamma\in Q.
\end{align*}
\item[(ii)] For $\mathfrak q$-a.e. $\gamma\in Q$ the set $X_\gamma$ is the image $\mbox{Im}(\gamma)$ of a geodesic $\gamma:I_\gamma\rightarrow X$ for an interval $I_\gamma\subset \R$.
\item[(iii)] The metric measure space $(X_\gamma, d_{X_\gamma},\m_\gamma)$ satisfies the condition $CD(K,N)$.
\end{itemize}

The metric measure space $(X,d_X,\m_X)$ satisfies the condition $CD^1_{Lip}(K,N)$ if it satisfies the condition $CD^1_u(K,N)$ for any $1$-Lipschitz function $u:X\rightarrow \R$.
\end{definition}
\begin{remark}
From the previous subsection it is immediatly clear that the condition $CD(K,N)$ implies the condition $CD^1_{Lip}(K,N)$. 
\end{remark}
{
The following theorem will play an important role in the proof of our gluing result.}
\begin{theorem}[Cavalletti-Milman]\label{thm:cavmil}
If an essentially non-branching metric measure space $(X,d_X,\m_X)$ satisfies the condition $CD_{Lip}^1(K,N)$ for $K\in \R$ and $N\in [1,\infty)$ then it satisfies the condition $CD^*(K,N)$.

If $\m$ is a finite measure it even satisfies $CD(K,N)$.
\end{theorem}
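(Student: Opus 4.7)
The strategy is to use the $1D$ localisation provided by $CD^1_{\mathrm{Lip}}(K,N)$ to reduce the verification of the multi-dimensional entropy inequality to $1D$ CD inequalities along transport rays. After standard approximation arguments, it suffices to verify the $CD^*(K,N)$ inequality for marginals $\mu_0=\rho_0\m$, $\mu_1=\rho_1\m$ with bounded densities and bounded support.

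First, I would associate to the pair $(\mu_0,\mu_1)$ a suitable $1$-Lipschitz ``guiding function'' $u:X\to\R$. The natural choice is a Kantorovich potential for the $L^1$/Monge problem between the two marginals (after a mass-balancing normalisation $\int(\rho_0-\rho_1)\,d\m = 0$, applied to $\rho_0-\rho_1$). By construction, the associated transport set $\T_u$ will contain $\supp\mu_0\cup\supp\mu_1$, and the relation $R_u$ is compatible with the $L^1$-transport in the sense that the mass can be pushed from $\mu_0$ to $\mu_1$ along rays of $u$. Applying $CD^1_u(K,N)$ then produces a disintegration $\m|_{\T_u}=\int \m_\gamma\,d\mathfrak{q}(\gamma)$ with each $(X_\gamma,d_{X_\gamma},\m_\gamma)$ satisfying $CD(K,N)$, and simultaneously induces disintegrations $\mu_i=\int \mu_i^\gamma\,d\mathfrak{q}(\gamma)$ of the marginals themselves, where $\mu_i^\gamma$ is supported on $X_\gamma$.

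On each needle $X_\gamma$, since $(X_\gamma,d_{X_\gamma},\m_\gamma)$ is a genuine $1D$ $CD(K,N)$ space, I can run the standard Wasserstein geodesic between $\mu_0^\gamma$ and $\mu_1^\gamma$ and obtain the pointwise entropy inequality
\begin{equation*}
S_N(\mu_t^\gamma|\m_\gamma)\le -\int\bigl[\sigma_{K,N}^{(1-t)}(\theta)\rho_0(x)^{-1/N}+\sigma_{K,N}^{(t)}(\theta)\rho_1(y)^{-1/N}\bigr]\,d\pi^\gamma(x,y),
\end{equation*}
where $\pi^\gamma$ is the (essentially unique) optimal coupling on $X_\gamma$. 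Glueing these together via $\pi:=\int \pi^\gamma\,d\mathfrak{q}(\gamma)$ and $\mu_t:=\int \mu_t^\gamma\,d\mathfrak{q}(\gamma)$, and using the disintegration formula to express $S_N(\mu_t|\m)$ in terms of the needle entropies, yields the global $CD^*(K,N)$ bound for this particular coupling and Wasserstein geodesic. The upgrade from $CD^*(K,N)$ to the full $CD(K,N)$ when $\m(X)<\infty$ follows from the equivalence result under essential non-branching already invoked earlier in the paper.

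The main obstacle is the compatibility step: one must guarantee that the needle decomposition produced by $u$ is genuinely aligned with some optimal transport between $\mu_0$ and $\mu_1$, i.e.\ that $\pi^\gamma$ defined above really is an optimal coupling and that the glued measure $\mu_t$ is a $W_2$-geodesic in the ambient space. This is where the essential non-branching hypothesis, together with the careful choice of $u$ (as a Kantorovich potential adapted to the transport direction rather than an arbitrary $1$-Lipschitz function), becomes indispensable: essential non-branching forces uniqueness of the dynamical coupling on each ray, and the Kantorovich structure of $u$ ensures that concatenating $1D$-optimal plans yields a globally $d^2$-cyclically monotone plan. Handling this alignment, and in particular verifying that $W_2^2(\mu_0,\mu_1)=\int W_2^2(\mu_0^\gamma,\mu_1^\gamma)\,d\mathfrak{q}(\gamma)$, is the technical heart of the argument.
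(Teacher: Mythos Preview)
The paper does not prove this theorem; it is quoted from Cavalletti--Milman \cite{cavmil} and used as a black box, so there is no ``paper's own proof'' to compare against. Nevertheless, your sketch has a genuine gap in the step you yourself flag as the main obstacle.

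The issue is the choice of $u$. An $L^1$-Kantorovich potential for $\rho_0-\rho_1$ produces transport rays along which the $L^1$ (Monge) transport moves mass, but there is no reason the $W_2$-optimal transport between $\mu_0$ and $\mu_1$ should travel along those same rays. Your claim that ``the Kantorovich structure of $u$ ensures that concatenating $1D$-optimal plans yields a globally $d^2$-cyclically monotone plan'' is not justified: $d$-cyclical monotonicity (which is what the $L^1$ potential gives) does not imply $d^2$-cyclical monotonicity of the glued plan, and in general the two transports differ. Consequently neither the equality $W_2^2(\mu_0,\mu_1)=\int W_2^2(\mu_0^\gamma,\mu_1^\gamma)\,d\mathfrak q(\gamma)$ nor the fact that $t\mapsto\mu_t$ is a $W_2$-geodesic follows from your setup.

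What Cavalletti--Milman actually do is different and considerably more involved: they first derive the measure contraction property from $CD^1$, use it to obtain regularity and uniqueness of $L^2$-optimal dynamical plans, and then take for $u$ not an $L^1$ potential but (locally) the signed distance from a level set of the $L^2$-Kantorovich potential $\varphi$. This is the $1$-Lipschitz function whose transport rays are, by construction, segments of the actual $W_2$-transport geodesics, and it is this choice that makes the needle entropies reassemble into the global $CD$ inequality. Without this alignment device, or an equivalent one, the argument does not close.
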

{
\begin{remark}
Taking into account a disintegration result for $\sigma$-finite measures in \cite[Section 3.1]{cav-mon-lapl-18} it { should be} possible to prove the previous theorem also for $\sigma$-finite measures. Consequently in our main theorem we {would then be able}  to replace the condition $CD^*$ with the condition $CD$ in general.
\end{remark}}

\section{Applying $1D$ localisation}

\subsection{First application}\label{sec:first application}
Let $X_0$ and $X_1$ be $n$-dimensional Alexandrov spaces, let $\mathcal I:\partial X_0\rightarrow \partial X_1$ be an isometry as in Theorem \ref{th:petruningluing}. Let $Z$ be the glued space, set $\partial X_0 =\partial X_1=:Y$ and recall that $(Z,d_Z,\mathcal H^n)$ satisfies $CD(k(n-1),n)$ for some $k\in \R$. We consider continuous functions $\Phi_0$ and $\Phi_1$ on $X_0$ and $X_1$ respectively such that $\Phi_0|_{\partial X_0}=\Phi_1|_{\partial X_1}$, 
and we define $\Phi_Z: X_0\cup_{\mathcal I} X_1\rightarrow \R$ by 
\begin{align*}
\Phi_Z(x)=\begin{cases} \Phi_0(x)\ & \ \mbox{if } x\in X_0, \\
\Phi_1(x)\ & \ \mbox{otherwise}.
\end{cases}
\end{align*}
{

\begin{lemma}\label{lemma} Let $X$ be an $n$-dimensional Alexandrov space with $Y=\partial X\neq \emptyset$.
Let $\Phi:X\rightarrow \R$, $i=0,1$ be {semi-concave}. 
Then $\Phi|_{Y}:Y\rightarrow \R$ is differentiable $\mathcal H^{n-1}$-a.e. {meaning that $Y$ contains a subset $A$ such that $\mathcal H^{n-1}(Y\backslash A)=0$ and for every $a\in A$ it holds that $T_aY\cong \R^{n-1}$ and $d\Phi\co T_aY\to \R$ is linear.}
\end{lemma}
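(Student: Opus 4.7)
My plan is to combine a regularity-of-the-boundary statement with the invariance of semi-concavity under restriction to extremal subsets, and then apply standard a.e.\ differentiability for semi-concave functions. First, I would show that for $\mathcal H^{n-1}$-a.e.\ $a \in Y$, the space of directions $\Sigma_a X$ is the half-sphere $\mathbb{S}^{n-1}_+$, so that $T_a X \cong \R^n_+$ and $T_a Y \cong \R^{n-1}$. This uses the Burago--Gromov--Perelman stratification of Alexandrov spaces with boundary: the singular boundary stratum $\{a \in Y : \Sigma_a X \ncong \mathbb{S}^{n-1}_+\}$ is a countable union of sets of topological dimension at most $n-2$, and hence has zero $(n-1)$-dimensional Hausdorff measure in $Y$. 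This settles the tangent-cone half of the claim.

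Next, I would use that the boundary $Y \subset X$ is an extremal subset, and invoke Petrunin's theorem (from his theory of semi-concave functions in Alexandrov geometry) that the restriction of a semi-concave function on $X$ to an extremal subset, endowed with its induced intrinsic length metric, remains semi-concave. Applied to $\Phi$ and $Y$, this gives that $\Phi|_Y \colon (Y, d_Y) \to \R$ is semi-concave.

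Finally, semi-concavity of $\Phi|_Y$ together with $T_a Y \cong \R^{n-1}$ at generic $a$ yields, via Alexandrov's classical theorem on a.e.\ twice differentiability of concave functions (applied in a bi-Lipschitz or Perelman DC chart of $Y$ near $a$), that the intrinsic differential $d(\Phi|_Y)_a$ is linear on $T_a Y$ for $\mathcal H^{n-1}$-a.e.\ $a$. The intrinsic and extrinsic differentials agree at such points because a $Y$-geodesic and an $X$-geodesic issuing from $a$ with the same initial direction $v \in T_a Y$ have the same $1$-jet at $a$; hence $d\Phi_a|_{T_a Y}$ is linear, as claimed. The main obstacle is the middle step: $Y$-geodesics are typically not $X$-geodesics, so semi-concavity does not automatically transfer to the restriction, and one must control the discrepancy via the first variation formula and the extremality of $Y$, which is the technical heart of the argument.
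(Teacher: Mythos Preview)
Your approach is genuinely different from the paper's, and the overall strategy is reasonable, but step 3 hides a real difficulty that you do not address.

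The paper never invokes Petrunin's restriction-to-extremal-subsets theorem or intrinsic semi-concavity of $\Phi|_Y$. It only uses that $\Phi|_Y$ is Lipschitz. Near boundary-regular points the paper covers $Y$ by $(1+\epsilon)$-bi-Lipschitz strainer charts $F^\epsilon_i:U_i\to V_i\subset\R^{n-1}$, applies the classical Rademacher theorem to the Lipschitz function $\Phi\circ (F^\epsilon_i)^{-1}$ on $V_i$, and then uses the chain rule to write $d\Phi|_p=A^\epsilon\circ B^\epsilon$ with $A^\epsilon$ linear and $B^\epsilon$ a $1$-homogeneous map $\epsilon$-close to an isometry. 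Intersecting over $\epsilon=1/n$ and passing to a limit, $d\Phi|_p$ is the composition of a linear map and an isometry, hence linear. This ``$\epsilon$-close-to-isometry'' trick is the technical heart of the paper's argument.

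Your step 3 proposes instead to apply Alexandrov's a.e.\ differentiability theorem for semi-concave functions in a ``bi-Lipschitz or Perelman DC chart of $Y$''. Here is the gap: a bi-Lipschitz chart does not preserve semi-concavity, so in such a chart you only retain Lipschitz regularity and are back to Rademacher, after which you still owe the passage from chart-differentiability to linearity of $d\Phi$ on $T_aY$---precisely the paper's $\epsilon$-trick. If instead you want genuine DC charts on $Y$ so that $\Phi|_Y$ becomes DC in coordinates, you need Perelman's DC structure, which is established for Alexandrov spaces; but it is a well-known open problem whether the boundary of an Alexandrov space is itself Alexandrov, so DC charts on $Y$ are not available for free. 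One can try to build them by restricting strainer coordinates from $X$ to $Y$ and using your step 2 again, but verifying that $\Phi|_Y$ is DC in those coordinates is not a formal consequence and requires essentially the same analysis the paper carries out.

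Step 4 (identifying intrinsic and extrinsic differentials) is also nontrivial, though it can be justified at boundary-regular points using that $T_aY$ sits isometrically in $T_aX$ and that $Y$-geodesics and $X$-geodesics with the same initial direction are $o(t)$-close. The paper sidesteps this entirely by working with the extrinsic differential throughout.

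In short: your route is conceptually appealing and step 2 is a legitimate (nontrivial) input, but step 3 as written either presupposes the open boundary conjecture or collapses back to the paper's argument. The paper's proof is more elementary in that it uses only Lipschitz regularity of $\Phi|_Y$ and an explicit limiting argument over strainer charts.
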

\begin{proof}
%

{

One says a point $p\in Y=\partial X $ is boundary regular if $T_pY=\R^{n-1}$.
Since $Y$ is the boundary of  an $n$-dimensional Alexandrov space $X$, it is $\mathcal H^{n-1}$-rectifiable. Even stronger, 
the set of boundary regular points $\mathcal R(Y)$ in $Y$ has full $\mathcal H_Y^{n-1}$-measure and for any $\epsilon>0$ one can cover  $\mathcal R(Y)$ 
by  $(1+\epsilon)$-biLipschitz coordinate maps $F^{\epsilon}_i:U_i\to V_i\subset \R^{n-1}$ where each $V_i$ is open. The maps $F^{\epsilon}_i$ are given by standard strainer coordinates centered at boundary regular  points.
By the metric version of Rademacher's theorem due to Cheeger \cite{cheegerlipschitz} it follows that $\Phi|_Y$ is differentiable $\mathcal H^{n-1}$-a.e.\ .

Let us give another, self-contained argument that does not rely on Cheeger's theorem.

Without loss of generality we can assume that $\Phi$ is $L$-Lipscjhitz for some finite $L>0$.
For $0<\epsilon<1$ we consider the maps $F^\epsilon_i$.
Let us drop the superscript $\epsilon$ for a moment.  In particular, each coordinate component $F_i^j$, $j=1,\dots, n-1$, of $F_i$ is a semiconcave function on $U_i$ and admits a differential $dF^j_i|_p$ in the sense of Alexandrov spaces at every point $p\in U_i$. 
Since $F_i$ and $F_i^{-1}$ are $(1+\epsilon)$-biLipschitz, one has that $(1+\epsilon)|v|\leq |dF^j_i|_p(v)|\leq (1+\epsilon)|v|$ for every $p\in U_i$ and every $v\in T_pX$. 

{
 The function $\Phi\circ F_i^{-1}: V_i\rightarrow \R$ is $2L$-Lipschitz and therefore differentiable $\mathcal L^{n-1}$-a.e. by the standard Rademacher theorem.
So we can choose a set of full measure $W_i^{\epsilon}=W_i$ in $U_i$ such that $\forall p\in F_i(W_i)\subset V_i$ the point $p\in W_i$ is regular and the function $\Phi\circ F_i^{-1}$ is differentiable at $F_i(p)$.

The chain rule for Alexandrov space differentials yields
\begin{align}\label{someid}
d\Phi|_p = d (\Phi \circ F_i^{-1})|_{F_i(p)}\circ DF_i|_p \ \ \forall p\in W_i
\end{align}
where $DF_i|_p= (dF_i^1|_p, \dots, dF_i^{n-1}|_p)$. }

We obtain by \eqref{someid} that  for all  $p\in W_i$ and for any $\epsilon>0$ the Alexandrov differential $d\Phi|_p: \R^{n-1}\mapsto \R$ of $\Phi$ at $p$ is the composition of a $2L$-Lipschitz  linear map $A^{\epsilon}=d (\Phi \circ F_i^{-1})|_{F_i(p)}:\R^{n-1}\mapsto \R$ and  1-homogeneous map $B^{\epsilon}=DF_i|_p:\R^{n-1}\mapsto \R^{n-1}$ that is $\epsilon$-close to an isometry on a the unit ball around the origin (we have identified $T_pY$ with $\R^{n-1}$ in the above).

Let us consider $\epsilon_n=\frac{1}{n}$ and let $p\in \bigcap_{n\in \N} \bigcup W_i^{\frac{1}{n}}$.  After eventually choosing a subsequence $A^{\frac{1}{n}} \rightarrow A$ for a linear map $A$ and $B^{\frac{1}{n}}\rightarrow B$ for an isometry $B$. Hence $d\Phi_p=A\circ B$ is linear. Since $\bigcap_{n\in \N} \bigcup W_i^{\frac{1}{n}}$ has full $\mathcal H^{n-1}$-measure this yields the claim.}

\end{proof}

\begin{proposition}\label{prop}

Let $Z$ be the glued space, let $Y=\partial X_i\subset Z$. Let $N\subset Y$ such that $\mathcal{H}^{n-1}(N)=0$. Let $(X_\gamma)_{\gamma\in Q}$ be the $1D$ localisation of $m=\mathcal H^n_Z$  w.r.t. the $1$-Lipschitz function $u=d(x_1,\cdot)$ for $x_1\in B_\eta(x_1)\subset X_1$ and $\eta>0$. Let $(Q,\mathfrak q)$ and $\mathfrak Q: \T^b_u \rightarrow Q$ be the corresponding quotient space and the quotient map.
Then
\begin{align*}
\mathcal H^n(X_0\cap \mathfrak Q^{-1}(\{\gamma\in Q: \exists t\in [0,L(\gamma)) \mbox{ s.t. }\gamma(t)\in N\}))=0.
\end{align*}
\end{proposition}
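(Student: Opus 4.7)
The plan is to show that, up to an $\m$-null set, the set $E:=X_0\cap \mathfrak Q^{-1}(A)$ (with $A=\{\gamma\in Q:\exists\, t\in[0,L(\gamma)),\ \gamma(t)\in N\}$) is contained in a ``conical tube'' over $N$ whose $\mathcal H^n$-measure is bounded by $\mathcal H^{n-1}(N)=0$ via Lipschitz estimates for the radial geometry of the Alexandrov space $Z$. The argument uses the ambient Alexandrov structure of $Z$ coming from Petrunin's gluing theorem, and not in any essential way the full $1D$ localisation.

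By Theorems \ref{th:petruningluing} and \ref{th:petrunincd} the glued space $(Z,d_Z,\m)$ is an Alexandrov space with some lower curvature bound $k\in\R$, satisfying $CD(k(n-1),n)$. Since $x_1\in X_1\setminus Y$, set $\rho_0:=d_Z(x_1,Y)>0$ and fix $\rho\in(0,\rho_0)$ so that the sphere $S_\rho:=\{d_Z(x_1,\cdot)=\rho\}$ lies inside $X_1\setminus Y$. The transport rays of $u=d_Z(x_1,\cdot)$ are exactly the unit-speed geodesics emanating from $x_1$; by Lemma \ref{somelemma} the $\m$-null set $\T_u\setminus\T^b_u$ may be discarded, and on $\T^b_u$ each ray hits $S_\rho$ in a unique point. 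This gives a Borel map $\Phi\co Q\to S_\rho$, $\Phi(\gamma)=\gamma\cap S_\rho$, together with a single-valued radial projection $\pi_\rho\co \T^b_u\setminus B_\rho(x_1)\to S_\rho$ satisfying $\pi_\rho=\Phi\circ\mathfrak Q$.

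The key input consists of two Lipschitz estimates, valid on each bounded annulus $A_R:=\{\rho\le d_Z(x_1,\cdot)\le R\}$ with constants depending on $k,\rho,R,n$: (L1) the radial projection $\pi_\rho$ restricted to $A_R\cap \T^b_u$ is Lipschitz; (L2) the radial extension $\Psi\co S_\rho\times[\rho,R]\to Z$, $\Psi(z,t):=\gamma_z(t)$, where $\gamma_z$ is the arc-length geodesic from $x_1$ through $z$, is Lipschitz. Both are standard consequences of Toponogov's hinge comparison in $Z$; equivalently, $\pi_\rho$ is (a reparameterisation of) the gradient flow of the semi-concave function $-d_Z(x_1,\cdot)$, whose Lipschitz bounds on $A_R$ are a well-known result in the Alexandrov theory of gradient flows of semi-concave functions. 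Applied to $N\subset Y\subset A_R$ (for $R$ large), (L1) gives $\mathcal H^{n-1}(\pi_\rho(N))=0$.

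For any $p\in E$, the transport ray through $p$ passes through some $y\in N$, and since $p$ and $y$ lie on the same ray from $x_1$ we have $\pi_\rho(p)=\pi_\rho(y)\in\pi_\rho(N)$. Hence, up to an $\m$-null set, $E\cap A_R\subset\pi_\rho^{-1}(\pi_\rho(N))\cap A_R\subset\Psi(\pi_\rho(N)\times[\rho,R])$, and by (L2),
$$\mathcal H^n\bigl(\Psi(\pi_\rho(N)\times[\rho,R])\bigr)\le C(k,\rho,R,n)\,\mathcal H^{n-1}(\pi_\rho(N))\,(R-\rho)=0.$$
Exhausting $E$ by the $A_R$ as $R\uparrow\infty$ via $\sigma$-compactness of $\T^b_u$ gives $\mathcal H^n(E)=0$. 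The main obstacle is the careful verification of (L1) and (L2) uniformly on bounded pieces, with constants blowing up as $\rho\to 0$ or $R\to\infty$; conceptually, however, the argument is purely radial and entirely avoids the open question, flagged in the introduction, of controlling the number of times a transport ray crosses the gluing interface $Y$.
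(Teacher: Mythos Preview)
Your argument has a genuine gap at (L1). The inward radial projection $\pi_\rho$ is \emph{not} Lipschitz on $A_R\cap\T^b_u$, and the justification you give is incorrect: it is $d_Z(x_1,\cdot)$ that is semi-concave in an Alexandrov space of curvature bounded below, not $-d_Z(x_1,\cdot)$. The latter has upward kinks along the cut locus of $x_1$, so there is no Lipschitz gradient flow toward $x_1$. Concretely, two points lying close together but on opposite sides of the cut locus can sit on transport rays whose intersections with $S_\rho$ are far apart; on the flat cylinder $S^1\times\R$ with $x_1=(0,0)$ the points $(\pi-\epsilon,0)$ and $(\pi+\epsilon,0)$ are at distance $2\epsilon$ yet $\pi_\rho$ sends them to $(\rho,0)$ and $(-\rho,0)$, at distance $2\rho$. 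Toponogov comparison goes the \emph{other} way: monotonicity of comparison angles at $x_1$ shows that the \emph{outward} map $\Psi$ is Lipschitz (so your (L2) is fine), but it only yields a \emph{lower} bound for $d(\pi_\rho(p_1),\pi_\rho(p_2))$ in terms of $d(p_1,p_2)$, never an upper bound. Without (L1) you cannot conclude $\mathcal H^{n-1}(\pi_\rho(N))=0$, and there is no evident repair: the cut locus of $x_1$ may meet $Y$ in a set of positive $\mathcal H^{n-1}$-measure, and even away from it the local Lipschitz constants of $\pi_\rho$ blow up.

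The paper's proof avoids this obstruction entirely by working needle by needle rather than through a global Lipschitz radial map. It covers $N$ by balls $B_{r_i}(x_i)$ with $\sum_i r_i^{n-1}\le\epsilon$, and for each needle $\gamma$ meeting $B_{r_i}(x_i)$ uses the one-dimensional Brunn--Minkowski inequality (from the $CD(k(n-1),n)$ structure on the needle) to dominate $\m_\gamma(\gamma^{-1}(X_0))$ by a constant times $r_i^{-1}\m_\gamma(B_{2r_i}(x_i))$. Integrating over such needles and invoking the volume bound $\mathcal H^n(B_{2r_i}(x_i))\le C r_i^n$ gives $\mathcal H^n(X_0\cap\mathfrak Q^{-1}(Q_i))\le C' r_i^{n-1}$; summing over $i$ yields the claim. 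In short, the $CD$ inequality along each needle substitutes for the global Lipschitz control your approach requires but which is unavailable.
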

\begin{proof}
The property that $\mathcal H^{n-1}(N)=0$ is equivalent to the following statement. For any $\epsilon>0$ and any $\delta\in (0,\eta/4)$ there exist $(r_i)_{i\in \N}$ with $r_i\in (0,\delta)$ and $x_i\in Z$, $i\in \N$, such that
\begin{align}\label{nullset_estimate}
N\subset \bigcup_{i\in\N} B_{r_i}(x_i)\ \mbox{ and }\ 
\sum_{i\in \N}(r_i)^{n-1}\leq \epsilon.
\end{align}
We set $Q_i'=\{\gamma\in \mathcal G(X): \exists t\in [0,1] \mbox{ s.t. }\gamma(t)\in B_r(x_i)\}$ and $Q_i=Q_i'\backslash \bigcup_{j=1}^{i-1} Q_j$. W.l.o.g. we assume that $\mathfrak q(Q_i)>0$ for any $i\in \N$.
We will prove that 
\begin{align*}
\mathcal H^n\left(X_0\cap \bigcup_{i\in \N}\mathfrak Q^{-1}(Q_i)\right)\leq C \epsilon
\end{align*}
for some constant $C=C(k,n)$. This implies the claim of the proposition.
\smallskip

Let us fix $i\in \N$. 
There exists $Q^\dagger_i$ with $\mathfrak q(Q_i)=\mathfrak q(Q^\dagger_i)$ such that $\m_\gamma$ admits a density $h_\gamma$ w.r.t. $\mathcal H^1$ and $(X_\gamma, \m_\gamma)$ is $CD(k(n-1),n)$ for all $\gamma\in Q^\dagger_i$. In particular, if $J\subset I_\gamma$ and $J_t= t b_\gamma + (1-t) J$, then the Brunn-Minkowski inequality implies
\begin{align*}
\m_\gamma(J_t)\geq {C(k,n)}{t^n} \m_\gamma(J).
\end{align*}
We pick $J=\gamma^{-1}(X_0)$. 
Let $D=\diam Z$ and choose $s\in \N$ such that $\frac{D}{s-1}\leq r_i\leq \frac{D}{s}$. We decompose $J$ into intervals $(J^{l,s})_{l=1,\dots,s}$ such that $|J^{l,s}|\leq \frac{D}{s}$. Moreover, there exists $t_{l}\in (0,1)$ such that $J^{l,s}_{t_l}\subset \gamma^{-1}(B_{2r_i}(x_i))$. 
Since $r_i\leq \delta < \eta/4$ and since $B_\eta(x_1)\subset X_1$, we have $B_{\eta/2}(x_1)\cap B_{2r_i}(x_i)=\emptyset$. Therefore $t_l\geq \frac{\eta}{2 D}$. Hence
\begin{align*}
 \frac{1}{r_i}\m_\gamma(B_{2r_i}(x_i))\geq \frac{s}{D}\m_\gamma(J^{l,s}_{t_l})\geq \sum_{l=1}^s{C(k,n)}{t_l^n} \m_\gamma(J^{l,s})\geq {C(k,n,D)}{\eta^n} \m_\gamma(J).
\end{align*}
Integration w.r.t. $\mathfrak q$ on $Q_i$ yields
\begin{align*}
\hat C(k,n) r_i^{n-1}\geq \frac{1}{r_i} \mathcal H^n(B_{2r_i}(x_i))\geq {C(k,n,D,\eta)} \mathcal H^n(X_0\cap \mathfrak Q^{-1}(Q_i)).
\end{align*}
After summing up w.r.t. $i\in \N$ together with \eqref{nullset_estimate} and since $\{Q_i\}_{i\in \N}$ are disjoint, we obtain the claim and we proved the proposition.
\end{proof}
\subsection{Second application}\label{sec: Second application}
Let $u: X\rightarrow \R$ be a $1$-Lipschitz function, let $(\m_{\gamma})_{\gamma\in Q}$ be the induced disintegration {of $\mathcal H^n$}.
We pick a subset $\hat Q$ of full $\mathfrak q$ measure in $Q$ such that $R_u(x)=\overline{X_\gamma}$ for all $x\in X_\gamma$. By abuse of notation we write $\hat Q=Q$ and $\T_u=\mathfrak Q^{-1}(\hat Q)$. 

We say that  a unit speed geodesic $\gamma:[a,b]\rightarrow X$ is tangent to $Y$ if
 there exists $t_0\in [a,b]$ such that $\gamma(t_0)\in Y$ and $\dot{\gamma}(t_0)\in T_pY$.
We define 
\begin{align*}
Q^{\dagger}:= \left\{\gamma\in Q: \# \gamma^{-1}(Y)<\infty\right\}.
\end{align*}
\begin{lemma}\label{lem: infinite implies tangent}
If $\gamma\in Q\backslash Q^{\dagger}$, then $\gamma$ is tangent to $Y$.
\end{lemma}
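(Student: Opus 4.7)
My plan is to pick an accumulation point $t_0$ of $\gamma^{-1}(Y)$, then perform a blow-up at $p=\gamma(t_0)$ and read off $\dot\gamma(t_0)$ as a direction that must lie in $\Sigma_pY$. More concretely: since $\gamma^{-1}(Y)$ is infinite inside the compact interval $[a_\gamma,b_\gamma]$ and closed there (because $Y$ is closed in $Z$), it admits an accumulation point $t_0$ with $p:=\gamma(t_0)\in Y$ and a strictly monotone sequence $s_k\to t_0$ of distinct points with $q_k:=\gamma(s_k)\in Y$. After passing to a subsequence I may assume $s_k\downarrow t_0$ (the other case is symmetric and only flips the sign of $\dot\gamma(t_0)$); set $\epsilon_k:=s_k-t_0>0$. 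Tangency of $\gamma$ to $Y$ at $t_0$ amounts to $\dot\gamma(t_0)\in\Sigma_pY\subset\Sigma_pZ$, so this is what I aim to prove.

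Next, I will use the blow-up $(Z,\epsilon_k^{-1}d_Z,p)\to(T_pZ,o_p)$ in the pointed Gromov--Hausdorff sense. The segment $\gamma|_{[t_0,s_k]}$, reparametrized as $t\mapsto\gamma(t_0+t\epsilon_k)$ on $[0,1]$, becomes a unit speed geodesic from $p$ to $q_k$ of length $1$ in the rescaled space. By the definition of $\dot\gamma(t_0)\in\Sigma_pZ$ recalled in Section~\ref{intro:Alex} together with nonbranching of Alexandrov geodesics, these rescaled geodesics converge to the radial segment $t\mapsto t\cdot\dot\gamma(t_0)$ in the cone $T_pZ$; in particular the rescaled endpoints $q_k$ converge to the point $\xi:=1\cdot\dot\gamma(t_0)\in T_pZ$. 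On the other hand, the gluing analysis of tangent cones recalled at the end of Section~\ref{subsec:gluing} identifies $T_pZ=T_pX_0\cup_{\mathcal I'}T_pX_1$, with the extremal subset $Y=\partial X_0=\partial X_1$ having tangent cone at $p$ equal to $T_pY=C(\partial\Sigma_pX_0)=C(\Sigma_pY)$, embedded in $T_pZ$ as precisely the common seam along which the two tangent half-cones are glued. Since every $q_k$ lies in $Y$, its rescaled image in $T_pZ$ must converge into this subcone, forcing $\xi\in T_pY$. Because $T_pY$ is a metric cone over $\Sigma_pY$, this is equivalent to $\dot\gamma(t_0)\in\Sigma_pY$, which is exactly the desired tangency.

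I expect the technical heart of the argument to be the last step: the assertion that a sequence of points in the extremal subset $Y$ whose rescaled images converge inside the ambient blow-up $T_pZ$ has its limit inside the intrinsic tangent cone $T_pY$ -- equivalently, the identification of $T_pY$ with the explicit seam of $T_pZ$. This identification is precisely the content of Petrunin's analysis of tangent cones for glued Alexandrov spaces, as summarized in Section~\ref{subsec:gluing}, and once it is in hand the remainder of the proof reduces to the routine blow-up/first variation computation above.
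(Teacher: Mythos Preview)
Your proposal is correct and follows essentially the same route as the paper's proof: pick an accumulation point $t_0$ of the infinite closed set $\gamma^{-1}(Y)\subset[a_\gamma,b_\gamma]$, take a monotone sequence $s_k\to t_0$ with $\gamma(s_k)\in Y$, and blow up at $p=\gamma(t_0)$ to see that the rescaled points $\gamma(s_k)$ converge simultaneously to $\dot\gamma(t_0)$ and into $T_pY$, forcing $\dot\gamma(t_0)\in\Sigma_pY$. The paper's proof is terser (it does not spell out the identification of $T_pY$ as the seam of $T_pZ$ or the closedness argument for the accumulation point), but the ideas are identical.
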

\begin{proof}
If $\gamma\in Q\backslash Q^{\dagger}$, then $\#\gamma^{-1}(Y)=\infty$. Hence, after taking a subseqence one can find a strictly monotone sequence $t_i\in [a_\gamma,b_\gamma]$ such that $\gamma(t_i)\in Y$, $t_i\rightarrow t_0\in [a_\gamma,b_\gamma]$ and $\gamma(t_i)\rightarrow \gamma(t_0)\in Y$. In a blow up of $Y$ around $\gamma(t_0)$ the sequence $\gamma(t_i)$ converges to the the velocity vector of $\gamma$ at $t_0$. Hence, we conclude that $\dot{\gamma}(t_0)\in T_{\gamma(t_0)}Y$ and $\gamma$ is tangent to $Y$. 
\end{proof}

For $U\subset X$ open we write
\begin{align*}
{\mathcal H^n({\T_u\cap U})}=\int_Q\m_{\gamma}(U) d\mathfrak q(\gamma) = \int_{g^{-1}(U)} h_\gamma(r) dr\otimes d\mathfrak q(\gamma)
\end{align*}
where $g:\mathcal V\subset \R\times Q\rightarrow \T^b_u$ is the ray map defined in Subsection \ref{subsec:1Dlocalisation}. We also note that $(r,\gamma)\in \mathcal V \mapsto h_\gamma(r)$ is measurable. 

\begin{remark}
Let $B\subset \R\times Q$ be measurable. Then $g(B\times Q) =:\mathcal B\subset X$ is a measurable subset since $g$ is a Borel isomorphism. Then $\mathcal B\cap X_\gamma$ is measurable w.r.t. the induced measurable structure and by Fubini's theorem 
the map 
$$\gamma\in Q\mapsto L(\gamma|_{\gamma^{-1}(\mathcal B)})$$
is measurable.
We can {apply this} for the case when $B=(-\infty,0)\times Q$. It follows that
$\gamma\in Q\mapsto a_\gamma =L(\gamma|_{\gamma^{-1}(g((-\infty,0)\times Q))} \in \R$ is measurable. Similar for $b_\gamma$.
\end{remark}
\begin{remark}
Consider the map $\Phi_t: \R\times Q\rightarrow \R\times Q$, $\Phi_t(r,q)=(tr,q)$ for $t>0$. Then, it is clear that $\Phi_t(\mathcal V)=\mathcal V_t$ is a measurable subset of $\mathcal V$ for $ t\in (0,1]$. Moreover $g(\mathcal V_t)=\T^b_{u,t}$ is a measurable subset of $\T^b_u$ such that $X_\gamma\cap \T^b_{u,t}= t X_\gamma \subset (a_\gamma,b_\gamma)$. If $t\in (0,1)$, then $\mathcal H^n(\T^b_u\backslash \T^b_{u,t}) >0$.

Again by Fubinis theorem  $U\cap X_\gamma \cap \T_{u,t}^b= U\cap t X_\gamma$ is measurable in $X_\gamma$ for $\mathfrak q$-a.e. $\gamma\in Q$ and the map $$L_{U,t}:\gamma\in Q\mapsto  \mbox{L}(\gamma|_{(t a_\gamma, tb_\gamma)\cap \gamma^{-1}(U)})=\int 1_{U\cap tX_\gamma} d\mathcal L^1$$ is measurable. We note that the set $(ta_\gamma, tb_\gamma)\cap \gamma^{-1}(U)$ might not be an interval.

\end{remark}

Let $Y\subset X$ and consider $U_\eps=B_{\epsilon}(Y)$ for $\epsilon>0$. For $s\in \N$ and $t\in (0,1]$ we define
\begin{align*}
C_{\epsilon, s, t}=\left\{\gamma\in Q:  \mbox{L}(\gamma|_{\gamma^{-1}(U_\eps)\cap (ta_{\gamma},tb_\gamma)})>\epsilon s\right\}.
\end{align*} 
Further, we set 
\begin{align*}
C_{s,t}=\bigcup_{\epsilon>0}\bigcap_{\epsilon'{\leq}\epsilon} C_{\epsilon',s,t}=\{\gamma\in Q: \liminf_{\epsilon\rightarrow 0}\mbox{L}(\gamma|_{\gamma^{-1}(U_\epsilon)\cap (ta_\gamma, tb_\gamma)})/\epsilon \geq s\}
\end{align*}
and 
\begin{align*}
 C_t=\bigcap_{s\in \N}C_{s,t} =\{\gamma\in Q: \lim_{\epsilon\rightarrow 0}\mbox{L}(\gamma|_{\gamma^{-1}(U_\epsilon)\cap (ta_\gamma, tb_\gamma)})/\epsilon=\infty\}.
\end{align*}

\begin{lemma}\label{lemma: tan-t->Ct}
Let  $0<t\le 1$ and let $\gamma\in Q$. If $\gamma|_{[ta_\gamma, tb_\gamma]}$ is tangent to $Y$, then $\gamma\in C_t$.
\end{lemma}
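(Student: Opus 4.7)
The plan is to turn the tangency hypothesis into the quantitative estimate $d(\gamma(t_0+r),Y)=o(|r|)$ as $r\to 0$ near the tangent time (at least on the side of $t_0$ that lies in $(ta_\gamma,tb_\gamma)$), and then observe that such an estimate forces $\gamma$ to remain inside $U_\eps$ over parameter-intervals much longer than $\eps$, which is precisely the defining condition for $C_t$.

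First, unpack the hypothesis: fix $t_0\in[ta_\gamma,tb_\gamma]$ with $p:=\gamma(t_0)\in Y$ and $v:=\dot\gamma(t_0)\in T_pY$; since $\gamma$ is unit speed, $|v|=1$. The key step is to show $d(\gamma(t_0+r),Y)/|r|\to 0$ as $r\to 0$ from at least one side. For this, consider the pointed blow-up $(X,\tfrac{1}{\eps}d,p)\to(T_pX,o)$: by the very definition of $\dot\gamma(t_0)=v$ the rescaled arcs $r\mapsto\gamma(t_0+\eps r)$ converge to the ray $r\mapsto rv$ in $T_pX$, while $(Y,\tfrac{1}{\eps}d|_Y,p)$ converges to $T_pY$ as a subcone of $T_pX$, as recalled in Section~\ref{subsec:gluing}. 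Since $v\in T_pY$ and $T_pY$ is a cone, the ray $rv$ lies in $T_pY$ for $r\ge 0$, so the distance from $rv$ to $T_pY$ in the limit vanishes; undoing the rescaling with $\eps=|r|$ yields $d(\gamma(t_0+r),Y)=o(|r|)$ on the relevant side. A hands-on alternative is to pick $y_n\in Y$ with $\uparrow_p^{y_n}\to v$ in $\Sigma_p$ and $d(p,y_n)=\eps_n\to 0$, and to apply a first-variation comparison to obtain $d(\gamma(t_0+\eps_n),y_n)=o(\eps_n)$.

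From this estimate the conclusion is essentially bookkeeping. Given $s\in\N$, choose $\delta_s>0$ such that $d(\gamma(t_0+r),Y)<|r|/s$ for all $0<|r|<\delta_s$. For $\eps<\delta_s/s$ and $|r|\le s\eps$ we then have $d(\gamma(t_0+r),Y)<\eps$, i.e.\ $\gamma(t_0+r)\in U_\eps$. The corresponding set of admissible $r$, intersected with $(ta_\gamma-t_0,tb_\gamma-t_0)$, has Lebesgue measure at least $s\eps$ (the full two-sided interval of length $2s\eps$ when $t_0$ is interior to $[ta_\gamma,tb_\gamma]$, and at least $s\eps$ on a single side when $t_0$ is an endpoint). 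Since $\gamma$ is unit speed this gives $L(\gamma|_{\gamma^{-1}(U_\eps)\cap(ta_\gamma,tb_\gamma)})\ge s\eps$ for all sufficiently small $\eps$, whence $\liminf_{\eps\to 0}L/\eps\ge s$. As $s\in\N$ was arbitrary, $\gamma\in C_{s,t}$ for every $s$, so $\gamma\in C_t$.

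The main obstacle is the key estimate: passing from the \emph{infinitesimal} condition $v\in T_pY$, which lives in the blown-up cone $T_pX$, to a \emph{uniform} upper bound on the ambient distance function $d(\,\cdot\,,Y)$ near $p$. This hinges on the identification of the blow-up of the extremal subset $Y=\partial X_i\subset Z$ with the subcone $T_pY\subset T_pX\subset T_pZ$ recalled in Section~\ref{subsec:gluing}; once that identification is invoked, the remainder of the argument is just rescaling together with Lebesgue-length bookkeeping.
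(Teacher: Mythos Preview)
Your argument is correct and rests on the same blow-up idea as the paper's proof, but you organize it directly rather than by contradiction. The paper assumes $\gamma\notin C_t$, extracts a subsequence with bounded ratio $L(\gamma|_{I^{\epsilon_i}})/\epsilon_i$, rescales by the length $L_i$ of the maximal subinterval $I^{\epsilon_i}\ni t_0$, and obtains in the limit a point of the tangent ray at positive distance from $T_pY$, contradicting $\dot\gamma(t_0)\in T_pY$. You instead pass to the blow-up once to get the quantitative estimate $d(\gamma(t_0+r),Y)=o(r)$ and then convert it into the length lower bound by elementary bookkeeping; this is a bit more transparent and avoids juggling the two scales $\epsilon_i$ and $L_i$.

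One small point to tighten: your choice ``$d(\gamma(t_0+r),Y)<|r|/s$ for all $0<|r|<\delta_s$'' is only guaranteed on the side corresponding to the tangent vector $v=\dot\gamma(t_0)$; the other side would require $-v\in T_pY$, which is not part of the hypothesis. This is harmless, since one side already yields length $\ge s\epsilon$ inside $(ta_\gamma,tb_\gamma)$ (and you do say ``on the relevant side'' earlier and treat the endpoint case correctly), but the quantifier in that displayed sentence should be restricted accordingly.
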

\begin{proof}
For the proof we ignore $t\in (0,1]$ and consider $\gamma|_{[a_\gamma,b_\gamma]}$.

Let $\gamma\in Q$ be tangent to $Y$. Assume $\gamma\notin C$.
Then there exists a sequence $(\epsilon_i)_{i\in \N}$ such that $\lim_{i\rightarrow \infty} {L(\gamma|_{\gamma^{-1}(B_{\epsilon_i}(Y))\cap (a_\gamma,b_\gamma)})}/{\epsilon_i}=C\in [0,\infty)$. 
By assumption there exists $t_0\in [a_\gamma, b_\gamma]$ such that $\gamma(t_0)\in Y$. Hence $t_0\in \gamma^{-1}(B_{\epsilon_i}(Y))\cap [a_\gamma,b_\gamma]$. 
There exists a maximal interval $I^{\epsilon_i}$ contained in $\gamma^{-1}(B_{\epsilon_i}(Y))\cap [a_\gamma,b_\gamma]$  such that $t_0\in I^{\epsilon_i}$. 
After taking another subsequence we still have
$$\infty>\lim_{i\rightarrow \infty} \frac{L(\gamma|_{\gamma^{-1}(B_{\epsilon_i}(Y))\cap (a_\gamma,b_\gamma)})}{\epsilon_i}\geq \lim_{i\rightarrow \infty }\frac{L(\gamma|_{I^{\epsilon_i}\cap (a_\gamma,b_\gamma)})}{\epsilon_i}=:C'\geq 0$$ 
and  $L(\gamma|_{I^{\epsilon_i}\cap (a_\gamma,b_\gamma)})=:L_i\rightarrow 0$. We set $\gamma_i=\gamma|_{I^{\epsilon_i}\cap (a_\gamma,b_\gamma)}$. Since $I^{\epsilon_i}$ is maximal such that $\mbox{Im}(\gamma_i)\subset B_{\epsilon_i}(Y)$, we have 
$\sup_{y\in Y, t\in I^{\epsilon_i}}d(y,\gamma(t))\geq \epsilon_i.$
In the rescaled space $(Z, \frac{1}{L_i} d_Z)$ the geodesic $\gamma_i$ is a geodesic of length $1$ and $$\sup_{y\in Y, t\in I_{\epsilon_i}}\frac{1}{L_i}d(y,\gamma_i(t_0))\geq C'/2$$
for $i\in \N$ suffienciently large.
By the proof of Petrunin's glued space theorem  we know that $(Y, \frac{1}{L_i}d|_{Y})$ converges in GH sense to $T_{\gamma(t_0)}Y$.
Therefore, for $\epsilon_i\rightarrow 0$ a sequence of points $\gamma_i(t_i)$ converges to $v\in T_{\gamma(t_0)}Z$ such that $\sup_{w\in T_{\gamma(t_0)}Y} \angle (w,v)\geq C'/2$. This is a contradiction since the tangent vector $\dot{\gamma}(t_0)\in T_{\gamma(t_0)} Y$ and tangent vector of geodesics in Alexandrov spaces are well-defined and unique.

Hence, for any sequence $\epsilon_i\rightarrow 0$ it follow that $\frac{L_i}{\epsilon_i}\rightarrow \infty$ and therefore $\gamma \in C_t$. %
\end{proof}
\begin{corollary}\label{cor: tan->Ct}
Let $\gamma\in Q$. If {$\gamma|_{(a_\gamma,b_\gamma)}$} is tangent to $Y$, then $\gamma\in {C= \bigcup_{t\in (0,1)}}C_t$. 
\end{corollary}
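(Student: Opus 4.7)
The plan is to reduce the corollary directly to Lemma \ref{lemma: tan-t->Ct}. The previous lemma already produces membership in $C_t$ whenever the tangency point to $Y$ occurs inside the rescaled interval $[ta_\gamma, tb_\gamma]$; what remains is to verify that the hypothesis ``$\gamma|_{(a_\gamma,b_\gamma)}$ is tangent to $Y$'' always yields some admissible $t \in (0,1)$ for which the tangency point is captured by $[ta_\gamma,tb_\gamma]$.

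Concretely, by the tangency assumption there exists $t_0 \in (a_\gamma,b_\gamma)$ with $\gamma(t_0)\in Y$ and $\dot\gamma(t_0)\in T_{\gamma(t_0)}Y$. Recall from the $1D$-localisation setup that the parametrization is chosen so that $0\in (a_\gamma,b_\gamma)$, hence $a_\gamma<0<b_\gamma$. For $t\in (0,1)$ the rescaled interval $[ta_\gamma,tb_\gamma]$ satisfies $ta_\gamma\in (a_\gamma,0)$ and $tb_\gamma\in (0,b_\gamma)$, and as $t\uparrow 1$ these endpoints exhaust $(a_\gamma,b_\gamma)$. Since $t_0$ is strictly inside $(a_\gamma,b_\gamma)$, we have $t_0/b_\gamma<1$ (when $t_0\ge 0$) or $t_0/a_\gamma<1$ (when $t_0<0$, the ratio being positive and less than one). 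In either case we can choose $t\in (\max(t_0/b_\gamma,t_0/a_\gamma,0),1)$, which ensures $ta_\gamma \le t_0 \le tb_\gamma$.

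For this $t$, the restriction $\gamma|_{[ta_\gamma,tb_\gamma]}$ still passes through $\gamma(t_0)\in Y$ with the same velocity $\dot\gamma(t_0)\in T_{\gamma(t_0)}Y$, so it is tangent to $Y$ in the sense defined above. Lemma \ref{lemma: tan-t->Ct} then gives $\gamma\in C_t$, and hence $\gamma\in \bigcup_{t\in (0,1)}C_t=C$, which is the conclusion.

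There is essentially no obstacle here; the only point to keep in mind is that the strict inclusion $t_0\in (a_\gamma,b_\gamma)$ (rather than in the closed interval) is what makes the ratios $t_0/b_\gamma$ and $t_0/a_\gamma$ strictly less than $1$, so the reduction to Lemma \ref{lemma: tan-t->Ct} goes through with a genuine $t\in (0,1)$.
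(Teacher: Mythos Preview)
Your argument is correct and is exactly the intended reduction: the paper leaves the corollary unproved, treating it as the obvious consequence of Lemma~\ref{lemma: tan-t->Ct} obtained by choosing $t\in(0,1)$ large enough that the tangency point $t_0\in(a_\gamma,b_\gamma)$ lands in $[ta_\gamma,tb_\gamma]$. Your explicit verification of this choice (using $a_\gamma<0<b_\gamma$) is fine and matches the paper's setup.
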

\begin{lemma}\label{lemma:tangentgeodesics}
Let $\mathfrak q$ be associated to $u$. Then $\mathfrak q(Q\cap \bigcup_{t\in (0,1)} C_t)=0$.
\end{lemma}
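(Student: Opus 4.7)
My plan is to reduce to a fixed $t\in(0,1)$ and then combine the disintegration with a tube volume estimate for $Y$ via Fatou's lemma. Since $(ta_\gamma,tb_\gamma)\subseteq(t'a_\gamma,t'b_\gamma)$ when $0<t\le t'<1$ (recall $a_\gamma<0<b_\gamma$), we have $C_t\subseteq C_{t'}$, so $\bigcup_{t\in(0,1)}C_t=\bigcup_{n\in\N}C_{t_n}$ for any sequence $t_n\uparrow 1$. It therefore suffices to prove $\mathfrak{q}(C_t)=0$ for each fixed $t\in(0,1)$.

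Two ingredients drive the argument. First, a density lower bound: by Theorem~\ref{th:1dlocalisation} and the subsequent remark, $h_\gamma$ is locally Lipschitz on $(a_\gamma,b_\gamma)$ for $\mathfrak{q}$-a.e.\ $\gamma$; combined with the concavity of $h_\gamma^{1/(N-1)}$ and the fact that $\m_\gamma$ is nonzero, this forces $h_\gamma>0$ on the open interval. Since $[ta_\gamma,tb_\gamma]\Subset(a_\gamma,b_\gamma)$, the infimum $\inf_{[ta_\gamma,tb_\gamma]}h_\gamma$ is strictly positive for $\mathfrak q$-a.e.\ $\gamma$, so setting $Q_\eta:=\{\gamma\in Q:h_\gamma\ge\eta\text{ on }[ta_\gamma,tb_\gamma]\}$ yields $\mathfrak q\bigl(Q\setminus\bigcup_{\eta>0}Q_\eta\bigr)=0$. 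Second, a tube estimate $\mathcal H^n(U_\epsilon)\le C\epsilon$ for small $\epsilon>0$: since the Alexandrov boundary $Y$ is $(n-1)$-rectifiable with locally finite $\mathcal H^{n-1}$-measure, covering $Y$ (intersected with a large ball) by $\epsilon$-balls centered on $Y$ (at most $\sim \epsilon^{-(n-1)}$ many), passing to their $2\epsilon$-enlargements to cover $U_\epsilon$, and applying Bishop--Gromov on the $CD(k(n-1),n)$-space $Z$ to bound each such ball by $C\epsilon^n$, gives the claim.

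Combining these via the disintegration: for $\gamma\in Q_\eta$, since $\gamma$ is arclength parametrized and $h_\gamma\ge\eta$ on $[ta_\gamma,tb_\gamma]$,
\[
L\bigl(\gamma|_{\gamma^{-1}(U_\epsilon)\cap(ta_\gamma,tb_\gamma)}\bigr)=\int_{[ta_\gamma,tb_\gamma]}\mathbf{1}_{U_\epsilon}(\gamma(r))\,dr\le\frac{1}{\eta}\int_{[ta_\gamma,tb_\gamma]}\mathbf{1}_{U_\epsilon}(\gamma(r))\,h_\gamma(r)\,dr\le\frac{\m_\gamma(U_\epsilon)}{\eta}.
\]
Integrating over $Q_\eta$, using $\int_Q \m_\gamma\,d\mathfrak q=\mathcal H^n|_{\T^b_u}$, dividing by $\epsilon$, and invoking the tube estimate yields $\int_{Q_\eta}\epsilon^{-1}L\bigl(\gamma|_{\gamma^{-1}(U_\epsilon)\cap(ta_\gamma,tb_\gamma)}\bigr)\,d\mathfrak q(\gamma)\le C/\eta$. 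Fatou's lemma then gives
\[
\int_{Q_\eta}\liminf_{\epsilon\downarrow 0}\frac{L\bigl(\gamma|_{\gamma^{-1}(U_\epsilon)\cap(ta_\gamma,tb_\gamma)}\bigr)}{\epsilon}\,d\mathfrak q(\gamma)\le\frac{C}{\eta}.
\]
For $\gamma\in C_t\cap Q_\eta$ the integrand equals $+\infty$, so $\mathfrak q(C_t\cap Q_\eta)=0$; letting $\eta\downarrow 0$ concludes $\mathfrak q(C_t)=0$.

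The main subtle point is the tube estimate $\mathcal H^n(U_\epsilon)=O(\epsilon)$, which rests on the $(n-1)$-rectifiability of the Alexandrov boundary combined with Bishop--Gromov on $Z$; everything else is a fairly standard application of the disintegration together with Fatou's lemma.
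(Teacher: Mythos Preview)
Your argument is correct and follows the same overall skeleton as the paper's proof: reduce to a fixed $t\in(0,1)$, combine the disintegration with the tube bound $\mathcal H^n(U_\epsilon)=O(\epsilon)$, and conclude $\mathfrak q(C_t)=0$. The execution, however, is genuinely different.

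The paper stratifies $Q$ by $Q^l=\{\,l\ge|a_\gamma|,|b_\gamma|\ge 1/l\,\}$, then invokes the explicit one-dimensional $CD$ comparison (the estimate $h_\gamma(r)\ge C(k,n,t,l)\,h_\gamma(\tau)$ for $r\in[ta_\gamma,tb_\gamma]$ and $\tau$ in the outer portion $(a_\gamma,ta_\gamma)\cup(tb_\gamma,b_\gamma)$) to bound $\m_\gamma(U_\epsilon)$ from below by a fixed multiple of $\m_\gamma(\T^b_u\setminus\T^b_{u,t})$, and then sends $\epsilon\to 0$ and $s\to\infty$ in the definition of $C_{\epsilon,s,t}$. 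You instead stratify by $Q_\eta=\{\,\inf_{[ta_\gamma,tb_\gamma]}h_\gamma\ge\eta\,\}$, use only the qualitative strict positivity of $h_\gamma$ on $(a_\gamma,b_\gamma)$, and close with Fatou's lemma. Your route is shorter and avoids the quantitative density comparison entirely; the paper's route has the advantage that the constants are explicit in terms of $(k,n,t,l)$, which is not needed here but is closer in spirit to the rest of the localisation machinery.

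One point to tighten: your justification of the tube estimate is not quite complete. Rectifiability together with locally finite $\mathcal H^{n-1}$-measure does \emph{not} by itself yield an $\epsilon$-covering of $Y$ by $O(\epsilon^{-(n-1)})$ balls; for that you would need lower Ahlfors $(n-1)$-regularity of $Y$ (which does hold for Alexandrov boundaries but is an additional input). The paper obtains $\mathcal H^n(B_\epsilon(Y))\le M\epsilon$ by a different and more robust route: semiconcavity of the distance function to the boundary, Lipschitz continuity of the associated gradient flow, and the coarea formula. Either justification works, but yours as written needs the extra regularity of $Y$ made explicit.
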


\begin{proof} It is clearly enough to show that for any  $t\in (0,1)$ it holds that $\mathfrak q(Q\cap C_t)=0$. Therefore in the following we work with a fixed $t$.

We recall that {$a_\gamma<0<b_\gamma$},  $\gamma\in Q\mapsto a_\gamma, b_\gamma$ are measurable and $Q=\bigcup_{l\in \N}\{l\geq |b_\gamma|, |a_\gamma| \geq \frac{1}{l}\}$.
It is obviously enough to prove the lemma for $Q^l=\{l\geq |a_\gamma|, |b_\gamma|\geq \frac{1}{l}\}$  for arbitrary $l\in \N$. Therefore
we fix $l\in N$ and replace $Q$ with $Q^l$. By abuse of notation we will drop the superscript $l$ for the rest of the proof.
By rescaling the whole space with $4l$ we can assume that {$4\le |a_\gamma|, |b_\gamma| \le 4l^2$ }for each $\gamma\in Q$.

Let $C_{\epsilon, s, t}$ be defined as  before for $\epsilon\in (0,\epsilon_0)$ and $s\in \N$.

We pick $\gamma\in C_{\epsilon,s, t}$ and consider $\gamma^{-1}(B_{\epsilon}(Y))\cap (ta_\gamma, tb_\gamma)=: I_{\gamma, \epsilon}$. We set $L(\gamma|_{I_{\gamma, \epsilon}})=:L^{\epsilon}$.

We observe that $$4l^2\geq (1-t)|a_\gamma|\geq (1-t)4,\ \ \  4l^2\geq (1-t)|b_\gamma| \geq (1-t)4.$$ 
We pick $r\in I_{\gamma, \epsilon}$ and $\tau \in (a_\gamma, ta_\gamma)\cup (tb_\gamma, b_\gamma)$. Theorem \ref{th:1dlocalisation} implies that $([a_\gamma, b_\gamma], h_\gamma dr)$ satisfies the condition $CD(k(n-1),n)$. Then, the following estimate holds (c.f. \cite[Inequality (4.1)]{cavmon})
\begin{align*}
h_\gamma(r)&\geq \frac{\sin^{n-1}_{k}((r-a_\gamma)\wedge (b_\gamma-r))}{\sin^{k-1}_{k}((\tau-a_\gamma)\wedge (b_\gamma-\tau))}{h_\gamma(\tau)} \\ &\geq \frac{\sin^{n-1}_{k}((1-t)4)}{\sin^{n-1}_{k}4l^2} h_\gamma(\tau)= C(k,n,t,l) h_\gamma(\tau).
\end{align*}
for a universal constant $C(k,n,t,l)$.
We take the mean value w.r.t. $\mathcal L^1$ on both sides and obtain
\begin{align*}
\frac{1}{L^{\epsilon}}\int_{I_{\gamma, \epsilon}} h_\gamma d\mathcal L^1 \geq C(k,n,t,l) \frac{1}{4l^2}\int_{(a_\gamma, ta_\gamma)\cup (tb_\gamma, b_\gamma)} h_\gamma d\mathcal L^1.
\end{align*}
Hence, after integrating w.r.t. $\mathfrak q$ on $C_{\epsilon,s,t}$ and taking into account $\frac{1}{\epsilon s}\geq \frac{1}{L^{\epsilon}}$ by definition of $C_{\epsilon,s,t}$, it follows
\begin{align*}
\frac{1}{\epsilon s}\mathcal H^n(B_{\epsilon}(Y))&\geq 
\frac{1}{s\epsilon}\int_{C_{\epsilon,s,t}}\m_{\gamma}(B_{\epsilon}(Y))d\mathfrak q(\gamma)\\
&\geq\frac{1}{L^{\epsilon}}\int_{C_{\epsilon,s,t}} \int_{I_{\gamma, \epsilon}} h_\gamma d\mathcal L^1 d\mathfrak q(\gamma) \\
&\geq \hat C \int_{C_{\epsilon,s,t}}\int_{(a_\gamma, ta_\gamma)\cup (tb_\gamma, b_\gamma)} h_\gamma d\mathcal L^1 d\mathfrak q(\gamma)\\
&\geq \hat C \int_{C_{\epsilon,s,t}} \m_\gamma(\T^b_u\backslash \T^b_{u,t})d\mathfrak q(\gamma)
\end{align*}
where $\hat C=\frac{1}{2l} C(k,n,t,l)$.
It is known that $\mathcal H^n(B_{\epsilon}(Y))\leq \epsilon M$ for some constant $M>0$ provided $\epsilon>0$ is sufficiently small. {
This follows from semiconcavity of the boundary distance function in Alexandrov spaces, Lipschitz continuity of the induced gradient flow and the coarea formula. }
Hence
\begin{align*}
\frac{M}{s} \geq C(K,N,k,t) \int_{C_{\epsilon,s,t}} \m_\gamma(\T^b_u\backslash \T^b_{u,t})d\mathfrak q(\gamma).
\end{align*}
If we take limit for $\epsilon\rightarrow 0$, we obtain

\begin{align*}
\frac{M}{s} \geq C(K,N,k,t) \int_{C_{s,t}} \m_\gamma(\T^b_u\backslash \T^b_{u,t})d\mathfrak q(\gamma).
\end{align*}
Finally, for $s\rightarrow \infty$ it follows
\begin{align*}
0 = \int_{C_{t}} \m_\gamma(\T^b_u\backslash \T^b_{u,t})d\mathfrak q(\gamma).
\end{align*}
But by construction of $\T^b_{u,t}$ we know that $\m_\gamma(\T^b_u\backslash \T^b_{u,t})$ is positive for every $\gamma\in Q$ if $t\in (0,1)$. Therefore, it follows $\mathfrak q(C_t)=0$.
\end{proof}
Combining the above lemma with Corollary~\ref{cor: tan->Ct} gives
\begin{corollary}\label{cor: tan-measure 0}
Let $\mathfrak q$ be associated to $u$. Then $\mathfrak q(\gamma\in Q:  \gamma|_{(a_\gamma,b_\gamma)}$ is tangent to $Y)=0$.
\end{corollary}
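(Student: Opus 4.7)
The plan is to combine the two preceding results directly. Set $A := \{\gamma \in Q : \gamma|_{(a_\gamma,b_\gamma)} \text{ is tangent to } Y\}$; the goal is to show $\mathfrak q(A) = 0$.

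First I would use Corollary \ref{cor: tan->Ct} to obtain the pointwise inclusion $A \subseteq C := \bigcup_{t \in (0,1)} C_t$. Unpacking that corollary, given $\gamma \in A$ tangent at some $t_0 \in (a_\gamma, b_\gamma)$, one picks $t \in (0,1)$ with $t_0 \in (ta_\gamma, tb_\gamma)$; this is always possible because $a_\gamma < 0 < b_\gamma$ and $t_0$ lies strictly between them, so one only needs $t > \max\{t_0/b_\gamma, t_0/a_\gamma\}$ (interpreted trivially when the relevant ratio is nonpositive). With this choice, $\gamma|_{[ta_\gamma, tb_\gamma]}$ is still tangent to $Y$ at $t_0$, and Lemma \ref{lemma: tan-t->Ct} places $\gamma$ in $C_t \subseteq C$.

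Second, Lemma \ref{lemma:tangentgeodesics} asserts $\mathfrak q(Q \cap C) = 0$, so monotonicity of $\mathfrak q$ immediately yields $\mathfrak q(A) \leq \mathfrak q(C) = 0$. There is no genuine obstacle at this stage: all the analytic content, namely the geometric convergence argument identifying tangential geodesics with the sets $C_t$ (via the blow-up of $Y$ and uniqueness of tangent vectors of geodesics in Alexandrov spaces, Lemma \ref{lemma: tan-t->Ct}) and the Brunn--Minkowski / $CD(k(n-1),n)$ density comparison showing each $C_t$ is $\mathfrak q$-null (Lemma \ref{lemma:tangentgeodesics}), has already been carried out. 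The present corollary is simply the clean packaging of these two ingredients.
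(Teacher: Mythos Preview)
Your proposal is correct and follows exactly the paper's approach: the paper simply states that the corollary is obtained by combining Lemma~\ref{lemma:tangentgeodesics} with Corollary~\ref{cor: tan->Ct}, and you have unpacked precisely that combination, including the (straightforward) verification that any tangency point $t_0\in(a_\gamma,b_\gamma)$ lies in some $(ta_\gamma,tb_\gamma)$ with $t\in(0,1)$.
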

Let us remark here that we \emph{ do not} claim that the set of geodesics in $Q$  which are tangent to $Y$ at one of the endpoints has measure zero. We suspect this is true but this is not needed for the applications.

As a first consequence of Proposition \ref{prop} and Corollary \ref{cor: tan-measure 0} we obtain the following corollary.

\begin{corollary}\label{cor:important}
Let $x_1\in X_1\backslash Y$. Then, {for} $\mathcal H^n_Z$-a.e. point $x_0\in X_0$ the geodesic that connects $x_0$ and $x_1$ intersects with $Y$ only finitely many times and in any intersection point $\Phi_Z|_Y$ is differentiable.
\end{corollary}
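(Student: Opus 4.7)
My plan is to apply the one-dimensional localisation of $\mathcal H^n_Z$ associated to the $1$-Lipschitz function $u=d(x_1,\cdot)$ and then combine Proposition~\ref{prop} with Corollary~\ref{cor: tan-measure 0}. Since $x_1\in X_1\setminus Y$ and the latter is open in $Z$ by Remark~\ref{rem:local}, one can fix $\eta>0$ with $B_\eta(x_1)\subset X_1$, so the hypotheses of Proposition~\ref{prop} are met. Let $(X_\gamma)_{\gamma\in Q}$, $\mathfrak Q:\mathcal T_u^b\to Q$ and $\mathfrak q$ be the resulting disintegration data. Every point of $Z$ is joined to $x_1$ by at least one minimising geodesic, so $\mathcal T_u=Z\setminus\{x_1\}$, and Lemma~\ref{somelemma} yields $\mathcal H^n(\mathcal T_u\setminus\mathcal T_u^b)=0$. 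Hence for $\mathcal H^n$-a.e.\ $x_0\in X_0$ the point $x_0$ sits on a unique transport ray $X_{\mathfrak Q(x_0)}$, which coincides with the unique minimising geodesic from $x_0$ to $x_1$.

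For the differentiability part, observe that $\Phi_Z|_Y=\Phi_0|_{\partial X_0}$ by construction. Since $\Phi_0$ is semi-concave, Lemma~\ref{lemma} supplies a set $N\subset Y$ with $\mathcal H^{n-1}(N)=0$ such that $\Phi_Z|_Y$ is differentiable on $Y\setminus N$. Proposition~\ref{prop} applied to this $N$ gives
\[
\mathcal H^n\bigl(X_0\cap\mathfrak Q^{-1}\{\gamma\in Q : \gamma(t)\in N \text{ for some } t\in [0,L(\gamma))\}\bigr)=0.
\]
Discarding additionally the $\mathcal H^n$-null set $X_0\cap Y$, I conclude that for $\mathcal H^n$-a.e.\ $x_0\in X_0$ every intersection point of the geodesic from $x_0$ to $x_1$ with $Y$ lies in $Y\setminus N$, hence is a point of differentiability of $\Phi_Z|_Y$.

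For the finiteness assertion, Lemma~\ref{lem: infinite implies tangent} says that if $\gamma\in Q\setminus Q^\dagger$, i.e.\ if $\gamma$ meets $Y$ infinitely many times, then $\gamma$ is tangent to $Y$ at some $t_0\in[a_\gamma,b_\gamma]$. For the $\mathcal H^n$-a.e.\ $x_0\in X_0$ considered above, both endpoints of the transport ray lie outside $Y$, the $x_1$ endpoint by hypothesis and the $x_0$ endpoint because $\mathcal H^n(Y)=0$, so the tangency time $t_0$ must be interior, i.e.\ $t_0\in(a_\gamma,b_\gamma)$. Corollary~\ref{cor: tan-measure 0} then shows that the set of such $\gamma$ has $\mathfrak q$-measure zero; by the strongly consistent disintegration $\mathcal H^n|_{\mathcal T_u^b}=\int\m_\gamma\,d\mathfrak q(\gamma)$ this pulls back to an $\mathcal H^n$-null subset of $X_0$.

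Combining the two null sets yields the claim. The point that requires some care, and which I view as the main obstacle, is endpoint bookkeeping: Proposition~\ref{prop} only controls $t\in[0,L(\gamma))$ and Corollary~\ref{cor: tan-measure 0} only controls tangency on the open interval $(a_\gamma,b_\gamma)$, so neither statement alone rules out pathological behaviour at the endpoints of the transport ray. The argument goes through precisely because $x_1\notin Y$ by assumption and the other endpoint $x_0$ can be taken outside $Y$ after removing the $\mathcal H^n$-null set $X_0\cap Y$, which forces any infinite intersection with $Y$ to come from an interior tangency controlled by Corollary~\ref{cor: tan-measure 0}.
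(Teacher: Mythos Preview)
Your approach is exactly the one the paper intends: combine Proposition~\ref{prop} (applied to the $\mathcal H^{n-1}$--null non-differentiability set $N\subset Y$ supplied by Lemma~\ref{lemma}) with Lemma~\ref{lem: infinite implies tangent} and Corollary~\ref{cor: tan-measure 0}, using the localisation for $u=d(x_1,\cdot)$. The paper records the corollary without a written proof, so your write-up in fact supplies more detail than the paper gives.

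One small point to tighten. You write that ``both endpoints of the transport ray lie outside $Y$, the $x_1$ endpoint by hypothesis and the $x_0$ endpoint because $\mathcal H^n(Y)=0$''. In general the transport ray $X_{\mathfrak Q(x_0)}$ may extend strictly beyond $x_0$, so $x_0$ need not equal $\gamma(a_\gamma)$, and nothing prevents $\gamma(a_\gamma)\in Y$. What you actually need is more modest and still immediate: the geodesic \emph{from $x_0$ to $x_1$} is the sub-segment $\gamma|_{[s_0,b_\gamma]}$ with $\gamma(s_0)=x_0$, and if this sub-segment meets $Y$ infinitely often the accumulation point lies in $[s_0,b_\gamma]$; since $x_0\notin Y$ (after discarding the $\mathcal H^n$--null set $Y$) and $x_1\notin Y$, the accumulation point lies in $(s_0,b_\gamma)\subset(a_\gamma,b_\gamma)$, so Corollary~\ref{cor: tan-measure 0} applies. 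With that one-sentence correction your argument is complete.
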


\section{Semiconcave functions on glued spaces}

\begin{lemma} \label{gluingcondition}
 Let $(X,d)$ be an $n$-dimensional Alexandrov space and let $\Phi: X\rightarrow \R$ be a double semi-concave function. 

Then $\Phi$ is semi-concave in the usual sense and for any $p\in \partial X$ it holds that $d\Phi(v)\leq 0\ \mbox{ for any normal vector }v\in \Sigma_p. $
\end{lemma}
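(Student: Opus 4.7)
The plan is to exploit the canonical isometric embedding $\iota_0\co X\hookrightarrow \hat X$ of $X$ into its double as the first copy, which satisfies $P\circ\iota_0=\mathrm{id}_X$, together with the involution that swaps the two copies. For the first assertion I would lift any unit-speed geodesic $\gamma$ in $X$ to the geodesic $\iota_0\circ\gamma$ in $\hat X$; since $\Phi\circ P$ is semi-concave on $\hat X$ and $\Phi\circ\gamma=(\Phi\circ P)\circ(\iota_0\circ\gamma)$, the desired semi-concavity of $\Phi$ on $X$ is immediate.

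The substantive step is verifying $d\Phi(v)\leq 0$ for $p\in\partial X$ and a normal vector $v\in\Sigma_p$. My strategy is to produce a local geodesic $\sigma\co(-\epsilon,\epsilon)\to \hat X$ through $p$ with $\dot\sigma(0^+)=v$ lying in the first copy of $X$ and $\dot\sigma(0^-)=v'$ lying in the second, where $v'$ denotes the mirror of $v$. Granting such a $\sigma$, both halves descend under $P$ to the same arclength-parametrized geodesic $\gamma_v$ in $X$, so $f(t):=\Phi\circ P(\sigma(t))=\Phi(\gamma_v(|t|))$, giving
\[ \tfrac{d^+}{dt}f(0)=d\Phi(v), \qquad \tfrac{d^-}{dt}f(0)=-d\Phi(v). \]
Semi-concavity of $\Phi\circ P$ along $\sigma$ then forces $\tfrac{d^+}{dt}f(0)\leq\tfrac{d^-}{dt}f(0)$, which collapses to $d\Phi(v)\leq 0$.

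To build $\sigma$, I would first prove $\angle_{\Sigma_p\hat X}(v,v')=\pi$. Using the identification $\Sigma_p\hat X=\Sigma_pX\cup_{\mathcal I'}\Sigma_pX$ recalled in Subsection~\ref{subsec:gluing}, any curve from $v$ to $v'$ has to cross the seam $\partial\Sigma_pX$ at some point $w$; normality of $v$ and $v'$ gives $\angle(v,w)=\angle(v',w)=\pi/2$, so the curve has length at least $\pi$, while $\Sigma_p\hat X$ has curvature $\geq 1$ and thus diameter at most $\pi$. Once $\angle(v,v')=\pi$ is in hand, Alexandrov hinge/Toponogov comparison in $\hat X$ upgrades this angle identity into the statement that the concatenation of $\gamma_v$ and $\gamma_{v'}$ at $p$ is a genuine local geodesic. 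The main obstacle is the angle-$\pi$ claim itself: it rests on the description of $\Sigma_p\hat X$ as a glued spherical Alexandrov space (built into Petrunin's analysis) and on reading normality as the statement that $v$ sits at distance exactly $\pi/2$ from every point of $\partial\Sigma_pX$; after that is secured, passing to the local geodesic in $\hat X$ and computing the one-sided derivatives of $f$ are routine.
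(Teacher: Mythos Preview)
Your approach and the paper's are essentially the same: both establish $\angle_{\Sigma_p\hat X}(v,v')=\pi$ (the paper simply asserts $\Sigma_p\hat X=\widehat{\Sigma_pX}$ and that $v$ and its mirror are antipodal), then use that along the resulting line the relevant function is concave and invariant under the reflection swapping the two copies, which forces $d\Phi(v)\le 0$. The only difference is the level at which concavity is invoked: the paper works in the tangent cone $T_p\hat X$, using that the differential $d(\Phi\circ P)_p\colon T_p\hat X\to\R$ is concave and $1$-homogeneous along the line through $v$ and $v'$, whereas you work in $\hat X$ itself, building an actual geodesic $\sigma$ through $p$ and applying semi-concavity of $\Phi\circ P$ directly along it.

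The tangent-cone route is slightly cleaner because it sidesteps a small gap in your construction: to build $\sigma$ you need geodesics $\gamma_v,\gamma_{v'}$ in $\hat X$ emanating from $p$ with initial directions exactly $v$ and $v'$, but a normal vector $v\in\Sigma_pX$ lies only in the \emph{completion} of the set of geodesic directions and need not itself be tangent to a geodesic. This is easily repaired (approximate $v$ by genuine geodesic directions and use Lipschitz continuity of the differential, or pass to the tangent cone as the paper does), but as written your $\sigma$ may fail to exist.
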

\begin{proof}
A function $\Phi: X\rightarrow \R$ is semi-concave if $\Phi\circ P$ is semi-concave on the double space $\hat X$ that is an Alexandrov space without boundary. In particular, for any geodesic $\gamma$ in $\hat X$ such that $\mbox{Im}(\gamma)\subset X$ the composition $\Phi\circ \gamma$ is semi-concave.

Moreover, let $p\in X$ be a boundary point such that there exists $v\in \Sigma_pX$  normal to the boundary. Considering $p$ in $\hat X$ we know that $\Sigma_p\hat X= \widehat{\Sigma_pX}$. Hence $v,-v\in \Sigma_p\hat X$ and $\angle (v,-v)=\pi$. Therefore, $v$ and $-v$ generate a geodesic line in $T_p\hat X= C(\Sigma_p\hat X)$. Since $\Phi$ is double semi-concave, its differential  $d\Phi_p:T_p\hat X\rightarrow \R$ is concave, and by Lemma \ref{importantlemma} it follows that 
$d\Phi(-v)\geq d\Phi(v)$. On the other hand we have $d\Phi(-v)=-d\Phi(v)$. This implies the claim.
\end{proof}

We continue to work with the setup from the previous section.  Let $\Phi_i:X_i\rightarrow \R$, $i=0,1$, be semi-concave such that they agree on the boundaries $\partial X_0$ and $\partial X_1$ respectively identified via an isometry $\mathcal I$.   Let $Z$ be the glued space and let $\Phi_Z\co Z\to\R$ be the naturally constructed glueing of $\Phi_0,\Phi_1$. 
\begin{lemma}\label{lem:important}
Let $\gamma:[0,L(\gamma)]\rightarrow Z$ be a constant speed geodesic with $\gamma(0)\in X_0\backslash \partial X_0$ and $\gamma(L(\gamma))\in X_1\backslash \partial X_1$.
Suppose $\gamma$ intersects $Y$ in a single point $p=\gamma(t_0)$. Suppose further that the following conditions hold:

\begin{enumerate}[(i)]
\item $p$ is a regular point in $Z$;
\medskip
\item $
d\Phi_0|_p(v_0)+ d\Phi_1|_p(v_1)\leq 0\ \ \forall\mbox{ normal vectors }v_i\in \Sigma_pX_i,\  i=0,1;
$
\medskip
\item The restriction $\Phi|_Y$ is differentiable at $p$.
\end{enumerate}
Then $\Phi_Z\circ \gamma:[0,L(\gamma)]\rightarrow \R$ is semi-concave. In particular
\begin{align}\label{inequ:important}
-d\Phi_0(\dot \gamma^-)=\frac{d^-}{dt}\Phi_0\circ \gamma(t_0)\geq \frac{d^+}{dt}\Phi_1\circ \gamma(t_0)= d \Phi_1(\dot \gamma^+).
\end{align}
where $t\in [0,L(\gamma)]\mapsto \gamma^-(t)=\gamma(L(\gamma)-t)$ and $\gamma^+=\gamma$.
\end{lemma}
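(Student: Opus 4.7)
The plan is to establish semiconcavity of $\Phi_Z\circ\gamma$ on each of $[0,t_0]$ and $[t_0,L(\gamma)]$ separately and then glue at $t_0$ using a semi-concave variant of Lemma~\ref{importantlemma}; the gluing criterion is precisely \eqref{inequ:important}, so proving \eqref{inequ:important} is the main task. For the individual pieces: since $d_Z\le d_{X_i}$ on each $X_i$ and $\gamma|_{[0,t_0]}$ has $Z$-length $t_0=d_Z(\gamma(0),p)$, the restriction is also $d_{X_0}$-minimizing between its endpoints and hence a geodesic in $X_0$. Compactness plus the local semiconcavity of $\Phi_0$ produce a uniform $\lambda_0\in\R$ with $(\Phi_0\circ\gamma|_{[0,t_0]})''\le\lambda_0$, and similarly a $\lambda_1$ on the other piece.

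The key technical step is to show that the Alexandrov differential $f:=d\Phi_0|_p\co T_pX_0\to\R$ is in fact \emph{linear}. Regularity of $p$ in $Z$ identifies $T_pX_0$ with an upper half-space $H_+\subset\R^n$ having unique normal $N_0$ and boundary $T_pY=\R^{n-1}$. Semiconcavity of $\Phi_0$ makes $f$ continuous, $1$-homogeneous, and concave; hypothesis (iii) makes $f|_{T_pY}=d(\Phi|_Y)|_p$ linear. I claim these three properties force $f$ to be linear on all of $T_pX_0$. Restricting $f$ to the half-plane $\mathcal P_w=\{sw+tN_0:s\in\R,\,t\ge 0\}$ for any unit $w\in T_pY$ and writing $f(sw+tN_0)=tg(s/t)$ for $t>0$, the function $g$ is concave on $\R$; the boundary condition $f(sw,0)=s\,d\Phi_Y|_p(w)$ together with continuity as $t\to 0^+$ forces both asymptotic slopes $g'(+\infty)$ and $g'(-\infty)$ to coincide with the common value $d\Phi_Y|_p(w)$. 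Since $g'$ is monotone decreasing (by concavity), equality of its limits at $\pm\infty$ forces $g$ affine and therefore $f|_{\mathcal P_w}$ linear. Varying $w$ yields
\[
d\Phi_0|_p(v'+v_nN_0)=d\Phi_Y|_p(v')+v_n\,d\Phi_0|_p(N_0)
\]
on $T_pX_0$, and the analogous formula holds for $d\Phi_1|_p$ on $T_pX_1$ with $N_1=-N_0$.

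Given this linearity, \eqref{inequ:important} follows easily. Since $p$ is regular in $Z$ and $\gamma$ is a $Z$-geodesic through $p$, one has $\dot\gamma^+=-\dot\gamma^-$ in $\Sigma_pZ\cong\mathbb S^{n-1}$. Writing $\dot\gamma^-=\cos\alpha\,N_0+\sin\alpha\,w_0$ with $\alpha\in[0,\pi/2]$ and $w_0\in\Sigma_pY$, antipodality gives $\dot\gamma^+=\cos\alpha\,N_1+\sin\alpha\,(-w_0)$; summing the linearized differentials and using linearity of $d\Phi_Y|_p$ to kill the tangential contributions yields
\[
d\Phi_0|_p(\dot\gamma^-)+d\Phi_1|_p(\dot\gamma^+)=\cos\alpha\bigl(d\Phi_0|_p(N_0)+d\Phi_1|_p(N_1)\bigr)\le 0
\]
by hypothesis (ii) and $\cos\alpha\ge 0$. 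This is \eqref{inequ:important} after identifying the one-sided derivatives $\tfrac{d^-}{dt}(\Phi_Z\circ\gamma)(t_0)=-d\Phi_0|_p(\dot\gamma^-)$ and $\tfrac{d^+}{dt}(\Phi_Z\circ\gamma)(t_0)=d\Phi_1|_p(\dot\gamma^+)$. Setting $\lambda:=\max(\lambda_0,\lambda_1)$ and $v(t):=(\Phi_Z\circ\gamma)(t)-\tfrac\lambda2 t^2$, the function $v$ is continuous on $[0,L(\gamma)]$, concave on each open subinterval, and satisfies $\tfrac{d^-}{dt}v(t_0)\ge\tfrac{d^+}{dt}v(t_0)$. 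Lemma~\ref{importantlemma} with $k=0$ then upgrades $v$ to concave on all of $(0,L(\gamma))$, whence $\Phi_Z\circ\gamma$ is semiconcave.

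The main obstacle is the linearity step. A naive attempt to upper-bound $d\Phi_0|_p(\dot\gamma^-)$ using only concavity of the differential actually goes the wrong way, because for $1$-homogeneous concave functions concavity is equivalent to super-additivity and yields only a \emph{lower} bound on $d\Phi_0|_p(\dot\gamma^-)$. It is the rigidity imposed by the combination of continuity up to the boundary of the half-space and linearity of the boundary trace that collapses the concave $1$-homogeneous $d\Phi_0|_p$ to a genuine linear form and makes the cancellation with the normal-vector hypothesis (ii) possible.
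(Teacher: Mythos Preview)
Your argument is correct. Both your proof and the paper's follow the same overall scheme: reduce via Lemma~\ref{importantlemma} to the one-sided inequality at $t_0$, use regularity of $p$ to identify $T_pX_i$ with a Euclidean half-space, and then exploit concavity and $1$-homogeneity of the differentials $d\Phi_i|_p$ together with the linearity of $d(\Phi|_Y)|_p$ coming from hypothesis~(iii). The difference is in how the last step is executed. You prove the clean structural fact that each $d\Phi_i|_p$ is actually \emph{linear} on its half-space, by showing that on every $2$-plane through the normal the asymptotic slopes of the concave function $g(u)=f(uw+N_0)$ at $\pm\infty$ coincide, forcing $g$ to be affine. The paper does not isolate this; instead it restricts to the single great circle in $\Sigma_pZ\cong\mathbb S^{n-1}$ through $\dot\gamma^\pm$ and the equatorial antipodal pair $w^\pm$, sets $v(s)=d\Phi_0(\sigma_0(s))+d\Phi_1(\sigma_1(s))$ with $\sigma_1(s)=-\sigma_0(s)$, verifies $v''+v\le 0$ with $v(0)=v(\pi)=0$ (from linearity of $d\Phi_Z|_{T_pY}$) and $v(\tfrac{\pi}{2})=d\Phi_0(N)+d\Phi_1(S)\le 0$ (from~(ii)), and then argues that the $1$-homogeneous concave extension $f(r,s)=rv(s)$ on $\R^2_+$ is nonpositive. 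Your route yields a slightly stronger and reusable intermediate statement; the paper's is more ad hoc but rests on the same mechanism, since its concluding step (concave $1$-homogeneous function vanishing on the boundary line has its sign on the half-plane determined by the value at the pole) is exactly your asymptotic-slope rigidity applied to the sum $v$ rather than to each $d\Phi_i$ separately.
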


\begin{proof} By Lemma \ref{importantlemma} we only need to check  \eqref{inequ:important}. In the following we write $\gamma^{+/-}$ instead of $\dot \gamma^{+/-}$. By assumption we have that  $\gamma([0,t_0])\subset X_0$ and $\gamma([t_0,L(\gamma)])\subset X_1$. 
By assumption 
$\gamma(t_0)=:p$ is a regular point in $Z$, that is $T_pZ=\R^n$ and $\Sigma_pZ=\mathbb{S}^{n-1}$. 
Moreover $\Sigma_pZ$ is the glued of $\Sigma_p{X_0}$ and $\Sigma_p{X_1}$ along their isometric boundary and $\partial \Sigma_pX_0=\mathbb S^{n-1}_+$ and $\partial \Sigma_pX_1=\mathbb S^{n-1}_-$ (see the remarks at the end of Subsection \ref{subsec:gluing}). In this case the north pole $N$ and the south pole $S$ are the unique normal vectors in $\Sigma_pX_0$ and $\Sigma_pX_1$, respectively.

If $\gamma^-(t_0)=N\in \Sigma_pX_0$, then by symmetry $\gamma^+(t_0)=S$ and by assumption it follows
\begin{align*}
-d\Phi_0(\gamma^-)\geq 0\geq d\Phi_1(\gamma^+).
\end{align*}

If  $\gamma^-(t_0)\neq N$, then 
it 
also follows $\gamma^+(t_0)\neq S$.
There exists a geodesic loop in $\Sigma_pZ$ that contains $\gamma^-(t_0)$ and $\gamma^+(t_0)$, and intersects with 
$\partial \Sigma_p{X_0}=\Sigma_p Y=\mathbb S^{n-2}$ twice in  $w^-$ and $w^+$ such that $\angle(w^-,w^+)=\pi$.

Since $\angle(w^+,w^-)=\pi$, there exists a geodesic line in $T_pX$ of the form $s\in \R \mapsto (-s w^- )\star( s w^+)$  passing through $0$ where $\star$ denotes the concatenation of curves. 
Since we assume $\Phi_Z|_Y$ is differentiable in $p$,  $d\Phi_Z:T_pX_i\rightarrow \R$ is linear and 
%
therefore $d\Phi_Z(w^-)+ d\Phi_Z(w^+)=0$.

Let $\sigma^0$, $(\sigma^1)^{-1}: [0,\pi]\rightarrow \Sigma_p{X_0},\Sigma_p{X_1}$ be the geodesics in $\Sigma_p{X_0}$ and $\Sigma_p{X_1}$ respectively connecting $w^-$ and $w^+$ such that their concatenation is the geodesic loop $S$ in $\Sigma_p$ through $w^+, w^-, \gamma^+$ and $\gamma^-$.

Since $d\Phi_i: T_pX_i\rightarrow \R$ is concave, $T_pX_i$ is the metric cone over $\Sigma_p{X_i}$ and $d\Phi_i (rv)= r d\Phi_i(v)$, it follows that $d\Phi_i\circ \sigma_i=u^i: [0,\pi]\rightarrow \R$ satisfies 
\begin{align*}
(u^i)''+ u^i\leq 0, \ i=0,1.
\end{align*}
Moreover $v(s)= d\Phi_0 \circ \sigma_0(s)+ d\Phi_1 \circ \sigma_1(s)$ satisfies 
\begin{align*}
v'' + v\leq 0, \ \ v(0)= v(\pi)=d\Phi_0(w^-)+ d\Phi_1(w^+)= 0.
\end{align*}

Let $r,s$ be the polar coordinates on $\R^2_+=\{(x,y): y\ge 0\}$ with $x=r\cos s, y=r\sin s$. 
Then the function $f(x,y)=rv(s)$ is concave. Since $f(-1,0)=f(1,0)=0$ and $f(0,1)\le 0$ concavity of $f$ implies that $f\le 0$.
Therefore $v\le 0$ and hence  $d\Phi_0(\gamma^-)+ d\Phi_1(\gamma^+)\le 0$.

\end{proof}

\begin{theorem}
 
 Let $\Phi_i:X_i\rightarrow \R$, $i=0,1$, be semi-concave  such that for any $p\in \partial X_i$ it holds that
\begin{align*}
d\Phi_0|_p(v_0)+ d\Phi_1|_p(v_1)\leq 0\ \ \forall\mbox{ normal vectors }v_i\in \Sigma_pX_i,\  i=0,1.
\end{align*}
Then $\Phi_Z: Z\rightarrow \R$ is semiconcave. 
\end{theorem}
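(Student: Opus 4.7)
Semi-concavity is a local property, and on $Z\setminus Y$ it is immediate from semi-concavity of $\Phi_0$ and $\Phi_1$ combined with Remark \ref{rem:local}. So I would fix $p\in Y$ and look for a neighborhood on which $\Phi_Z$ is $\lambda$-concave. Using Perelman's theorem (applied to the glued Alexandrov space $Z$, which is Alexandrov by Theorem \ref{th:petruningluing}), choose a small closed geodesically convex neighborhood $U$ of $p$; after shrinking, $\Phi_0|_{X_0\cap U}$ and $\Phi_1|_{X_1\cap U}$ are both $\lambda$-concave for a common $\lambda\in\R$. The task reduces to verifying $\lambda$-concavity of $\Phi_Z\circ\gamma$ for every constant speed geodesic $\gamma\colon[0,1]\to U$.

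\textbf{Reduction to the main case.} If $\gamma([0,1])\subset X_i\cap U$ for some $i$, the conclusion is immediate. Otherwise $\gamma$ meets both $X_0\setminus Y$ and $X_1\setminus Y$; splitting $\gamma$ at an interior time $t_*$ with $\gamma(t_*)\in X_i\setminus Y$ and noting that $\Phi_Z$ agrees with $\Phi_i$ in a neighborhood of $\gamma(t_*)$ (Remark \ref{rem:local}), the gluing at $t_*$ in Lemma \ref{importantlemma} is automatic. Thus everything reduces to the \emph{main case} where $\gamma(0)\in X_0\cap U\setminus Y$ and $\gamma(1)\in X_1\cap U\setminus Y$. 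Geodesics with endpoints on $Y$ are handled at the end by approximation from inside $X_i\setminus Y$ and continuity of $\Phi_Z$.

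\textbf{Density of good geodesics, gluing, and limit.} In the main case fix $x_1:=\gamma(1)$, and let $N\subset Y$ consist of the points that are not regular in $Z$ together with the points at which $\Phi_Z|_Y$ fails to be differentiable. Boundary rectifiability of Alexandrov spaces (applied to $\partial X_0$ and $\partial X_1$) forces the $Z$-regular locus on $Y$ to be $\mathcal H^{n-1}$-full, while Lemma \ref{lemma} applied to the semi-concave $\Phi_0$ (using $\Phi_0|_{\partial X_0}=\Phi_Z|_Y$) makes the differentiability locus $\mathcal H^{n-1}$-full. Hence $\mathcal H^{n-1}(N)=0$. Proposition \ref{prop} applied to this $N$, together with Corollary \ref{cor: tan-measure 0} applied to $u:=d_Z(x_1,\cdot)$, gives that for $\mathcal H^n$-a.e.\ $x_0\in X_0\cap U$ the geodesic $\sigma_{x_0}$ from $x_0$ to $x_1$ meets $Y$ only finitely many times, each time at a $Z$-regular point where $\Phi_Z|_Y$ is differentiable. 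For such a good $\sigma_{x_0}$, Lemma \ref{lem:important} applies at each crossing and supplies the one-sided derivative inequality \eqref{inequ:important}; on each subinterval between consecutive crossings, $\Phi_Z\circ\sigma_{x_0}$ equals $\Phi_0\circ\sigma_{x_0}$ or $\Phi_1\circ\sigma_{x_0}$ and is $\lambda$-concave, so iterating Lemma \ref{importantlemma} glues everything into $\lambda$-concavity of $\Phi_Z\circ\sigma_{x_0}$ on the whole domain. Choose good $x_0^n\to\gamma(0)$ (possible because $\gamma(0)\in X_0\setminus Y$ has neighborhoods of positive $\mathcal H^n$-measure in which the set of good $x_0$ has full measure); by non-branching of the Alexandrov space $Z$ and continuity of geodesics with respect to endpoints in the convex neighborhood $U$, the geodesics $\sigma_{x_0^n}$ converge uniformly to $\gamma$, and $\lambda$-concavity of $\Phi_Z\circ\sigma_{x_0^n}$ passes to the uniform limit.

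\textbf{Main obstacle.} The essential difficulty, and the entire reason Section 3 was developed, is the density of good geodesics: \emph{a priori} nothing prevents a geodesic $\gamma$ in $Z$ from hitting $Y$ infinitely often, or tangentially, or at singular or non-differentiability points, which would obstruct both Lemma \ref{lem:important} and the iterated gluing by Lemma \ref{importantlemma}. Proposition \ref{prop} and Corollary \ref{cor: tan-measure 0} rule this out for a full-measure family of geodesics. A subsidiary technical point is the uniform convergence $\sigma_{x_0^n}\to\gamma$ across $Y$, which ultimately rests on non-branching of $Z$ and the Alexandrov-geometric continuity of geodesics within convex neighborhoods—this is the only place where it is essential that the \emph{glued} space, not just its pieces, is Alexandrov.
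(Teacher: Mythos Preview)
Your argument follows the paper's approach exactly: localize to a convex neighborhood where $\Phi_0,\Phi_1$ are $\lambda$-concave, produce via Proposition~\ref{prop} and Corollary~\ref{cor: tan-measure 0} (packaged in the paper as Corollary~\ref{cor:important}) a full-measure family of ``good'' geodesics that cross $Y$ only finitely many times, at regular points where $\Phi_Z|_Y$ is differentiable, apply Lemma~\ref{lem:important} together with Lemma~\ref{importantlemma} along each good geodesic, and pass to the limit.

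There is one technical slip in the limit step. You assert that $\sigma_{x_0^n}\to\gamma$ ``by non-branching of $Z$ and continuity of geodesics with respect to endpoints in the convex neighborhood $U$''. Non-branching alone does not give this continuity: any subsequential limit of $\sigma_{x_0^n}$ is \emph{some} geodesic from $\gamma(0)$ to $x_1=\gamma(1)$, but if that geodesic is not unique there is no reason it must equal $\gamma$. The paper fixes this by first restricting to the sub-segment $\gamma|_{[\delta,L-\delta]}$ for small $\delta>0$ and approximating both of its endpoints. Since $\gamma(\delta)$ and $\gamma(L-\delta)$ are interior points of $\gamma$, non-branching \emph{does} force the geodesic between them to be unique, so any limit of the approximating good geodesics must be $\gamma|_{[\delta,L-\delta]}$; one then lets $\delta\to 0$ at the end. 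With this adjustment your proof is complete and matches the paper's.
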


\begin{proof} 

It is obvious that we only need to check semi-concavity of $\Phi_Z$ near $Y$. By changing $Z$ to a small convex neighborhood of a point $p\in Y$ we can assume that $\Phi_i$ are $\lambda$-concave on $X_i$ for some real $\lambda$.

Let $\gamma\co [0,L]\to Z$ be a unit speed geodesic. We wish to prove that $\Phi_Z(\gamma(t))$ is $\lambda$-concave. Fix an arbitrary $0<\delta<L/10$ and let $ x_0=x_1=\gamma(\delta), x_1=\gamma(L-\delta)$.  Let $y_i\to x_0, z_i\to x_1$ be such that $y_i,z_i\notin Y$ for any $i$. Let $\gamma_i$ be a shortest unit speed geodesic from $y_i$ to $z_i$.
By  Corollary \ref{cor:important}  we can adjust $z_i$ slightly so that  that each $\gamma_i$ intersects $Y$ at most finitely many times, all intersection points are regular and $\Phi_Z|_Y$ is differentiable at those intersection points. Therefore by Lemma ~\ref{lem:important} we have that $\Phi_Z|_{\gamma_i}$ is $\lambda$-concave for every $i$. By passing to a subsequence we can assume that $\gamma_i$ converge to a shortest geodesic from $x_0$ to $x_1$ and since Alexandrov spaces are nonbranching this geodesic must be equal to $\gamma|_{[\delta,L-\delta]}$. Therefore by continuity of $\Phi_Z$ we get that $\Phi_Z$ is $\lambda$-concave on $\gamma|_{[\delta,L-\delta]}$. Since this holds for arbitrary $0<\delta<L/10$  we conclude that $\Phi_Z$ is $\lambda$-concave on all on $\gamma$.
\end{proof}

\begin{corollary}\label{cor:gluingfct}
A function $\Phi: X\rightarrow \R$ is double semi-concave if and only if it is semi-concave in the usual sense and for any $p\in \partial X$ it holds that $d\Phi(v)\leq 0\ \mbox{ for any normal vector }v\in \Sigma_p. $
\end{corollary}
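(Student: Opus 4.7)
The plan is to prove the corollary as a direct consequence of the two preceding results: Lemma \ref{gluingcondition} supplies the ``only if'' direction already, and the theorem just established (semi-concavity of $\Phi_Z$ on a glued space under the normal-vector sign condition) supplies the ``if'' direction once applied to the double construction.

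For the ``if'' direction, assume $\Phi\co X\to\R$ is semi-concave on $X$ and that $d\Phi|_p(v)\le 0$ for every $p\in\partial X$ and every normal vector $v\in\Sigma_pX$. I would set $X_0=X_1:=X$, take $\mathcal I:=\mathrm{id}_{\partial X}$, and let $\Phi_0:=\Phi_1:=\Phi$. Then $X_0\cup_{\mathcal I} X_1=\hat X$ is exactly the Alexandrov double, and the glued function $\Phi_Z$ coincides with $\Phi\circ P$ where $P\co \hat X\to X$ is the canonical projection. For any boundary point $p\in\partial X$ and any two normal vectors $v_0,v_1\in\Sigma_pX$ (both viewed inside the respective copies of $\Sigma_pX$ sitting in $\Sigma_p\hat X$), the hypothesis gives $d\Phi|_p(v_0)\le 0$ and $d\Phi|_p(v_1)\le 0$, so in particular
\begin{equation*}
d\Phi_0|_p(v_0)+d\Phi_1|_p(v_1)=d\Phi|_p(v_0)+d\Phi|_p(v_1)\le 0.
\end{equation*}
This is exactly the hypothesis of the preceding theorem, so $\Phi_Z=\Phi\circ P$ is semi-concave on $\hat X$, which by definition means $\Phi$ is double semi-concave.

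The ``only if'' direction is just a restatement of Lemma \ref{gluingcondition}: double semi-concavity of $\Phi$ already forces $\Phi$ to be semi-concave on $X$ in the usual sense and forces $d\Phi|_p(v)\le 0$ for every normal vector $v\in\Sigma_pX$ at every boundary point $p$.

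There is no real obstacle here once the preceding theorem is in hand; the only thing to notice is that in the symmetric double setup the hypothesis $d\Phi_0(v_0)+d\Phi_1(v_1)\le 0$ for all pairs of normal vectors is logically equivalent to the pointwise condition $d\Phi(v)\le 0$ for all normal vectors (the ``$\Leftarrow$'' is immediate by adding two copies of the inequality, and the ``$\Rightarrow$'' follows by taking $v_0=v_1=v$ and dividing by two). This identification is what makes the double case a special instance of the gluing theorem, and no further argument is needed.
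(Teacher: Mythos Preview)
Your proposal is correct and matches the paper's intended derivation: the corollary is stated without proof precisely because it follows immediately from Lemma~\ref{gluingcondition} (the ``only if'' direction) and the preceding theorem applied to the double $X_0=X_1=X$, $\mathcal I=\mathrm{id}_{\partial X}$, $\Phi_0=\Phi_1=\Phi$ (the ``if'' direction), exactly as you wrote.
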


 Let $\m_i=\Phi_i\mathcal{H}^n_{X_i}$ be measures on $X_0$ and $X_1$, respectively, for semi-concave function $\Phi_0$ and $\Phi_1$, and assume $\Phi_0|_{\partial X_0\equiv \partial X_1}=\Phi_1|_{\partial X_0\equiv \partial X_1}$.

Then, the metric measure glued space between the weighted Alexandrov spaces $(X_i, d_{X_i},\m_i)$, $i=0,1$, is given by 
\begin{align*}
(X_0\cup_{\mathcal I} X_1, \m_Z) \mbox{ where } \m_Z=(\iota_0)_\# \m_0 + (\iota_1)_\# \m_1.
\end{align*}
The maps $\iota_i:X_i\rightarrow Z$, $i=0,1$, are the canonical inclusion maps.
Note that $X_0\cup_{\mathcal I} X_1$ is an $n$-dimensional Alexandrov space by Petrunin's glued space theorem. By Remark \ref{rem:local} it follows that 
\begin{align*}
\left(\mathcal{H}^n_{X_0\cup_{\mathcal I}X_1}\right)|_{X_i}=\mathcal{H}^n_{X_i}, \ i=0,1
\end{align*}
we can write $\m_Z=\Phi_Z\mathcal{H}^n_{X_0\cup_{\mathcal I}X_1}$.

\section{Proof of Theorem  \ref{main}}
In this section we present the proof of the glued space theorem (Theorem \ref{main}).

\noindent
{\it Proof of Theorem \ref{main}}

{\textbf 1. } Let $X_i$, $i=0,1$, be Alexandrov spaces with curvature bounded from below by  $k_0$ and $k_1$, respectively.

By Theorem \ref{th:petruningluing} it follows that $X_0\cup_{\phi}X_1=:Z$ has curvature bounded from below by $\min\{k_0,k_1\}=:k$.

By Theorem \ref{th:petrunincd} the metric measure space $(Z,d_Z,\mathcal{H}^n_Z)$ satisfies the condition $CD(k(n-1),n)$.

Hence, any $1$-Lipschitz function $u: (Z,d_Z)\rightarrow \R$ induces a disintegration $\{\m_\gamma\}_{\gamma\in Q}$ that is strongly consistent with $R^b_u$, and for $\mathfrak q$-a.e. $\gamma\in Q$ the metric measure space $(\overline{X_\gamma},\m_\gamma)$ satisfies the condition $CD(k(n-1),n)$ and hence $CD(k(n-1),N)$ by monotinicity in $N$ . It follows that $\m_{\gamma}=h_\gamma \mathcal H^1|_{X_\gamma}$ and $h_\gamma:[a_\gamma, b_\gamma]\rightarrow \R$ satisfies 
\begin{align*}
\frac{d^2}{dr^2} h_\gamma^{\frac{1}{N-1}}+ k h_\gamma^{\frac{1}{N-1}}\leq 0 \ \mbox{ on } (a_\gamma, b_\gamma) \ \mbox{ for }\mathfrak q\mbox{-a.e.}\gamma\in Q.
\end{align*}
By Lemma \ref{importantlemma} it follows
\begin{align*}
\frac{d^-}{dr} h_{\gamma}^{\frac{1}{N-1}}\geq \frac{d^+}{dr} h_\gamma^{\frac{1}{N-1}} \ \text{ everywhere on }(a_\gamma, b_\gamma).
\end{align*}
{\bf 2.}  Fix $0<t<1$. Define the set $C_t$ as in Section~\ref{sec: Second application}. 

Recall that regular points have full measure in $Z$. Hence, there exists $\hat Q\subset Q$ with full $\mathfrak q$-measure such that $\gamma(r)$ is a regular point for any $r\in [0,L(\gamma)]$ and for every $\gamma\in \hat Q$.
Let $Q_t=\hat Q\backslash C_t$. By Lemma~\ref{lem: infinite implies tangent} and Lemma~\ref{lemma: tan-t->Ct} we know that any $\gamma\in Q_t$ it holds that $\gamma|_{(ta_\gamma,tb_\gamma)}$ intersects $Y$ in finitely many points. Further by Lemma ~\ref{lemma:tangentgeodesics} we know that $Q_t$ has full measure in $Q$.

Let $p\in X_0\backslash \partial X_0$ be arbitrary. By construction of the glued metric we can pick $\epsilon>0$ that is sufficiently small such that $d_Z|_{B_\epsilon(p)\times B_\epsilon(p)}=d_{X_0}|_{B_\epsilon(p)\times B_\epsilon(p)}$. Moreover, since $(X_0,d_{X_0})$ is an Alexandrov space there exists an open domain $U_p\subset B_{\epsilon}$ that is geodesically convex. 
There is a countable set of points $\{p_i:i\in\N\}$ such that $\bigcup_{i\in \N} U_{p_i}={ X_0\backslash Y}$. We pick $i\in \N$ and consider the corresponding $U_{p_i}$. In the following we drop the subscript $p_i$ and work with $U=U_{p_i}$.
Convexity of $U$ implies that $(\overline{U}, d_{X_0}|_{\overline{U}\times \overline{U}}, \m_{X_0}|_{\overline{U}})$ satisfies the condition $CD(K,N)$, $u|_{\overline{U}}$ is $1$-Lischitz and the set $\T_u\cap \overline{U}=\tilde \T_u$ is the transport set of $u$ restricted to $\overline{U}$.

We obtain a decomposition of $\overline{U}$ via $X_\gamma\cap \overline{U}=\tilde X_\gamma$. The subset $\mathfrak Q(U)=\tilde Q\subset Q$ of geodesics in $Q$ that intersect with ${U}$ is measurable. We can  pushforward the measure $\m|_{\overline{U}}$
 w.r.t. the quotient map $\mathfrak Q: \overline{U}\rightarrow \tilde Q$ and we obtain a measure $\tilde{\mathfrak q}$ on $\tilde Q$. 
By the $1D$-localisation procedure applied to the metric measure space $\overline{U}$, there exists a disintegration $(\tilde \m_{\tilde\gamma})_{\gamma\in \tilde Q}$ where the geodesic $\tilde \gamma$ is defined as intersection of $X_\gamma$ with $\overline{U}$.
We also set $\mbox{Im}(\tilde \gamma)=:X_{\tilde \gamma}$. Moreover, for $\tilde{\mathfrak q}$-a.e. $\tilde \gamma$ the metric measure space $(X_{\tilde\gamma}, \tilde\m_{\tilde\gamma})$ is $CD(K,N)$.
That is,  there exists a density $\tilde h_{\tilde \gamma}$ of $\tilde\m_{\tilde\gamma}$ w.r.t. $\mathcal H^1$ such that 
\begin{align}\label{inequ:another}
\frac{d^2}{dr^2} \tilde h_{\tilde \gamma}^{\frac{1}{N-1}}+ \frac{K}{N-1} \tilde h_{\tilde \gamma}^{\frac{1}{N-1}}\leq 0 \mbox{ on } (a_{\tilde\gamma},b_{\tilde\gamma})\subset (a_\gamma,b_\gamma) \ \mbox{ for }\tilde{\mathfrak q}\mbox{-a.e.}\ .
\end{align}
More precisely, there exists a set $\mathcal N\subset \tilde Q$ with $\tilde{\mathfrak q}(N)=0$ such that \eqref{inequ:another} holds for every $\tilde\gamma\in \tilde Q\backslash N$.

{\bf 3.} We show that $\tilde{\mathfrak q}$ is absolutely continuous w.r.t. $\mathfrak q$ on $Q$. 

Recall $\tilde{\mathfrak q}= (\mathfrak Q)_{\#}\m|_{U}$. Let $A\subset Q$ be a set such that $\mathfrak q(A)=0$. 
Hence $0=\m(\mathfrak Q^{-1}(A))\geq \m(\mathfrak Q^{-1}(A)\cap U)$. Hence $\tilde{\mathfrak q}(A)=0$.

Therefore, there exists a measurable function $G:Q\rightarrow [0,\infty)$ such that $ \mathfrak q= G \tilde{\mathfrak q}$ and $\int_{Q\backslash \mathfrak Q^{-1}(U)} G d\tilde{\mathfrak q}=0$. In particular, it follows that $\mathfrak q(
\mathcal N)=0$.

A unique and strongly consistent disintegration of $\m_Z|_{\mathcal T^b_u}=\Phi \mathcal H^n_Z|_{\mathcal T^b_u}$ is given by $$\int_Q \Phi \m_\gamma d\mathfrak q$$ where $\Phi \m_\gamma= (\gamma)_{\#}[\Phi\circ \gamma h_\gamma\mathcal H^1]$.
Then, it follows by uniqueness of the disintegration and since $\mathfrak q=G\mathfrak q$ that $G(\gamma)\tilde h_{\tilde\gamma}=(\Phi\circ\gamma) h_\gamma$ on $(a_{\tilde \gamma}, b_{\tilde\gamma})$ for $\tilde{\mathfrak{q}}$-a.e. $\tilde\gamma$. 

{\bf 4.} We repeat the steps {\bf 2.} and {\bf 3.} for any $U_{p_i}$, $i\in \N$. We can find a set $\mathcal N\subset Q$ with $\mathfrak q(\mathcal N)=0$ such that $\tilde{\mathfrak q}_{p_i}(\mathcal N)=0$ for every $i\in \N$ and such that \eqref{inequ:another} holds for any $\gamma \in \mathfrak Q^{-1}(U_{p_i})\backslash \mathcal N$ for any $i\in \N$.

We repeat all the previous steps again for $X_1$ instead of $X_0$ and find a correponding set $\mathcal N\subset Q$ of $\mathfrak q$-measure $0$.

We get that  for every $\gamma\in Q\backslash \mathcal N$ the inequality \eqref{inequ:another} holds for $h_\gamma$ for any interval $I\subset (a_\gamma, b_\gamma)$ as long $\gamma|_I$ is fully contained in $U_{p_i}$ for some $i\in \N$.

From Lemma \ref{importantlemma} and Lemma \ref{lem:important} it follows that inequality \eqref{inequ:another} holds for $\Phi\circ \gamma h_\gamma$ on $(ta_\gamma, tb_\gamma)$ for any $\gamma\in Q_t\backslash N$.
Since this holds for arbitrary $0<t<1$, we get that for $q$-almost all $\gamma$ in $Q$ it holds that $([a_\gamma,b_\gamma], m_\gamma)$ satisfies $CD(K,N)$.
Since this holds for an arbitrary 1-Lipschitz function $u$ we obtain that $Z$ satisfies $CD^1_{lip}(K,N)$.

If $\m_Z$ is a finite measure Theorem  \ref{thm:cavmil} yields the condition $CD(K,N)$ for $(Z,d_Z,\m_Z)$. 

If $\m_Z$ is a $\sigma$-finite measure we argue as follows. 
For $\overline{U}$ that is a geodesically convex and closed neighborhood with finite measure of some point $x\in Z$, it holds that the metric measure space $(\overline{U}, d_Z|_{\overline{U}\times \overline{U}}, \m_Z|_{\overline{U}})$ satisfies $CD^1_{lip}(K,N)$. Hence, by 
Theorem \ref{thm:cavmil} it satisfies $CD(K,N)$ and also $CD^*(K,N)$. Finally by the globalisation theorem of $CD^*$ \cite{bast} the space $(Z,d_Z,\m_Z)$ satisfies $CD^*(K,N)$. 
\qed

\begin{example}\label{example}
Here we give another simple example that shows why Theorem \ref{main} fails for the measure contraction property $MCP$. 

We consider a metric space $Z$ that is the cylinder $[0,\frac{3}{4}\epsilon]\times \mathbb S^{N-1}_{\delta}$ for $0<\delta\ll \frac{\epsilon}{8}$ with one end closed by a disk. This space has nonnegative Alexandrov curvature and equipped with the $N$-dimensional Hausdorff measure  is $CD(0,N)$ by Petrunin's theorem. It has diameter less than $\epsilon>0$. 

In  \cite{stugeo2} (Remark 5.6) it was observed that there exists a constant $c_{N+1}\in (0,1]$ such that $\forall \theta>0$ with $N\theta^2\leq c_{N+1}$ it holds
\begin{align}
t^{N}\geq \tau_{N,N+1}^{(t)}(\theta)^{N+1} \ \forall t\in (0,1).
\end{align}
Hence, provided $N\epsilon^2\leq c_{N+1}$, $Y$ will satisfy the $MCP(N,N+1)$.  

We show that if we pick $\epsilon$ sufficiently large, the double space does not satify this property. The function $\theta \mapsto \tau_{N,N+1}^{(t)}(\theta)^{N+1}$ is monotone increasing and $\tau_{N,N+1}^{(t)}(\theta)^{N+1}\rightarrow \infty$ for all $t\in (0,1)$ if $\theta\uparrow \pi$. Therefore, the set 
\begin{align*}
\Theta=\{\theta >0: t^N\geq \tau_{N,N+1}^{(t)}(\theta)^{N+1}\ \forall t\in (0,1)\}
\end{align*}
is nonempty and bounded by $\pi$ and for $\theta \in \Theta$ and $\theta'\leq \theta$ it holds $\theta'\in \Theta$.
We pick $\Theta \ni \epsilon\geq\frac{8}{9}\sup \Theta$ in the construction above. Then, by definition of $\Theta$ the space $Y$ will satisfy $MCP(N,N+1)$.
The double space of $Y$ is the cylinder $[0, \frac{3}{2}\epsilon]\times \mathbb{S}^{N-1}_\delta$ with both ends closed by a disk. Since $\frac{3}{2}\epsilon\geq \frac{4}{3}\sup \Theta$, it follows $t^N< \tau_{N,N+1}^{(t)}(\frac{5}{4}\epsilon)^{N+1}$ for some $t\in (0,1)$.

On the other hand,  since $[0,\frac{3}{2}]\times \mathbb{S}^{N-1}_\delta$ is flat, one can find an optimal transport $\mu_t$ such that $\mu_1=\delta_{x_1}$, $\mu_0=\mathcal H^N(A)\mathcal H^N|_A$, $d(x_1,A)\geq \frac{5}{4}\epsilon$ and $\mu_t=t^{N}\mathcal H^N(A)\mathcal H^N|_A$. 
If the $MCP(N,N+1)$ holds, then $\mu_t\geq \tau_{N, N+1}^{(t)}(\frac{5}{4}\epsilon)^{N+1}\mathcal H^N(A)\mathcal H^N|_{A}$.

Together with the previous inequality we see that the $MCP(N,N+1)$ cannot be satisfied.
\end{example}
\bibliography{new}
\bibliographystyle{amsalpha}}

\end{document}